\theoremstyle{plain}
\newtheorem{thm}{Theorem}[section]
\newtheorem{prop}[thm]{Proposition}
\newtheorem{lem}[thm]{Lemma}
\newtheorem{cor}[thm]{Corollary}
\theoremstyle{definition}
\newtheorem{df}[thm]{Definition}
\newtheorem{notn}[thm]{Notation}
\newtheorem*{notn*}{Notation}
\newtheorem{remark}[thm]{Remark}
\newtheorem{example}[thm]{Example}
\newtheorem{observation}[thm]{Observations}
\newtheorem{alg}{Algorithm}
\newenvironment{customthm}[1]
  {\innercustomthm}
  {\endinnercustomthm}
\newcommand{\Z}{\mathbb{Z}}
\newcommand{\Q}{\mathbb{Q}}
\newcommand{\F}{\mathbb{F}}
\newcommand{\End}{\mathrm{End}}
\newcommand{\Hom}{\mathrm{Hom}}
\newcommand{\Cl}{\mathrm{Cl}}
\newcommand{\OO}{\mathcal{O}}
\newcommand{\val}{\mathrm{val}}
\newcommand{\frp}{\mathfrak{p}}
\newcommand{\frP}{\mathfrak{P}}
\newcommand{\frf}{\mathfrak{f}}
\newcommand{\frl}{\mathfrak{l}}
\newcommand{\cW}{\mathcal{W}} 
\title{Abelian varieties over finite fields with commutative endomorphism algebra: theory and algorithms} 
\author{Jonas Bergstr\"om, Valentijn Karemaker, and Stefano Marseglia}
\date{}
\begin{document}
 
\maketitle
  
\begin{abstract}
We give a categorical description of all abelian varieties with commutative endomorphism ring over a finite field with $q=p^a$ elements in a fixed isogeny class in terms of pairs consisting of a fractional $\mathbb Z[\pi,q/\pi]$-ideal and a fractional $W\otimes_{\mathbb Z_p} \mathbb Z_p[\pi,q/\pi]$-ideal, with $\pi$ the Frobenius endomorphism and $W$ the ring of integers in an unramified extension of $\mathbb Q_p$ of degree $a$. 
The latter ideal should be compatible at $p$ with the former and stable under the action of a semilinear Frobenius (and Verschiebung) operator; it will be the Dieudonn\'e module of the corresponding abelian variety. 
Using this categorical description we create effective algorithms to compute isomorphism classes of these objects and we produce many new examples exhibiting exotic patterns. 
\end{abstract}

\section{Introduction}
In order to classify abelian varieties over a finite field up to isomorphism one needs a way to represent them. This is a notoriously hard problem. 
Indeed, representing abelian varieties over any field with equations becomes impractical already in dimension two.  
As a first step, one can consider the classification and representation problem up to isogeny. Over finite fields, this problem has been completely solved by Honda-Tate theory in terms of the characteristic polynomial of the Frobenius endomorphism of the abelian variety, see~\cite{Tate66,Honda68}.
The isogeny classes can also be computed effectively, see e.g.~\cite{DupuyKedlayaRoeVincent2020}. 
In this article we consider the representation and classification problem up to isomorphism for any fixed isogeny class of abelian varieties over finite fields with commutative endomorphism algebra.

To state our results we now introduce some notation. Fix a prime power $q=p^a$ and let~$\pi$ be the Frobenius endomorphism of an abelian variety $X$ with commutative endomorphism algebra, defined over the finite field $\F_q$.  
Note that the condition of the endomorphism algebra being commutative is equivalent to the condition that the characteristic polynomial of the Frobenius endomorphism is square-free. 
Let $\mathcal{A}_{\pi}$ be the category of abelian varieties isogenous to $X$. Morphisms in this category will be homomorphisms defined over $\F_q$.

\subsubsection*{Main contribution}
Put $R=\Z[\pi,q/\pi] \subseteq E=\Q[\pi]$. 
For any rational prime $\ell$, including $\ell=p$, let $E_\ell = E\otimes_\Q \Q_\ell$ and  $R_\ell = R\otimes_\Z \Z_\ell$, and let $i_{\ell}$ be the injection $E \to E_{\ell}$.
Let $L$ be an unramified extension of $\Q_p$ of degree $a$, with $W$ its maximal $\Z_p$-order, and put $W_R=W \otimes_{\Z_p} R_p \subseteq A=L\otimes_{\Q_p}E_p$. 
The algebra $A$ comes with an action of $\sigma$, the Frobenius map of~$L$ over $\Q_p$, and an embedding $\Delta\colon E_p \to A$. For more details on these definitions see Section~\ref{ssec:setupXp}. 
Fix an additive map $F\colon A\to A$ such that $F \lambda=\lambda^{\sigma}F$ for all $\lambda \in L$ and such that $F^a$ coincides with the multiplication by $\Delta(\pi)$.
Put $V=pF^{-1}$. 
With a $W_R\{F,V\}$-ideal we will mean a fractional $W_R$-ideal in $A$ which is stable under the action of $F$ and $V$. 

Now let $\mathcal C_{\pi}$ be the category whose objects are pairs $(I,M)$, where $I$ is a fractional $R$-ideal in $E$ and $M$ is a $W_R\{F,V\}$-ideal such that $\Delta^{-1}(M)=i_p(I)R_p$. 
The homomorphisms between objects $(I,M)$ and $(J,N)$ in $\mathcal C_{\pi}$ are the elements $\alpha \in E$ such that $\alpha I \subseteq J$ and $\Delta(i_p(\alpha)) M \subseteq N$. 

Our first main result can now be stated as follows.

\begin{customthm}{5.2} There is an equivalence of categories $\Psi\colon\mathcal C_{\pi} \to \mathcal A_{\pi}$.
\end{customthm}

This result is closely related to \cite[Theorem 2.1]{xue2017counting}.
The idea goes back (at least) to Waterhouse \cite[Theorem 5.1]{Wat69} and is based on Tate's theorem (see \cite[Main theorem]{Tate66}). 
More precisely, the functor $\Psi$ sends an object $(I,M) \in \mathcal{C}_{\pi}$ to an abelian variety $X_{(I,M)} \in \mathcal{A}_{\pi}$, which admits an isogeny $\varphi: X_{(I,M)} \to X$ for a fixed $X \in \mathcal{A}_{\pi}$. Any isogeny is determined by its (finite) kernel, and the kernel of~$\varphi$ is the subgroup whose $\ell$-primary part for $\ell \neq p$ is (a suitable multiple of) $i_{\ell}(I) \subseteq T_{\ell}(X)$, where $T_{\ell}(X)$ denotes the $\ell$-adic Tate module of $X$, and whose $p$-primary part is (the same multiple of)~$M \subseteq M(X)$, where $M(X)$ denotes the Dieudonn{\'e} module of $X$; existence of this~$\varphi$ is guaranteed by Tate's theorem. 
In other words, in a pair $(I,M)$, the ideal $M$ determines the Dieudonn\'e module (which is equivalent to the $p$-divisible group) of the abelian variety. The ideal $I$ encodes the Tate modules of the abelian variety for all $\ell \neq p$, partial information about the Dieudonn\'e module, as well as ``global information'' which is determined by an element of the class group of the endomorphism ring of the abelian variety. Indeed, the endomorphism ring of the corresponding abelian variety is $\Delta^{-1}(\{x\in A : xM\subseteq M\})$ locally at $p$ and $\{x\in E : xI\subseteq I\}$ locally at every other prime.

We believe that a statement analogous to Theorem 5.2 should also hold for isogeny classes with non-commutative endomorphism algebra. But, we expect that in this more general context it would be harder to effectively compute representations of the $p$-Frobenius and Verschiebung. 
Effectiveness of the results being one of the main goals of this work, we decided to restrict our investigation to the commutative case, and leave the general one for future work.

\subsubsection*{Effectiveness of our result} 
The equivalence provided in Theorem 5.2 holds for all square-free isogeny classes $\mathcal{A}_{\pi}$. For each $\mathcal{A}_{\pi}$, the target category $\mathcal{C}_{\pi}$ consists of modules over commutative rings, which are amenable to concrete computations.
So using the equivalence, the second main result of this article is that we have created effective algorithms to compute isomorphism classes of abelian varieties over a finite field with commutative endomorphism algebra. 

Our most important contribution here is Algorithm~\ref{alg:FVclasses}: it computes isomorphism classes of Dieudonn\'e modules even when the endomorphism ring of the local-local part is not maximal. 
To our knowledge, this is the first algorithm that can compute isomorphism classes of Dieudonn\'e modules with non-maximal endomorphism ring.
To do so, we leverage the fact that the local-local parts of Dieudonn\'e modules with maximal endomorphism ring have been classified by Waterhouse, see \cite[Theorem 5.1]{Wat69}. 
Interestingly, in all isogeny classes of abelian varieties whose isomorphism classes we have computed, which are not defined over a prime field, nor ordinary, nor almost ordinary, it turns out that there are examples of isomorphism classes of abelian varieties with Dieudonn\'e modules whose local-local part has a non-maximal endomorphism ring, see Observation~\ref{obs}. 
The implementation of the algorithms in Magma \cite{Magma} is available at \cite{IsomClAbVarFqCommEndAlg}.

The examples we compute in Section~\ref{sec:examples} show that the endomorphism rings that appear in a given isogeny class of abelian varieties (which are not defined over a prime field, nor ordinary, nor almost ordinary) behave quite wildly. 
To make this concrete, let $\mathcal{S}$ be the set of overorders of $R = \mathbb{Z}[\pi,q/\pi]$ and let $\mathcal{E}$ be the subset of $\mathcal{S}$ consisting of orders $T$ such that there exists an abelian variety $X \in \mathcal{A}_\pi$ with $\End(X)=T$, and let $\OO_E$ be the maximal order of $E=\Q[\pi]$. 
Consider the following three statements which are true in the ordinary and almost-ordinary cases (meaning of $p$-rank $g$ and $g-1$, respectively), as well as over a prime field (meaning of cardinality $q=p$):
\begin{enumerate}
    \item For every $S\in \mathcal{E}$ and $T\in \mathcal{S}$, if $S\subseteq T$ then $T \in \mathcal{E}$.
    \item The order $S = \cap_{T \in \mathcal{E}} T$ is in $\mathcal{E}$.
    \item For every $S$ in $\mathcal{E}$, $n(\OO_E)$ divides $n(S)$, where $n(S)$ (resp.~$n(\OO_E)$) is the number of isomorphism classes of abelian varieties in $\mathcal{A}_\pi$ with endomorphism ring $S$ (resp.~$\OO_E$).
\end{enumerate}
In Section~\ref{sec:examples}, among other things, we give examples that violate all three of these statements.
Our explicit knowledge of the Dieudonn{\'e} modules also allows us to compute the $a$-numbers of the abelian varieties and study their distribution in the isogeny class.

\subsubsection*{Comparison with previous results}
There are several equivalence of categories in the literature that are similar to ours, some of which consider very large subcategories of the category of abelian varieties over a finite field $\F_q$.

Recall that each Dieudonn\'e module of an abelian variety over a finite field splits into three parts: the \'etale part, its dual the multiplicative part, and the local-local part.
The difficulty in realizing categorical equivalences can be measured by the exponent $a$ of $q=p^a$ and the complexity of the local-local part.
The $p$-rank of an abelian variety measures how large each of these parts is.
For example, the $p$-rank is maximal (that is, it equals the dimension $g$ of the variety) if the local-local part is trivial.

In \cite{Del69}, Deligne uses canonical liftings to represent ordinary abelian varieties over a finite field in terms of finitely generated modules with a Frobenius-like endomorphism.
So in this case we have a close relation between abelian varieties over finite fields and CM-abelian varieties over the complex numbers (the canonical liftings). 

This is extended by Centeleghe and Stix in \cite{CentelegheStix15}, in line with the techniques of Waterhouse rather than using canonical liftings, to represent all abelian varieties over a finite prime field 
, again, in terms of finitely generated modules with a Frobenius-like endomorphism.
The change of technique is connected to the fact that canonical liftings do not exist in general if the abelian variety is not ordinary nor almost ordinary.

In our notation, the target of the Deligne and Centeleghe-Stix equivalences restricted to an isogeny class with commutative endomorphism ring determined by $\pi$ is the category of fractional $R$-ideals. 
Indeed, in the case of ordinary abelian varieties as well as the case of abelian varieties over prime fields, the Dieudonn\'e module plays no special role and behaves in the same way as the Tate modules for $\ell \neq p$.  In Proposition~\ref{prop-ordinary-ideals} we show analogous statements for the \'etale-local and local-\'etale parts of the Dieudonn\'e module, but the local-local part will behave quite differently as soon as we leave these realms.

An almost-ordinary abelian variety has a Dieudonn\'e module with nonzero local-local part, whose endomorphism ring is maximal, see \cite[Proposition 2.1]{OswalShankar19}. If the place of $E_p$ of slope $1/2$ (called the supersingular part) in an isogeny class of almost-ordinary abelian varieties is unramified, then there are two isomorphism classes of Dieudonn\'e modules; in the ramified case there is one. In the ramified case, Oswal and Shankar in \cite{OswalShankar19} use a canonical lifting to give a categorical equivalence between almost-ordinary abelian varieties and fractional $S$-ideals, where $S$ is the minimal overorder of $R$ that has maximal endomorphism ring at the supersingular part. In the unramified case, they use canonical liftings of different CM-types to distinguish between the two isomorphism classes of Dieudonn\'e modules; this gives rise to two disjoint equivalences of such abelian varieties with fractional $S$-ideals.

In \cite{BhatnagarFu}, Bhatnagar and Fu use the techniques of \cite{OswalShankar19} to give a similar description for abelian varieties over finite fields with maximal real multiplication and such that $p$ is split in the ring of integers of the maximal real extension.
As in the almost-ordinary case, the endomorphism ring of the local-local part of the Dieudonn\'e module of these abelian varieties is necessarily maximal, and \cite[Theorem 1.3]{BhatnagarFu} can be compared with \cite[Theorem 5.3]{Wat69}.  

Theorem~\ref{thm-equivalence} subsumes all of the above equivalences for the isogeny classes with commutative endomorphism algebra. 
In the almost-ordinary case, Theorem~\ref{thm-equivalence} also gives a description of morphisms between abelian varieties with non-isomorphic supersingular parts, which is not present in \cite{OswalShankar19}. Moreover, Theorem~\ref{thm-equivalence} also works for almost-ordinary abelian varieties in characteristic~$2$, while the results in \cite{OswalShankar19} do not. 

In \cite{CentelegheStix23}, Centeleghe and Stix give a categorical description of all abelian varieties over a fixed finite field $\F_q$.
This is achieved by gluing together infinitely many functors parametrized by sets $w$ of $q$-Weil numbers whose respective target is the category of $S_w$-modules that are free and finite rank over $\Z$, where $S_w$ is the endomorphism ring of a carefully chosen (``$w$-balanced'') abelian variety~$X_w$, cf.~\cite[Theorem 1.1]{CentelegheStix23}.
This description, while completely general, is less amenable to be turned into an effective algorithm -- which is one of the main aims of this article -- even when we restrict to an isogeny class with commutative endomorphism algebra.
One immediate issue is that, as soon as $X_w$ is not ordinary nor defined over a prime field, see \cite[Proposition~8.7]{CentelegheStix23}, the non-commutative ring $S_w$ is given as a fiber product of certain matrix algebras, see \cite[Theorem~4.9]{CentelegheStix23}, on which explicit representations of the Frobenius map are hard to compute.

Finally, let us mention that there are other categorical descriptions of abelian varieties in the literature, such as \cite{JKPRSBT18}, \cite{Kani11}, \cite{KNRR} and \cite{chiafu2012}. 

\subsubsection*{Comparison with previous algorithms}
Building on the previously cited literature, there are several algorithms to compute isomorphism classes of abelian varieties in a given isogeny class.
We focus on the case of isogeny classes of abelian varieties of dimension $>1$.
In \cite{MarAbVar18}, the third author designs an algorithm that uses Deligne's and Centeleghe-Stix' results to compute the isomorphism classes in an isogeny class, when this is ordinary or defined over a prime field, and with commutative endomorphism algebra.
As written above, this boils down to computing all isomorphism classes of fractional $R$-ideals.
In \cite{MarICM18}, the third author designs an algorithm to compute these isomorphism classes, which in fact form a commutative monoid under the operation induced by ideal multiplication, called the ideal class monoid of $R$.
Each ideal class, say with multiplicator ring $S$, is uniquely determined by a unique element of the class group $\Cl(S)$ of $S$ and by a unique local isomorphism class, also known as a weak equivalence class.
The computation of $\Cl(S)$ is efficiently reduced to the computation of $\Cl(\OO_K)$ which is classical, see \cite{klupau05}.
The third author has designed algorithms to compute the local isomorphism classes, starting in \cite{MarICM18}, with improvements in \cite{MarsegliaType22}, and finally culminating with the algorithm \texttt{ComputeW} contained in \cite{Mar23_Loc}, which is the current state-of-the-art.
The algorithm \texttt{ComputeW} plays an important role in this paper, as we will describe in more detail in the outline of the paper.

Algorithms to compute isomorphism classes of abelian varieties that do not have commutative endomorphism algebras, again for isogeny classes which are either ordinary or over a prime field, are also due to the third author and can be found in \cite{MarBassPow} and \cite{MarModules25}. 
Finally, there are algorithms to compute isomorphism classes of polarized abelian varieties in the ordinary case, see \cite{MarAbVar18,KNRR}, and over a prime field, see \cite{BKM}.

\subsubsection*{Outline of the paper}
In Section~\ref{sec-abvars} we describe the isomorphism classes of abelian varieties over finite fields in a fixed isogeny class in terms of isomorphism classes of Tate and Dieudonn\'e modules together with the class groups of the endomorphism ring they determine. 

From here on, we only consider abelian varieties with a commutative endomorphism ring.
The Tate modules of our abelian varieties can then be viewed as fractional $R_{\ell}$-ideals for rational primes $\ell \neq p$. 
In Section~\ref{sec:tate}, we show how these local objects can be described in terms of fractional ideals of some overorders of $R$, which are global objects, see Theorem~\ref{thm:W_R_X_ell}.

Indeed, we want to highlight that our algorithms are all designed to operate with global objects, that is, with $\Z$-lattices, as opposed to $\Z_\ell$-lattices, where $\ell$ is a rational prime.
This feature, which comes at the cost of some technical lemmas and heavier notation, allows us to obtain \emph{exact} algorithms, that is, algorithms which are not affected by the precision issues that arise from working with local lattices.
There are well-established algorithms to compute isomorphism classes of these types of global ideals, as the aforementioned~\cite[\texttt{ComputeW}]{Mar23_Loc}.

Inspired by Waterhouse \cite[Section 5]{Wat69}, who was working under the additional assumption that $R$ is the maximal order, we show in Subsections~\ref{ssec:setupXp} and~\ref{sec:Xpip} how to realize the Dieudonn\'e modules of our abelian varieties as fractional $W_R \{F,V\}$-ideals. 
In Subsection~\ref{sec:isoWFV} we then show a series of results about the structure of $W_R \{F,V\}$-ideals corresponding to the connected-\'etale sequence of Dieudonn\'e modules. In particular, there is an \'etale part $R^{\{0\}}_p$ (resp.~multiplicative part $R^{\{1\}}_p$) of $R$, and we show in Proposition~\ref{prop-ordinary-ideals} 
that the \'etale (resp.~multiplicative) part of $W_R \{F,V\}$-ideals can be described as fractional $R^{\{0\}}_p$-ideals (respectively $R^{\{1\}}_p$-ideals). 

The descriptions of Tate modules and Dieudonn\'e modules in Sections~\ref{sec-abvars}--\ref{sec:dieudonne} are put together in Section~\ref{sec:equiv} to give a categorical equivalence between abelian varieties in our fixed isogeny class and pairs of a fractional $R$-ideal and a fractional $W_R \{F,V\}$-ideal, see Theorem~\ref{thm-equivalence} (which was also stated above). In Proposition~\ref{prop:extension_are_end} and Corollary~\ref{cor:min_end_ell_01}, we show some consequences of this equivalence for the possible endomorphism rings of our abelian varieties. 

In Section~\ref{sec:aftergreenredblue} we show how to create algorithms to compute isomorphism classes of $W_R \{F,V\}$-ideals.
Since Subsection~\ref{seq:etalemult} deals with the {\'e}tale and multiplicative parts of the Dieudonn{\'e} modules, we focus our attention on the local-local part.
Our approach can be summarized in four steps, of which especially the third and fourth steps contain entirely new ideas which, to our knowledge, have never been implemented before.
To avoid precision issues, in the first step we globalize our objects, considering fractional ideals in \'etale $\Q$-algebras rather than in \'etale $\Q_p$-algebras. 
In the second step, we again use the  
procedure described in Section~\ref{sec:tate}, to compute global representatives of (the local-local parts of) of fractional $W_R$-ideals modulo $W_R$-linear isomorphisms, essentially relying on \cite[\texttt{ComputeW}]{Mar23_Loc}.
The starting point for the third and fourth steps is the combinatorial classification of the isomorphism classes of $W_R \{F,V\}$-ideals with maximal endomorphism ring $\OO_A$ given by Waterhouse in \cite[Theorem~5.1]{Wat69}, which can be readily adapted to compute only the local-local parts.
The third step then consists in studying the fibers of the extension map from (the local-local part of) $W_R$-ideals to ideals of the maximal order, to understand how each $W_R$-linear isomorphism class that we computed in the second step is partitioned into classes under the action of (the local-local parts of) $\Delta(E_p^\times)$, giving us what we call the $\Delta'$-isomorphism classes.
This is achieved by computing a global representative of certain finite quotients of the unit group of (the local-local part of) $\OO_A$ together with the induced action of an approximation of the (local) Frobenius $\sigma\colon L \to L$.
Since these quotients are finite, after a careful analysis to determine the required precision in the approximation, we are guaranteed to obtain the correct output; see Algorithm~\ref{alg:Deltaclasses_OO}.
Finally, in the fourth step we determine which of the $\Delta'$-isomorphism classes that are stable under $F$ and $V$. To do this we need to compute suitable approximations of $F$ and $V$, that is, representatives on finite quotients, since they do not admit global representatives.
The fourth step results in Algorithm~\ref{alg:FVclasses}, which is our most important algorithmic contribution.

To upshot of this four-step approach is that the representative we compute for each isomorphism class contains complete information about the Tate modules of the corresponding abelian variety for each $\ell \neq p$. 
Moreover, we get an approximation of $F$ and $V$ acting on the Dieudonn\'e module, modulo some power of $p$, which is sufficiently large to detect stability. This approximation can in principle be made arbitrarily accurate.

Finally, in Section~\ref{sec:examples}, we give examples of computations of isomorphism classes of abelian varieties using our algorithms. 
In particular, we study the behavior of the endomorphism rings that appear for $g$-dimensional abelian varieties of $p$-rank $<g-1$, which shows patterns very different from abelian varieties with higher $p$-rank (that is, ordinary and almost-ordinary abelian varieties, as referred to above), see Examples~\ref{ex:simple1}, \ref{ex:nonsimple1}, \ref{ex:simple2}, \ref{ex:simple3}, \ref{ex:nonsimple2} and \ref{ex:end_max_p}. 

\tableofcontents

\section{Tate and Dieudonn\'e modules}\label{sec-abvars}
Let $X$ be an abelian variety over a finite field $\mathbb{F}_q$ of characteristic~$p$. 
In this section we discuss a general description of the isogeny class of~$X$ over~$\mathbb{F}_q$, as also contained in~\cite{xue2017counting} (cf.~\cite{chiafu2010}). 
Since we will focus on abelian varieties with commutative endomorphism algebra in this article, we will restrict to this case at the end of this section.

By Poincar{\'e} reducibility, the abelian variety $X$ admits an isogeny decomposition $X \sim X_1^{k_1} \times \ldots \times X_r^{k_r}$ over $\F_q$ into $\F_q$-simple abelian varieties $X_i$ with respective multiplicities $k_i$. 
By Honda-Tate theory, the isogeny class of $X$ is determined by the characteristic polynomial~$h(x)$ of its Frobenius endomorphism. The isogeny decomposition of $X$ implies that $h(x) = h_1(x)^{k_1} \cdot \ldots \cdot h_r(x)^{k_r} \in \mathbb{Z}[x]$, where each $h_i(x)$ is the characteristic polynomial of the Frobenius endomorphism $\pi_i$ of $X_i$. 
Write $m(x) = m_1(x) \cdot \ldots \cdot m_r(x)$ for the minimal polynomial of $\pi$ and let $R = \mathbb{Z}[\pi,q/\pi]$ denote the $\mathbb{Z}$-order in the {\'e}tale algebra $E=\mathbb{Q}[x]/(m(x))$ with $\pi= x \bmod m(x)$. 

Let $\underline{k}=(k_1,\ldots,k_r)$.
We denote by $\mathcal{A}_{\pi,\underline{k}}$ the category whose objects are the abelian varieties $\F_q$-isogenous to $X$ together with $\F_q$-homomorphisms.
We write $\mathcal A_{\pi,\underline{k}}^{\mathrm{isom}}$ for the set of $\mathbb{F}_q$-isomorphism classes in $\mathcal{A}_{\pi,\underline{k}}$.
If $\underline{k}=(1,\ldots,1)$, then the subscript $\underline{k}$ will be omitted from the notation.

All homomorphisms will be assumed to be defined over $\mathbb{F}_q$.
Recall that the endomorphism ring $\End(X)$ is an order in the endomorphism algebra
\begin{equation}\label{eq:End0Asplit}
\mathrm{End}^0(X) = \mathrm{End}(X) \otimes_{\mathbb{Z}} \mathbb{Q} \simeq \mathrm{Mat}_{k_1}(\mathrm{End}^0(X_1)) \times \ldots \times \mathrm{Mat}_{k_r}(\mathrm{End}^0(X_r)).
\end{equation}
Consider the group scheme $G_X$ over $\mathbb{Z}$ such that 
\begin{equation}\label{eq:GA}
G_X(R_0) = (\mathrm{End}(X) \otimes_{\mathbb{Z}} R_0)^{\times}
\end{equation}
for any commutative ring $R_0$ with unit.

For a rational prime $\ell\neq p$, let $T_{\ell}(X)$ be the $\ell$-adic Tate module of $X$ and let $V_\ell(X) = T_\ell(X)\otimes_{\Z_\ell} \Q_\ell$.
Let $M(X)$ be the covariant Dieudonn\'e module of $X$ (which is categorically equivalent to its $p$-divisible group). 
Put $R_{\ell} = R \otimes_{\mathbb{Z}} \mathbb{Z}_{\ell}$ for any prime $\ell$, including $\ell = p$.
\begin{df}\label{def:Xpitilde}
 Out of the Tate and Dieudonn{\'e} modules of $X$, we build 
\begin{align*}
    \widetilde{\mathfrak{X}}_{\pi,\underline{k},\ell} & = \{ \text{left $R_\ell$-lattices in }V_\ell(X) \text{ of full rank} \} \text{ for all $\ell \neq p$};\\
    \sideset{}{'}\prod_{\ell \neq p} \widetilde{\mathfrak{X}}_{\pi,\underline{k},\ell} &= \{ (T_{\ell})_{\ell \neq p} \in \prod_{\ell \neq p} \widetilde{\mathfrak{X}}_{\pi,\underline{k},\ell} \text{ : } T_{\ell} = T_{\ell}(X) \text{ for all but finitely many } \ell \neq p \};\\
    \widetilde{\mathfrak{X}}_{\pi,\underline{k},p} & = \{ \text{Dieudonn\'e submodules in $M(X)\otimes_{\Z_p} \Q_p$ of full rank} \};\\
    \widetilde{\mathfrak{X}}_{\pi,\underline{k}} & = \widetilde{\mathfrak{X}}_{\pi,\underline{k},p} \times \sideset{}{'}\prod_{\ell\neq p}\widetilde{\mathfrak{X}}_{\pi,\underline{k},\ell}.
\end{align*}
As before, if $\underline{k}=(1,\ldots,1)$ then it will be omitted from the notation.
\end{df}

By Tate's theorem, $\widetilde{\mathfrak{X}}_{\pi,\underline{k}}$ is in bijection with 
\begin{equation}\label{eq:Atilde}
\widetilde{\mathcal{A}}_{\pi,\underline{k}} = \{ \text{ quasi-isogenies } \varphi\colon X' \to X, \text{ up to isomorphism } \}
\end{equation}
by mapping $\left( \varphi\colon X' \to X \right) \mapsto \left(\varphi_*(M(X')), (\varphi_*(T_{\ell}(X')))_{\ell \neq p}\right)$; a quasi-isogeny $\varphi\colon X' \to X$ is an element of $\mathrm{Hom}(X',X) \otimes_{\mathbb{Z}} \mathbb{Q}$ such that there exists an integer $N$ for which $N\varphi$ is an isogeny. 
This map $\widetilde{\mathcal{A}}_{\pi,\underline{k}} \to \widetilde{\mathfrak{X}}_{\pi,\underline{k}}$ is $G_X(\mathbb{A}_f)$-equivariant by construction, where $\mathbb{A}_f$ denotes the finite adele ring of~$\mathbb{Q}$.

Further, we may project $\widetilde{\mathcal{A}}_{\pi,\underline{k}} \twoheadrightarrow \mathcal{A}_{\pi,\underline{k}}^{\mathrm{isom}}$ by mapping $\left( \varphi\colon X' \to X \right)$ to the isomorphism class of $X'$.
Since two quasi-isogenies $\varphi_1\colon X' \to X$ and $\varphi_2\colon X'' \to X$ are considered isomorphic if there is an isomorphism $\alpha\colon X' \to X''$ such that $\varphi_2 \circ \alpha = \varphi_1$, this surjection can be viewed as taking the $G_X(\mathbb{Q})$-orbits of $\widetilde{\mathcal{A}}_{\pi,\underline{k}}$. 

\begin{df} \label{def:Xpi} Starting with the objects in~Definition~\ref{def:Xpitilde}, we let 
\begin{align*}
    \mathfrak{X}_{\pi,\underline{k},\ell} & = \widetilde{\mathfrak{X}}_{\pi,\underline{k}, \ell} \text{ up to isomorphism of $R_{\ell}$-modules};\\
    \mathfrak{X}_{\pi,\underline{k},p} & = \widetilde{\mathfrak{X}}_{\pi,\underline{k},p} \text{ up to isomorphism of Dieudonn{\'e} modules};\\
    \mathfrak{X}_{\pi,\underline{k}} & = \mathfrak{X}_{\pi,\underline{k},p} \times \prod_{\ell\neq p}\mathfrak{X}_{\pi,\underline{k},\ell}.
\end{align*}
\end{df}
Thus we obtain a projection map $\widetilde{\mathfrak{X}}_{\pi,\underline{k}} \to \mathfrak{X}_{\pi,\underline{k}}$.
Consider the fiber $\widetilde{\mathcal{A}}_{\pi,\underline{k},Y}$ in $\widetilde{\mathcal{A}}_{\pi,\underline{k}} \simeq \widetilde{\mathfrak{X}}_{\pi,\underline{k}}$ above an element $Y = (M/\simeq, (T_{\ell})_{\ell \neq p}/\simeq)$ in $\mathfrak{X}_{\pi,\underline{k}}$. 
By definition, it consists of all isomorphism classes of quasi-isogenies $\varphi\colon X' \to X$ such that the Dieudonn{\'e} and Tate modules of the source abelian varieties $X'$ satisfy $M(X')\simeq M$ and $T_\ell(X')\simeq T_\ell$ for each $\ell\neq p$.

The group $G_X(\mathbb{A}_f)$ acts transitively on the set of quasi-isogenies on each fiber $\widetilde{\mathcal{A}}_{\pi,\underline{k},Y}$.
The stabilizer of any given $\varphi\colon X' \to X$ in $\widetilde{\mathcal{A}}_{\pi,\underline{k},Y}$
under this action is the open compact subgroup
\begin{equation}\label{eq:UX}
U_Y = \mathrm{Aut}(M) \times \prod_{\ell \neq p} \mathrm{Aut}_{R_\ell}(T_{\ell}) 
\end{equation}
of $G_X(\mathbb{A}_f)$. Hence, the fiber can be described as
\begin{equation}\label{eq:AAXtilde}
\widetilde{\mathcal{A}}_{\pi,\underline{k},Y} \simeq G_X(\mathbb{A}_f)/U_Y.
\end{equation}

For all $X' \in \mathcal{A}_{\pi,\underline{k}}$, we have that $\End(X)$ and $\End(X')$ are locally equal at all but finitely many rational primes.
It follows that $G_{X'}(\Q) \simeq G_X(\Q)$ and $G_{X'}(\mathbb{A}_f) \simeq G_X(\mathbb{A}_f)$ for all $X'  \in \mathcal{A}_{\pi,\underline{k}}$, where $G_{X'}(R_0) = (\mathrm{End}(X') \otimes_{\Z} R_0)^{\times}$, cf.~\eqref{eq:GA}.
In particular, we have
\[ G_X(\Q) \simeq G_{X_1^{k_1}}(\Q) \times \ldots \times G_{X_r^{k_r}}(\Q) \]
and
\[ G_X(\mathbb{A}_f) \simeq G_{X_1^{k_1}}(\mathbb{A}_f) \times \ldots \times G_{X_r^{k_r}}(\mathbb{A}_f). \]
By contrast, in general, the same splitting does not apply to $U_Y$, since it is an integral object.
 
Finally, and similarly to $\widetilde{\mathcal{A}}_{\pi,\underline{k}} \to \widetilde{\mathfrak{X}}_{\pi,\underline{k}}$, we obtain a surjective map $\Phi: \mathcal{A}_{\pi,\underline{k}}^{\mathrm{isom}} \twoheadrightarrow \mathfrak{X}_{\pi,\underline{k}}$ by associating to an abelian variety the isomorphism classes of its Dieudonn{\'e} and Tate modules. 
Recall that $\widetilde{\mathcal{A}}_{\pi,\underline{k}} \twoheadrightarrow \mathcal{A}_{\pi,\underline{k}}^{\mathrm{isom}}$ arises from taking $G_X(\mathbb{Q})$-orbits. It then follows from~\eqref{eq:AAXtilde} and the commutative diagram
\[
\begin{tikzcd}
\widetilde{\mathcal{A}}_{\pi,\underline{k}} \arrow[r, "\sim"] \arrow[d]
& \widetilde{\mathfrak{X}}_{\pi,\underline{k}} \arrow[d] \\
\mathcal{A}_{\pi,\underline{k}}^{\mathrm{isom}} \arrow[r, "\Phi"]
& \mathfrak{X}_{\pi,\underline{k}}
\end{tikzcd}
\]
that the fiber $\mathcal{A}_{\pi,\underline{k},Y}^{\mathrm{isom}}$ above $Y \in \mathfrak{X}_{\pi}$ in $\mathcal{A}_{\pi,\underline{k}}^{\mathrm{isom}}$ satisfies
\begin{equation}\label{eq:AAX}
\mathcal{A}_{\pi,\underline{k},Y}^{\mathrm{isom}} \simeq G_X(\mathbb{Q})\backslash G_X(\mathbb{A}_f) / U_Y.
\end{equation}
By construction, this fiber consists exactly of all isomorphism classes of abelian varieties in $\mathcal{A}_{\pi,\underline{k}}^{\mathrm{isom}}$ that are everywhere locally isomorphic, with Dieudonn{\'e} and Tate modules given by the vector~$Y$. 
Ranging over all local isomorphism types $Y$, we obtain the following result. 

\begin{prop}\label{prop:Api}\textup{(cf.~\cite[Theorem~2.1]{xue2017counting})}
The set of isomorphism classes 
$\mathcal{A}_{\pi,\underline{k}}^{\mathrm{isom}}$ of $X$ can be described algebraically as
\begin{equation}\label{eq:Acalpi}
\mathcal{A}_{\pi,\underline{k}}^{\mathrm{isom}} \simeq \bigsqcup_{Y \in \mathfrak{X}_{\pi,\underline{k}}} G_X(\mathbb{Q}) \backslash G_X(\mathbb{A}_f) / U_Y;
\end{equation}
here, the isomorphism class of $X$ is sent to the neutral element.
\end{prop}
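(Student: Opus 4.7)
The proof is essentially an assembly of the structural pieces introduced in the section, so my plan is to organize the argument fiber by fiber over $\mathfrak{X}_{\pi,\underline{k}}$ and to spell out why each fiber gives exactly the double coset space on the right.

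First, I would set up the global partition. The map $\Phi:\mathcal{A}_{\pi,\underline{k}}^{\mathrm{isom}}\twoheadrightarrow\mathfrak{X}_{\pi,\underline{k}}$ (sending an abelian variety to the isomorphism classes of its Dieudonn\'e and Tate modules) is surjective by construction, so
\[
\mathcal{A}_{\pi,\underline{k}}^{\mathrm{isom}} \;=\; \bigsqcup_{Y\in\mathfrak{X}_{\pi,\underline{k}}}\mathcal{A}_{\pi,\underline{k},Y}^{\mathrm{isom}}.
\]
Thus it suffices to identify each fiber $\mathcal{A}_{\pi,\underline{k},Y}^{\mathrm{isom}}$ with $G_X(\mathbb{Q})\backslash G_X(\mathbb{A}_f)/U_Y$.

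Next I would pin down the fiber $\widetilde{\mathcal{A}}_{\pi,\underline{k},Y}$. By Tate's theorem the map $\widetilde{\mathcal{A}}_{\pi,\underline{k}}\to\widetilde{\mathfrak{X}}_{\pi,\underline{k}}$ sending a quasi-isogeny $\varphi\colon X'\to X$ to $(\varphi_*M(X'),(\varphi_*T_\ell(X'))_{\ell\neq p})$ is a $G_X(\mathbb{A}_f)$-equivariant bijection; so the fiber above any representative of $Y$ is a single $G_X(\mathbb{A}_f)$-orbit. Fixing a base point $\varphi\colon X'\to X$ in that fiber, its stabilizer is exactly the open compact subgroup $U_Y=\mathrm{Aut}(M)\times\prod_{\ell\neq p}\mathrm{Aut}_{R_\ell}(T_\ell)$ from \eqref{eq:UX}, since an element of $G_X(\mathbb{A}_f)$ preserves the image lattices/Dieudonn\'e module in $V_\ell(X)$ and $M(X)\otimes\mathbb{Q}_p$ if and only if it lies in this product of automorphism groups. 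This gives the identification \eqref{eq:AAXtilde}, i.e.\ $\widetilde{\mathcal{A}}_{\pi,\underline{k},Y}\simeq G_X(\mathbb{A}_f)/U_Y$.

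Then I would pass from $\widetilde{\mathcal{A}}_{\pi,\underline{k}}$ to $\mathcal{A}_{\pi,\underline{k}}^{\mathrm{isom}}$ by taking $G_X(\mathbb{Q})$-orbits. Two quasi-isogenies $\varphi_1\colon X'\to X$ and $\varphi_2\colon X''\to X$ have isomorphic sources precisely when there is an isomorphism $\alpha\colon X'\to X''$ with $\varphi_2\circ\alpha=\varphi_1$; conjugating this into the fiber description says that the $\mathcal{A}_{\pi,\underline{k}}^{\mathrm{isom}}$-class is the orbit of $\varphi_1$ under the left action of $G_X(\mathbb{Q})=\mathrm{End}^0(X)^\times$ by post-composition. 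This action preserves each fiber $\widetilde{\mathcal{A}}_{\pi,\underline{k},Y}$ (because the associated $(M,(T_\ell))$ are unchanged up to isomorphism), so quotienting the equivariant description from the previous step yields
\[
\mathcal{A}_{\pi,\underline{k},Y}^{\mathrm{isom}}\;\simeq\;G_X(\mathbb{Q})\backslash G_X(\mathbb{A}_f)/U_Y,
\]
with the base point $\mathrm{id}_X\colon X\to X$ mapping to the neutral double coset. Taking the disjoint union over $Y\in\mathfrak{X}_{\pi,\underline{k}}$ gives \eqref{eq:Acalpi}.

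The steps are therefore largely bookkeeping around the commutative diagram. The only genuinely nontrivial input is Tate's theorem, which is what makes the top horizontal arrow a bijection; the main place to be careful is verifying that the stabilizer of the base point is exactly $U_Y$ (rather than something larger), which comes down to checking that an element of $G_X(\mathbb{A}_f)$ fixing the tuple of Dieudonn\'e/Tate modules up to isomorphism can be adjusted, by the chosen base-point identification, to act as the identity on each—hence must lie in $\mathrm{Aut}(M)\times\prod_\ell\mathrm{Aut}_{R_\ell}(T_\ell)$.
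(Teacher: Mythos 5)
Your argument is correct and follows essentially the same route as the paper, whose ``proof'' is precisely the discussion preceding the proposition: the $G_X(\mathbb{A}_f)$-equivariant bijection $\widetilde{\mathcal{A}}_{\pi,\underline{k}}\simeq\widetilde{\mathfrak{X}}_{\pi,\underline{k}}$ from Tate's theorem, the identification of each fiber as $G_X(\mathbb{A}_f)/U_Y$ in \eqref{eq:AAXtilde}, passing to $G_X(\mathbb{Q})$-orbits to get \eqref{eq:AAX}, and then ranging over all $Y\in\mathfrak{X}_{\pi,\underline{k}}$. Your fiber-by-fiber bookkeeping, including the stabilizer check for $U_Y$ and sending the base point $\mathrm{id}_X$ to the neutral double coset, matches the paper's reasoning.
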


\begin{cor}
The number of isomorphism classes of abelian varieties in $\mathcal{A}_{\pi,\underline{k}}$ equals
\[
\vert \mathcal{A}_{\pi,\underline{k}}^{\mathrm{isom}} \vert =  \sum_{M \in \mathfrak{X}_{\pi,\underline{k},p}} \, \sum_{N \in \prod_{\ell\neq p}\mathfrak{X}_{\pi,\underline{k},\ell}} 
\vert G_X(\mathbb{Q}) \backslash G_X(\mathbb{A}_f) /U_{M \times N} \vert , 
\]    
where the sums run over the local isomorphism types $Y = M \times N \in \mathfrak{X}_{\pi,\underline{k},p} \times \prod_{\ell\neq p}\mathfrak{X}_{\pi,\underline{k},\ell} = \mathfrak{X}_{\pi,\underline{k}}$.
\end{cor}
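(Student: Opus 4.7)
The plan is to derive this corollary as an immediate cardinality count from Proposition~\ref{prop:Api}. Taking $|\cdot|$ of both sides of the bijection~\eqref{eq:Acalpi} yields
\[
|\mathcal{A}_{\pi,\underline{k}}^{\mathrm{isom}}| \;=\; \Bigl|\bigsqcup_{Y \in \mathfrak{X}_{\pi,\underline{k}}} G_X(\mathbb{Q}) \backslash G_X(\mathbb{A}_f) / U_Y\Bigr| \;=\; \sum_{Y \in \mathfrak{X}_{\pi,\underline{k}}} \bigl| G_X(\mathbb{Q}) \backslash G_X(\mathbb{A}_f) / U_Y \bigr|,
\]
using only that the cardinality of a disjoint union is the sum of the cardinalities of its parts.

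Next I would rewrite the index set using the product decomposition $\mathfrak{X}_{\pi,\underline{k}} = \mathfrak{X}_{\pi,\underline{k},p} \times \prod_{\ell\neq p}\mathfrak{X}_{\pi,\underline{k},\ell}$ recorded in Definition~\ref{def:Xpi}. Writing $Y = M \times N$ with $M \in \mathfrak{X}_{\pi,\underline{k},p}$ and $N \in \prod_{\ell\neq p}\mathfrak{X}_{\pi,\underline{k},\ell}$, the single sum breaks into the claimed double sum, and $U_Y = U_{M \times N}$ by construction (see~\eqref{eq:UX}). This yields the stated identity.

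I do not anticipate any real obstacle; the corollary is essentially a reformulation of Proposition~\ref{prop:Api}. The only convergence point worth noting is that each double coset space $G_X(\mathbb{Q}) \backslash G_X(\mathbb{A}_f) / U_Y$ is finite and all but finitely many of them are empty, since the set $\mathcal{A}_{\pi,\underline{k}}^{\mathrm{isom}}$ is itself finite for a fixed isogeny class of abelian varieties over a finite field. Hence the displayed identity is a genuine identity of nonnegative integers.
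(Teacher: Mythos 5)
Your proof is correct and matches the paper's (implicit) argument: the paper states this corollary without proof, treating it as an immediate consequence of Proposition~\ref{prop:Api} by taking cardinalities and splitting the disjoint union over $\mathfrak{X}_{\pi,\underline{k}} = \mathfrak{X}_{\pi,\underline{k},p} \times \prod_{\ell\neq p}\mathfrak{X}_{\pi,\underline{k},\ell}$ into the double sum.
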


As mentioned above, in this article we will restrict our attention to abelian varieties with commutative endomorphism algebra. 
By \cite[Theorem 2.(c)]{Tate66}, this is equivalent to requiring~$h$ to be \emph{square-free}, i.e. $k_i~=~1$ for all $i$.
From now on, we will therefore omit the multiplicities from the notation and simply write $\mathcal{A}_\pi$, $\mathcal{A}^{\mathrm{isom}}_\pi$, $\mathfrak{X}_\pi$, $\mathfrak{X}_{\pi,p}$ and $\mathfrak{X}_{\pi,\ell}$.
In particular, Equation~\eqref{eq:End0Asplit} then reads $\mathrm{End}^0(X) \simeq \mathrm{End}^0(X_1) \times \ldots \times \mathrm{End}^0(X_r)$. 
Furthermore, the fiber above 
$Y = M \times N$,
given as in~\eqref{eq:AAX}, then equals the (usual) class group $\mathrm{Cl}(\mathcal{O}_Y)$, where 
\begin{equation} \label{eq-OX}
\mathcal{O}_Y = \mathrm{End}^0(X) \cap \left( \mathrm{End}(M) \times \prod_{\ell \neq p} \mathrm{End}_{R_\ell}(T_{\ell}) \right);
\end{equation}
this intersection takes place in $\mathrm{End}^0(X) \otimes_{\mathbb{Q}} \mathbb{A}_f$.
This also implies that its cardinality is the (usual) class number $h(\mathcal{O}_Y)$. We summarize this discussion in the following corollary.

\begin{cor}\label{cor:Api} 
The number of isomorphism classes of abelian varieties with commutative endomorphism algebra in $\mathcal{A}_\pi$ equals 
\[
\vert \mathcal{A}_{\pi}^{\mathrm{isom}} \vert =  \sum_{M \in \mathfrak{X}_{\pi,p}} \, \sum_{N \in \prod_{\ell\neq p}\mathfrak{X}_{\pi,\ell}} h(\mathcal{O}_{M \times N}).
\]
\end{cor}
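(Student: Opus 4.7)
The plan is to apply Proposition~\ref{prop:Api} in the square-free case $\underline{k}=(1,\ldots,1)$ and then take cardinalities term by term. The proposition gives
$$\mathcal{A}_{\pi}^{\mathrm{isom}} \simeq \bigsqcup_{Y \in \mathfrak{X}_{\pi}} G_X(\mathbb{Q}) \backslash G_X(\mathbb{A}_f) / U_Y,$$
so counting on both sides yields $|\mathcal{A}_{\pi}^{\mathrm{isom}}| = \sum_{Y} |G_X(\mathbb{Q}) \backslash G_X(\mathbb{A}_f)/U_Y|$. Using the factorization $\mathfrak{X}_\pi = \mathfrak{X}_{\pi,p} \times \prod_{\ell \neq p} \mathfrak{X}_{\pi,\ell}$ from Definition~\ref{def:Xpi}, every $Y \in \mathfrak{X}_\pi$ is uniquely of the form $Y = M \times N$ with $M \in \mathfrak{X}_{\pi,p}$ and $N \in \prod_{\ell \neq p}\mathfrak{X}_{\pi,\ell}$, which turns the single sum into the iterated double sum appearing in the statement.

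The remaining input is the identification of each double coset $G_X(\mathbb{Q}) \backslash G_X(\mathbb{A}_f)/U_Y$ with $\mathrm{Cl}(\mathcal{O}_{M\times N})$. This is precisely what is established in the paragraph preceding the corollary: in the commutative case, $\mathrm{End}^0(X) \simeq E$ is an \'etale $\mathbb{Q}$-algebra, hence $G_X(\mathbb{Q}) = E^\times$ and $G_X(\mathbb{A}_f) = (E \otimes_{\mathbb{Q}} \mathbb{A}_f)^\times$; moreover, under this identification the open compact subgroup $U_Y$ from~\eqref{eq:UX} is the unit group of the adelic completion of the global order $\mathcal{O}_Y$ defined in~\eqref{eq-OX}. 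The classical idelic description of the Picard group of a commutative one-dimensional order then delivers $E^\times \backslash (E \otimes_\mathbb{Q} \mathbb{A}_f)^\times / U_Y \simeq \mathrm{Cl}(\mathcal{O}_{M\times N})$, whose cardinality is $h(\mathcal{O}_{M\times N})$ by definition.

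The only genuinely nontrivial ingredient is the local--global compatibility that realizes $U_Y$ as the completion of $\mathcal{O}_Y$ inside $\mathrm{End}^0(X) \otimes_{\mathbb{Q}} \mathbb{A}_f$. This is handled in the excerpt immediately before the corollary by observing that $\mathrm{End}(X)$ and $\mathrm{End}(X')$ agree locally at almost all primes for $X'\in\mathcal{A}_\pi$, so that $\mathcal{O}_Y$ is determined by its completions at the finitely many places where $U_Y$ differs from~$\widehat{R}^\times$. Once this matching has been invoked, the corollary follows by direct substitution into~\eqref{eq:Acalpi} and does not require any further argument.
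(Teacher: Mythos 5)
Your proof is correct and takes essentially the same route as the paper: apply Proposition~\ref{prop:Api} with $\underline{k}=(1,\ldots,1)$, split the index set as $\mathfrak{X}_\pi = \mathfrak{X}_{\pi,p} \times \prod_{\ell\neq p}\mathfrak{X}_{\pi,\ell}$, and then invoke the identification (stated in the paragraph just before the corollary) of each double coset $G_X(\Q)\backslash G_X(\mathbb{A}_f)/U_Y$ with $\mathrm{Cl}(\mathcal{O}_Y)$ via the idelic description of the Picard group of the order $\mathcal{O}_Y$ from~\eqref{eq-OX}. The paper presents the corollary merely as a ``summary of this discussion,'' and your write-up makes the same chain of deductions explicit.
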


\section{Global representatives of Tate modules} \label{sec:tate}
In Section~\ref{sec-abvars} we realized the $\ell$-adic Tate modules of abelian varieties in $\mathcal{A}_\pi$ as $R_{\ell}$-lattices, for every rational prime $\ell\neq p$.
In this section, we provide a series of reduction steps that allow us to compute the isomorphism classes of these $R_{\ell}$-lattices using existing algorithms for local isomorphism classes -- also known as weak equivalence classes -- of fractional ideals, see Theorem~\ref{thm:W_R_X_ell}.
The current state-of-the-art of these algorithms is \cite[\texttt{ComputeW}]{Mar23_Loc}.
This algorithm has the desirable feature of being exact, that is, not affected by the precision issues that come from working with local objects.
The content of this section will be used also later in the text in the computation of the Dieudonn\'e modules, that is, the part at $p$.
More precisely, each Dieudonn\'e module is the direct sum of an \'etale, a multiplicative and local-local part, see Subsection~\ref{sec:connetale}. 
The computation of the first two parts can be completely reduced to a finite number of calls of \cite[\texttt{ComputeW}]{Mar23_Loc}, see Proposition~\ref{prop-01}.
The first two steps out of four of computation of the local-local part also rely on the content of this section, see Subsection~\ref{sec:Step1}.

\subsection{Isomorphism classes of fractional ideals} \label{sec:isofrac}
Let $Z$ be either $\Z$ or $\Z_p$, and let $Q$ be the field of fractions of~$Z$.
Let $S$ be a $Z$-order in an \'etale $Q$-algebra~$E$ with maximal order $\OO$.
For a fractional $S$-ideal $I$, the multiplicator ring $(I:I)$ is an order.
For a maximal ideal $\ell$ in $Z$, let $E_\ell=E\otimes_Q Q_\ell$, $S_\ell =S \otimes_Z Z_\ell$, and $I_\ell = I \otimes_Z Z_\ell$.
Let $\frl$ be a maximal ideal of $S$.
Denote by $S_\frl$ the completion of $S$ at $\frl$.
Let $I_\frl=I\otimes_S S_\frl$.

\begin{df}
\label{def:Ws}
Let $\mathcal{S}$ be a (possibly infinite) set of maximal ideals $\ell$ of $Z$,
and $\mathcal{T}$ be a (possibly infinite) set of maximal ideals $\mathfrak{l}$ of $S$.
We define the following objects:
\[
\begin{split}
    \cW_\ell(S) & =\frac{\{ \text{fractional $S$-ideals} \}}{ \{ I_\ell \simeq J_\ell \text{ as $S_\ell$-modules} \}}, \\
    \cW_\frl(S) &=\frac{\{ \text{fractional $S$-ideals} \}}{ \{ I_\frl \simeq J_\frl \text{ as $S_\frl$-modules} \}}, \\
    \cW(S) &=\frac{\{ \text{fractional $S$-ideals} \}}{ \{ I_\ell \simeq J_\ell \text{ as $S_\ell$-modules, for every $\ell$} \}}, \\
    \cW_{\mathcal{S}}(S) & =\frac{\{ \text{fractional $S$-ideals} \}}{ \{ I_\ell \simeq J_\ell \text{ as $S_\ell$-modules, for every $\ell \in \mathcal{S}$} \}}, \\
    \cW_{\mathcal{T}}(S) & =\frac{\{ \text{fractional $S$-ideals} \}}{ \{ I_\frl \simeq J_\frl \text{ as $S_\frl$-modules, for every $\frl \in \mathcal{T}$} \}}.
    \end{split}
\]
We denote the class of a fractional $S$-ideal $I$ in $\cW_\ell(S)$ (resp.~$\cW_\frl(S)$, $\cW(S)$, $\cW_{\mathcal{S}}(S)$,  $\cW_{\mathcal{T}}(S)$)
by $[I]_\ell$ (resp.~$[I]_{\frl}$, $[I]$, $[I]_\mathcal{S}$, $[I]_\mathcal{T}$).
\end{df}

This subsection will be devoted to studying the relations between the objects defined in Definition~\ref{def:Ws}, which are local in nature, and to giving a concrete description of them by means of global representatives.

\begin{remark}
    \label{rmk:W_R_semilocal}
    If $Z=\Z_p$ then $\cW(S)$ coincides with the set of $S$-linear isomorphism classes of fractional $S$-ideals.
\end{remark}

\begin{remark}
    \label{rmk:monoid}
    Ideal multiplication endows each set defined in Definition \ref{def:Ws} with a commutative monoid structure, whose unit is given by the class of the order $S$. This structure will not be used in this article.
    The class $[I]$ of $I$ in $\cW(S)$ is often called the genus of $I$. 
    There is a vast literature studying genera of fractional ideals and more generally of finitely generated modules. 
    See for example \cite{Roggenkamp70I}, \cite{Roggenkamp70II}, \cite{Guralnick84}, \cite{Guralnick87} and \cite{Reiner03}.
    Moreover, two fractional $S$-ideals $I$ and $J$ are in the same genus if and only if they are weakly equivalent, that is, their localizations at every maximal ideal of $S$ are isomorphic, see \cite[Section~5]{LevyWiegand85}.
    The definition of weak equivalence was originally given in \cite{dadetz62}.
    Results to compute and classify weak equivalence classes are given in \cite{MarICM18}, \cite{MarsegliaType22} and \cite{Mar23_Loc}.
\end{remark}

\begin{lem}\label{lem:ell_vs_frl}
    Let $I$ and $J$ be two fractional $S$-ideals and fix a maximal ideal $\ell$ of $Z$. Then the following statements are equivalent:
    \begin{enumerate}[(i)]
        \item \label{lem:ell_vs_frl:ell} $[I]_\ell=[J]_\ell$.
        \item \label{lem:ell_vs_frl:frl} $I_\frl \simeq J_\frl$ as $S_\frl$-modules for every maximal ideal $\frl$ of $S$ above $\ell$.
    \end{enumerate}
\end{lem}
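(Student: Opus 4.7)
The plan is to reduce the statement to the ring-theoretic fact that $S_\ell$ splits as a finite product $\prod_{\frl \mid \ell} S_\frl$, indexed by the maximal ideals $\frl$ of $S$ lying above $\ell$, and then transport isomorphisms across this decomposition. Since $S$ is a $Z$-order in the \'etale $Q$-algebra $E$, the ring $S_\ell = S\otimes_Z Z_\ell$ is finitely generated as a $Z_\ell$-module, hence $\ell$-adically complete. Its maximal ideals are precisely the extensions $\frl S_\ell$ for $\frl \mid \ell$, so the quotient by the Jacobson radical is a finite product of fields. Lifting the corresponding orthogonal idempotents via Hensel's lemma yields a canonical decomposition $S_\ell \simeq \prod_{\frl \mid \ell} S_\frl$, in which each factor is identified with the $\frl$-adic completion $S_\frl$ from the paper's setup.

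Next, for any fractional $S$-ideal $I$, the functor $-\otimes_Z Z_\ell$ is compatible with the product decomposition of $S_\ell$, giving $I_\ell \simeq \prod_{\frl \mid \ell} I_\frl$ as $S_\ell$-modules, and similarly for $J$. From here the implications go through almost formally. Assuming (i), an $S_\ell$-linear isomorphism $I_\ell \to J_\ell$ necessarily respects the central idempotents corresponding to each $\frl \mid \ell$, and therefore restricts, factor by factor, to $S_\frl$-linear isomorphisms $I_\frl \to J_\frl$, proving (ii). Assuming (ii), a choice of $S_\frl$-linear isomorphisms $I_\frl \to J_\frl$ for each $\frl \mid \ell$ assembles, via the product decomposition, to a single $S_\ell$-linear isomorphism $I_\ell \to J_\ell$, which yields (i).

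The only subtlety worth pinning down is the identification of the factor of $S_\ell$ at $\frl$ with the completion $S_\frl$ as defined in the paper (rather than merely the localization $(S_\ell)_{\frl S_\ell}$). This is standard: $S_\ell$ is already $\ell$-adically complete, so the $\frl S_\ell$-adic topology on the factor coincides with the $\frl$-adic topology pulled back from $S$, and localization agrees with completion on a complete semilocal factor. Apart from this bookkeeping, the argument is a routine application of the splitting of semilocal complete rings, and I do not anticipate any genuine obstacle.
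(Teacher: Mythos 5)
Your argument is correct and follows essentially the same route as the paper: decompose the complete semilocal ring $S_\ell$ as the product $\prod_{\frl\mid\ell} S_{\frl}$ of its completions at the maximal ideals above $\ell$, tensor with $I$ and $J$, and compare componentwise. The extra details you supply (idempotent lifting and the identification of factors with completions) merely make explicit what the paper's proof invokes implicitly.
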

\begin{proof}
    Denote by $\frl_1,\ldots, \frl_n$ the maximal ideals of $S$ above $\ell$.
    Since $S_\ell$ is a complete semilocal ring, we have a canonical isomorphism
    \[ S_\ell \simeq  S_{\frl_1} \times \ldots \times  S_{\frl_n}. \]
    Tensoring with $I$ and $J$, we obtain the equivalence of \ref{lem:ell_vs_frl:ell} and \ref{lem:ell_vs_frl:frl}.
\end{proof}

\begin{lem}\label{rmk:WRlocalize}
    Let $\frl$ be a maximal ideal of $S$ above the maximal ideal $\ell$ of $Z$.
    \begin{enumerate}[(i)]
        \item \label{rmk:WRlocalize:1}
        For every fractional $S_\ell$-ideal $I$ there exists a fractional $S$-ideal $\widetilde I$ such that $\widetilde I \otimes_Z Z_\ell = I$. The class $[\widetilde I]_\ell$ in $\cW_\ell(S)$ is uniquely determined by the class $[I]_\ell$ in  $\cW(S_\ell)$.
        Hence, we have a canonical bijection $\cW_\ell(S) \longleftrightarrow \cW(S_\ell)$.
        \item \label{rmk:WRlocalize:2}
        Similarly, we have a canonical bijection $\cW_\frl(S) \longleftrightarrow \cW(S_\frl)$.
    \end{enumerate}
\end{lem}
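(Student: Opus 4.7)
The plan is to exhibit both bijections as the natural localization maps and verify they are surjective; well-definedness and injectivity will be immediate from the definitions.

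For part (i), I would identify the proposed bijection with the map $\cW_\ell(S) \to \cW(S_\ell)$ sending $[I]_\ell$ to the $S_\ell$-isomorphism class of $I_\ell := I \otimes_Z Z_\ell$. Using Remark~\ref{rmk:W_R_semilocal} to identify $\cW(S_\ell)$ with the set of $S_\ell$-isomorphism classes of fractional $S_\ell$-ideals, this map is well-defined and injective by the very definition of the equivalence relation on $\cW_\ell(S)$. The substantive content is therefore surjectivity: given a fractional $S_\ell$-ideal $I$, I must produce a fractional $S$-ideal $\widetilde I \subseteq E$ with $\widetilde I \otimes_Z Z_\ell = I$. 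The case $Z = \Z_p$ is trivial since $Z_\ell = Z$ and $E_\ell = E$, so $\widetilde I := I$ works. For $Z = \Z$, the plan is to pick $S_\ell$-generators $y_1, \ldots, y_n$ of $I$, use density of $E$ in $E_\ell$ together with the openness of $\ell I$ (which is a $\Z_\ell$-sublattice of finite index in the $\Z_\ell$-lattice $I$) to select $x_i \in E$ with $x_i - y_i \in \ell I$, and then appeal to Nakayama's lemma on the semilocal ring $S_\ell$, where $\ell$ sits in the Jacobson radical, to conclude that $x_1, \ldots, x_n$ also generate $I$ over $S_\ell$. The candidate $\widetilde I := S x_1 + \cdots + S x_n$ will then be a finitely generated $S$-submodule of $E$ with $\widetilde I \otimes_Z Z_\ell = I$; comparing $Z$-ranks with the assumption that $I$ spans $E_\ell$ over $Q_\ell$ will show that $\widetilde I$ spans $E$ over $Q$, so it is genuinely a fractional $S$-ideal.

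For part (ii), I would reduce to (i) via the decomposition $S_\ell \simeq \prod_{\frl' \mid \ell} S_{\frl'}$ already used in the proof of Lemma~\ref{lem:ell_vs_frl}. The map $[I]_\frl \mapsto [I \otimes_S S_\frl]$ from $\cW_\frl(S)$ to $\cW(S_\frl)$ is again well-defined and injective by definition. For surjectivity, given a fractional $S_\frl$-ideal $I'$, the plan is to assemble the fractional $S_\ell$-ideal $I := I' \times \prod_{\frl' \mid \ell,\, \frl' \neq \frl} S_{\frl'}$ under the decomposition above, apply part (i) to produce a fractional $S$-ideal $\widetilde I$ with $\widetilde I \otimes_Z Z_\ell = I$, and then observe that $\widetilde I \otimes_S S_\frl$ is the $\frl$-component of $I$, namely $I'$.

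I expect the only non-bookkeeping step to be the density-plus-Nakayama approximation in part (i); once generators of $I$ are lifted to $E$ carefully enough to cut out the same $S_\ell$-module, everything else is formal manipulation of the equivalence relations.
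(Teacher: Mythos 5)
Your proposal is correct, and its overall architecture (reduce everything to surjectivity of the localization maps, then in (ii) glue the given $S_\frl$-ideal with the trivial components $S_{\frl'}$ at the other primes above $\ell$ and invoke (i)) matches the paper; part (ii) is essentially verbatim the paper's argument. The one place where you genuinely diverge is the construction of the global model $\widetilde I$ in (i): the paper simply sets $\widetilde I = I \cap E$ after identifying $E$ with its image in $E_\ell$, whereas you lift a finite set of $S_\ell$-generators $y_1,\ldots,y_n$ of $I$ to elements $x_i \in E$ with $x_i - y_i \in \ell I$ (density of $E$ in $E_\ell$ plus openness of $\ell I$) and then use Nakayama over the semilocal ring $S_\ell$, in which $\ell$ lies in the Jacobson radical, to see that $\widetilde I = Sx_1 + \cdots + Sx_n$ localizes to $I$. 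Your route is slightly longer but has a concrete payoff: it produces a visibly finitely generated full $S$-lattice in $E$, so $\widetilde I$ is a fractional $S$-ideal on the nose, while the intersection $I \cap E$ used in the paper is a priori only an $S$-submodule of $E$ with the right completion at $\ell$ (e.g.\ for $S = \Z$, $I = \Z_\ell$ one gets $\Z_{(\ell)}$, which is not finitely generated), so the paper's one-line construction implicitly leaves a small finiteness point to the reader that your approximation argument handles explicitly. Your rank comparison showing that $\widetilde I$ spans $E$ over $Q$, and your separate treatment of the trivial case $Z = \Z_p$, are both fine.
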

\begin{proof}
    Identify $E$ with its image in $E_\ell$.
    Then $\widetilde I = I \cap E$ satisfies $\widetilde I \otimes_Z Z_\ell = I$.
    Morever, we see that if $J$ is a second fractional $S_\ell$-ideal, then $[I] = [J]$ if and only if $[\widetilde I]_\ell = [\widetilde J]_\ell$.
    This completes the proof of \ref{rmk:WRlocalize:1}.
    For \ref{rmk:WRlocalize:2}, assume that $\frl=\frl_1,\ldots,\frl_n$ are the maximal ideals of $S$ above $\ell$ and that we are given a fractional $S_\frl$-ideal $I$.
    Consider the fractional $S_\ell$-ideal $I'=I \times S_{\frl_2} \times \ldots S_{\frl_n}$ and the fractional $S$-ideal~$\widetilde I$ such that $\widetilde I \otimes_Z Z_\ell = I'$.
    We get that $(\widetilde I)_\frl = I$ and that the class of $\widetilde I$ in $\cW_\frl(S)$ is uniquely determined by the class of $I$ in $\cW(S_\frl)$.
\end{proof}

Given a set $\mathcal{S}$ as above, denote by $\mathcal{S}_0$ the finite subset of $\mathcal{S}$ consisting of maximal ideals of $Z$ dividing the index $[\OO:S]$.
Define $\cW_{\mathcal{S}_0}(S)$ and $[I]_{\mathcal{S}_0}$ analogously to $\cW_{\mathcal{S}}(S)$ and $[I]_{\mathcal{S}}$.
Similarly, given a set $\mathcal{T}$ as above, denote by $\mathcal{T}_0$ the finite subset of $\mathcal{T}$ consisting of maximal ideals of $S$ containing the conductor $\frf=(S:\OO)$ of $S$.
Define $\cW_{\mathcal{T}_0}(S)$ and $[I]_{\mathcal{T}_0}$ analogously.

\begin{prop}\label{prop:S_to_S0}
    Consider the natural surjections 
    \[ \cW(S) \overset{i_1}{\longrightarrow} \cW_{\mathcal{S}}(S) \overset{i_2}{\longrightarrow}  \cW_{\mathcal{S}_0}(S) \]
    and
    \[ \cW(S) \overset{j_1}{\longrightarrow} \cW_{\mathcal{T}}(S) \overset{j_2}{\longrightarrow}  \cW_{\mathcal{T}_0}(S). \]
    Then $i_2$ and $j_2$ are bijections.
    Moreover, $i_1$ is a bijection if and only if $\mathcal{S}$ contains all maximal ideals above the index $[\OO:S]$, and 
    $j_1$ is a bijection if and only if $\mathcal{T}$ contains all maximal ideals above the conductor $\frf$ of $S$.
\end{prop}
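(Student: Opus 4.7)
The strategy is to reduce everything to one observation: at a prime $\ell \notin \mathcal{S}_0$ (resp.\ $\frl \notin \mathcal{T}_0$), the local isomorphism condition is automatic because $S$ is already maximal there. Indeed, if $\ell \nmid [\OO:S]$ then $S_\ell = \OO_\ell$, and similarly $\frl \not\supseteq \frf$ forces $S_\frl = \OO_\frl$ (a standard consequence of the definition of the conductor). In either case the localization is a (product of) complete DVR(s), so every fractional ideal is principal and hence $S_\ell$-linearly (resp.\ $S_\frl$-linearly) isomorphic to the ring itself. Once this vacuity statement is in hand, the claims for $i_2$, $j_2$ and the ``$\supseteq$'' direction of the biconditionals for $i_1$, $j_1$ are essentially formal.

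For $i_2$ and $j_2$: since the conditions at primes outside $\mathcal{S}_0$ (resp.\ $\mathcal{T}_0$) are automatic, the equivalence relations defining $\cW_{\mathcal{S}}(S)$ and $\cW_{\mathcal{S}_0}(S)$ (resp.\ $\cW_{\mathcal{T}}(S)$ and $\cW_{\mathcal{T}_0}(S)$) coincide, so $i_2$ and $j_2$ are bijections. For the easy direction of the biconditionals: if $\mathcal{S}$ contains every maximal ideal of $Z$ above $[\OO:S]$, then $\mathcal{S}_0$ already exhausts the primes at which the local condition has content, making the composition $i_2 \circ i_1 \colon \cW(S) \to \cW_{\mathcal{S}_0}(S)$ injective and hence bijective, so $i_1$ is a bijection. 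The argument for $j_1$ is identical, replacing $[\OO:S]$ by $\frf$.

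For the remaining direction I argue by contrapositive. Suppose $\mathcal{S}$ omits some $\ell_0 \mid [\OO:S]$. I plan to construct an overorder $T \supsetneq S$ differing from $S$ only at $\ell_0$ by setting $T = \{x \in \OO : \ell_0^N x \in S \text{ for some } N \geq 0\}$, i.e.\ the preimage in $\OO$ of the $\ell_0$-primary component of the finite group $\OO/S$. A direct check shows that $T$ is a subring with $T_\ell = S_\ell$ for all $\ell \neq \ell_0$ and $T_{\ell_0} = \OO_{\ell_0}$. Viewed as a fractional $S$-ideal, $T$ satisfies $[T]_{\mathcal{S}} = [S]_{\mathcal{S}}$; on the other hand, any $S_{\ell_0}$-module isomorphism between fractional ideals of $S_{\ell_0}$ inside $E_{\ell_0}$ is multiplication by an element of $E_{\ell_0}^\times$, hence preserves the multiplicator ring, and $(T_{\ell_0} : T_{\ell_0}) = \OO_{\ell_0} \neq S_{\ell_0} = (S_{\ell_0}:S_{\ell_0})$, so $[T] \neq [S]$. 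Thus $i_1$ is not injective. The argument for $j_1$ is entirely parallel, using the $\frl_0$-primary component of $\OO/S$ (available since $\OO/S$ is an $S/\frf$-module and $S/\frf$ is semilocal Artinian) to build an overorder differing from $S$ only at $\frl_0$.

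The only real technical content lies in the third paragraph: constructing $T$ with the right local behavior. Once that is in place, the multiplicator-ring invariant immediately detects non-isomorphism and the rest is formal manipulation of the vacuity observation from the first step.
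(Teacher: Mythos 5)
Your proof is correct and rests on the same key observation as the paper's: at a prime not dividing $[\OO:S]$ (resp.\ a maximal ideal of $S$ not containing $\frf$) the localization of $S$ is maximal, hence a principal ideal ring, so the local isomorphism condition there is vacuous. From this the paper's proof declares the claims about $i_1$, $i_2$, $j_1$, $j_2$ to follow ``immediately'' and stops. You go further and make the harder ``only if'' direction explicit: if $\mathcal{S}$ omits a prime $\ell_0 \mid [\OO:S]$, you construct the saturation $T$ of $S$ at $\ell_0$ (the preimage in $\OO$ of the $\ell_0$-primary part of $\OO/S$), verify $T_\ell = S_\ell$ for $\ell \neq \ell_0$ and $T_{\ell_0} = \OO_{\ell_0}$, and then use the multiplicator ring as an invariant to see $[T] \neq [S]$ while $[T]_{\mathcal{S}} = [S]_{\mathcal{S}}$. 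This is a clean way to supply the non-injectivity witness; the paper implicitly relies on the same fact (that $\cW_{\ell_0}(S)$ is nontrivial when $\ell_0 \mid [\OO:S]$, detectable by multiplicator rings) together with the gluing machinery of Lemma~\ref{lem:glue_local_parts}/Proposition~\ref{prop:global_to_frl}, but does not spell it out. So your argument is not a different route so much as a fully-detailed version of the paper's compressed one, and it is correct in both the $\mathcal{S}$- and $\mathcal{T}$-versions.
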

\begin{proof}
    A maximal ideal $\ell$ does not divide the index $[\OO:S]$ if and only if $S_\ell = \OO_\ell$.
    For such a prime $\ell$, every fractional $S$-ideal $I$ satisfies $I_\ell \simeq \OO_\ell$, since $\OO_\ell$ is a principal ideal ring.
    This immediately implies the statements about $i_1$ and $i_2$.
    The statements about $j_1$ and $j_2$ follow analogously from the observation that a maximal ideal $\frl$ of $S$ does not divide the conductor $\frf$ of~$S$ if and only if $S_\frl = \OO_\frl$.
\end{proof}

\begin{lem}\label{lem:glue_local_parts}
    Let $\mathfrak{l}_1,\ldots,\mathfrak{l}_n$ be maximal ideals of $S$.
    Consider a vector of fractional $S$-ideals $(I_1,\ldots,I_n)$ such that each~$I_i$ is contained in~$S$.
    For each~$i$, let~$k_i$ be a nonnegative integer such that~$\mathfrak{l}_i^{k_i}S_{\mathfrak{l}_i} \subseteq (I_i)_{\mathfrak{l}_i}$.
    Set
    \[ J = \sum_{i=1}^n\left( (I_i+\mathfrak{l}_i^{k_i})\prod_{j\neq i} \mathfrak{l}_j^{k_j}  \right). \]
    Then~$J_{\mathfrak{l}_i}=(I_i)_{\mathfrak{l}_i}$ for each~$i$, and~$J_\mathfrak{l}=S_\mathfrak{l}$ for every other maximal ideal~$\mathfrak{l}$ of $S$.
\end{lem}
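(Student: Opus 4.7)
The plan is to verify the two assertions one maximal ideal at a time, using only that localization commutes with finite sums and products of ideals and the elementary fact that for distinct maximal ideals $\mathfrak{l}_j \neq \mathfrak{l}$ of $S$ one has $\mathfrak{l}_j S_\mathfrak{l} = S_\mathfrak{l}$, hence $\mathfrak{l}_j^{k_j} S_\mathfrak{l} = S_\mathfrak{l}$ for every $k_j \ge 0$. From the containments $I_i \subseteq S$ one gets $J \subseteq S$ for free, so all localizations $J_\mathfrak{l}$ sit inside $S_\mathfrak{l}$.

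First I would handle a maximal ideal $\mathfrak{l}$ of $S$ that is different from each $\mathfrak{l}_i$. Every factor $(\mathfrak{l}_j^{k_j})_\mathfrak{l}$ in the defining expression of $J$ equals $S_\mathfrak{l}$, so each summand $(I_i + \mathfrak{l}_i^{k_i}) \prod_{j \neq i} \mathfrak{l}_j^{k_j}$ localizes to $S_\mathfrak{l}$. Combined with $J_\mathfrak{l} \subseteq S_\mathfrak{l}$ this gives $J_\mathfrak{l} = S_\mathfrak{l}$.

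Next I would fix some index $i$ and localize at $\mathfrak{l}_i$. The $i$-th summand contributes $\bigl((I_i)_{\mathfrak{l}_i} + \mathfrak{l}_i^{k_i} S_{\mathfrak{l}_i}\bigr) \cdot S_{\mathfrak{l}_i}$, where the factor $S_{\mathfrak{l}_i}$ comes from the localization of $\prod_{j \neq i} \mathfrak{l}_j^{k_j}$ at $\mathfrak{l}_i$; by the hypothesis $\mathfrak{l}_i^{k_i} S_{\mathfrak{l}_i} \subseteq (I_i)_{\mathfrak{l}_i}$ this simplifies to $(I_i)_{\mathfrak{l}_i}$. For each $j \neq i$, the first factor $(I_j + \mathfrak{l}_j^{k_j})$ of the $j$-th summand localizes to $S_{\mathfrak{l}_i}$ (since $(\mathfrak{l}_j^{k_j})_{\mathfrak{l}_i} = S_{\mathfrak{l}_i}$), while the trailing product $\prod_{k \neq j} \mathfrak{l}_k^{k_k}$ still contains $\mathfrak{l}_i^{k_i}$ and localizes to $\mathfrak{l}_i^{k_i} S_{\mathfrak{l}_i}$; so the $j$-th summand contributes $\mathfrak{l}_i^{k_i} S_{\mathfrak{l}_i}$, which is already absorbed by $(I_i)_{\mathfrak{l}_i}$. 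Summing over all summands gives $J_{\mathfrak{l}_i} = (I_i)_{\mathfrak{l}_i}$.

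There is no real obstacle here; the argument is pure bookkeeping with coprimality and localization, and the only step that actually uses a hypothesis of the lemma (rather than the coprimality of distinct maximal ideals) is the absorption $(I_i)_{\mathfrak{l}_i} + \mathfrak{l}_i^{k_i} S_{\mathfrak{l}_i} = (I_i)_{\mathfrak{l}_i}$, which is built into the choice of the exponents $k_i$.
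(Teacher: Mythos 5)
Your proof is correct. The paper does not give its own argument for this lemma; it delegates to an external reference (Theorem~4.4 of the cited Mar23\_Loc\_arXiv preprint), so there is no in-text proof to compare against, and your direct verification fills that gap. One small terminological point worth flagging: in the setup of Section~3.1 of the paper, $S_\mathfrak{l}$ denotes the \emph{completion} of $S$ at $\mathfrak{l}$ and $I_\mathfrak{l} = I \otimes_S S_\mathfrak{l}$, not the localization, so strictly speaking you are working over the completed local ring. This does not affect a single step of your argument: $S_\mathfrak{l}$ is flat over $S$ (so $-\otimes_S S_\mathfrak{l}$ commutes with finite sums and products of fractional ideals), and any maximal ideal $\mathfrak{l}' \neq \mathfrak{l}$ of $S$ contains an element lying outside $\mathfrak{l}$, which becomes a unit in $S_\mathfrak{l}$, giving $\mathfrak{l}' S_\mathfrak{l} = S_\mathfrak{l}$. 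With that understood, your two-case bookkeeping — each factor $\mathfrak{l}_j^{k_j}$ with $j$ distinct from the ambient index trivializes, the $i$-th summand localizes to $(I_i)_{\mathfrak{l}_i}$ by the absorption hypothesis $\mathfrak{l}_i^{k_i} S_{\mathfrak{l}_i} \subseteq (I_i)_{\mathfrak{l}_i}$, and the remaining summands localize into $\mathfrak{l}_i^{k_i} S_{\mathfrak{l}_i}$ because they retain the $\mathfrak{l}_i^{k_i}$ factor — is exactly right, as is the containment $J \subseteq S$ used to bound $J_\mathfrak{l}$ from above in the first case.
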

\begin{proof}
    See the proof of \cite[Theorem~4.4]{Mar23_Loc}.
\end{proof}

\begin{prop}\label{prop:S0_to_prod_ell}
    The natural maps
    \[ \cW_{\mathcal{S}_0}(S) \longrightarrow  \prod_{\ell\in \mathcal{S}_0}\cW_\ell(S) \]
    and
    \[ \cW_{\mathcal{T}_0}(S) \longrightarrow  \prod_{\frl\in \mathcal{T}_0}\cW_\frl(S), \]
    are bijections.
\end{prop}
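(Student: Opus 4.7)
The plan is to verify well-definedness, injectivity, and surjectivity of both maps, with the bulk of the argument concentrated in surjectivity, which reduces to a gluing problem that the preceding lemmas are tailor-made to solve.

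Well-definedness and injectivity are essentially tautological. By definition of $\cW_{\mathcal{S}_0}(S)$, two fractional $S$-ideals $I$ and $J$ satisfy $[I]_{\mathcal{S}_0}=[J]_{\mathcal{S}_0}$ if and only if $I_\ell \simeq J_\ell$ as $S_\ell$-modules for every $\ell\in \mathcal{S}_0$, which is precisely the condition that $[I]_\ell=[J]_\ell$ in $\cW_\ell(S)$ for each such $\ell$. The second map behaves identically by the definition of $\cW_{\mathcal{T}_0}(S)$.

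For surjectivity, I would treat the $\mathcal{T}_0$ case first, as it is the more direct. Given a prescribed tuple $([I_\frl]_\frl)_{\frl\in \mathcal{T}_0}$, Lemma \ref{rmk:WRlocalize}\ref{rmk:WRlocalize:2} supplies, for each $\frl\in \mathcal{T}_0$, a fractional $S$-ideal $\widetilde{I_\frl}$ with $(\widetilde{I_\frl})_\frl$ representing the given class in $\cW(S_\frl)$. Scaling each $\widetilde{I_\frl}$ by a suitable nonzero element of $Z$ (an operation which does not change the local isomorphism class at $\frl$) we may assume $\widetilde{I_\frl}\subseteq S$, and choose nonnegative integers $k_\frl$ with $\frl^{k_\frl}S_\frl \subseteq (\widetilde{I_\frl})_\frl$. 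Feeding the finite data $\{(\widetilde{I_\frl},k_\frl):\frl\in \mathcal{T}_0\}$ into Lemma \ref{lem:glue_local_parts} produces a fractional $S$-ideal $J$ with $J_\frl=(\widetilde{I_\frl})_\frl$ for every $\frl\in \mathcal{T}_0$, hence $[J]_{\mathcal{T}_0}$ maps to the prescribed tuple. For the $\mathcal{S}_0$ case, I would use Lemma \ref{lem:ell_vs_frl} to rewrite each factor $\cW_\ell(S)$ as the product of $\cW_\frl(S)$ over the maximal ideals $\frl$ of $S$ lying above $\ell$; this converts the given data into a tuple indexed by the finite set of all maximal ideals of $S$ above the primes in $\mathcal{S}_0$, and the same gluing argument then applies. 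The only mildly delicate point, rather than a genuine obstacle, is the scaling step needed to bring us into the setting of Lemma \ref{lem:glue_local_parts}; once that is arranged, the proposition falls out.
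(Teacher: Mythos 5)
Your proof is correct and takes essentially the same approach as the paper's, which states injectivity is "by construction" and surjectivity "follows from Lemma~\ref{lem:glue_local_parts}"; you simply spell out the scaling step and the reduction from $\ell$-data to $\frl$-data via Lemma~\ref{lem:ell_vs_frl} that the paper leaves implicit. One minor redundancy: in the $\mathcal{T}_0$ case, the appeal to Lemma~\ref{rmk:WRlocalize}\ref{rmk:WRlocalize:2} is unnecessary, since each class in $\cW_\frl(S)$ is by definition already represented by a fractional $S$-ideal, so no lifting from $\cW(S_\frl)$ is needed before scaling and gluing.
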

\begin{proof}
    The maps are injective by construction.
    Surjectivity follows from Lemma~\ref{lem:glue_local_parts}.
\end{proof}

\begin{prop}\label{prop:ell_to_prod_frl}
    Let $\ell$ be a maximal ideal of $Z$ and let $\frl_1,\ldots,\frl_n$ be the maximal ideals of~$S$ above $\ell$ and containing the conductor $\frf=(S:\OO)$. 
    We have a natural bijection
    \[ \varphi: \cW_\ell(S) \longrightarrow \prod_{i=1}^n \cW_{\frl_i}(S). \]
\end{prop}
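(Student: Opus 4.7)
The plan is to define $\varphi$ on representatives by $\varphi([I]_\ell) := ([I]_{\frl_1},\ldots,[I]_{\frl_n})$ and then to verify, in turn, well-definedness, injectivity, and surjectivity. Well-definedness is immediate from Lemma~\ref{lem:ell_vs_frl}: if $[I]_\ell = [J]_\ell$, then $I_\frl \simeq J_\frl$ as $S_\frl$-modules for every maximal ideal $\frl$ of $S$ above $\ell$, and in particular for each $\frl_i$.

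For injectivity, I would suppose $[I]_{\frl_i} = [J]_{\frl_i}$ for all $i=1,\ldots,n$ and, using the ``only if'' direction of Lemma~\ref{lem:ell_vs_frl}, reduce the problem to showing that $I_{\frl'} \simeq J_{\frl'}$ for any other maximal ideal $\frl'$ of $S$ above $\ell$. But by the choice of the $\frl_i$, such a prime $\frl'$ does not contain the conductor $\frf=(S:\OO)$, so $S_{\frl'} = \OO_{\frl'}$ (exactly the observation already used in the proof of Proposition~\ref{prop:S_to_S0}). Since $\OO_{\frl'}$ is a semilocal principal ideal ring, every full-rank fractional $S_{\frl'}$-ideal is free of rank one over $S_{\frl'}$, so $I_{\frl'} \simeq S_{\frl'} \simeq J_{\frl'}$ automatically, and Lemma~\ref{lem:ell_vs_frl} then gives $[I]_\ell = [J]_\ell$.

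For surjectivity, I would start from a tuple $([I_1]_{\frl_1},\ldots,[I_n]_{\frl_n})$. After scaling each representative $I_i$ by a suitable nonzero element of $Z$ (which preserves its $S_{\frl_i}$-isomorphism class), I may assume each $I_i$ is contained in $S$. Lemma~\ref{lem:glue_local_parts} then produces a fractional $S$-ideal $J\subseteq S$ with $J_{\frl_i}=(I_i)_{\frl_i}$ for each $i$ and $J_{\frl}=S_{\frl}$ for every other maximal ideal $\frl$ of $S$. By construction $\varphi([J]_\ell) = ([I_1]_{\frl_1},\ldots,[I_n]_{\frl_n})$. I do not expect any genuine obstacle: the statement is essentially a repackaging of Lemmas~\ref{lem:ell_vs_frl} and~\ref{lem:glue_local_parts}, combined with the (now familiar) fact that primes of $S$ above $\ell$ which do not meet the conductor contribute trivially to the local class structure.
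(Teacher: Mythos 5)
Your proof is correct and follows essentially the same route as the paper's: well-definedness and injectivity via Lemma~\ref{lem:ell_vs_frl} combined with the observation that $\cW_\frl(S)$ is trivial when $\frl$ does not divide the conductor (since $S_\frl=\OO_\frl$ is a principal ideal ring), and surjectivity via Lemma~\ref{lem:glue_local_parts}. The only addition you make is the explicit scaling step in the surjectivity argument, which the paper leaves implicit.
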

\begin{proof}
    As pointed out above, if $\frl$ is a maximal ideal of $S$ which does not divide the conductor, then $S_\frl = \OO_\frl$, which is a principal ideal ring.
    Hence, for every fractional $S$-ideal $I$, we have $I_\frl \simeq \OO_\frl$, or equivalently, $\cW_\frl(S)$ is trivial.
    Therefore $\varphi$ is injective by Lemma \ref{lem:ell_vs_frl}.
    Surjectivity follows from Lemma~\ref{lem:glue_local_parts}.
\end{proof}

The next proposition is \cite[Proposition~4.3]{Mar23_Loc}.
\begin{prop}\label{prop:global_to_frl}
    Let $\frl$ be a maximal ideal of $S$. 
    Let $k$ be a nonnegative integer such that $(\frl^k\OO)_\frl \subseteq S_\frl$.
    Then the natural map
    \[ \cW(S+\frl^k\OO) \longrightarrow \cW_\frl(S) \]
    is a bijection.
\end{prop}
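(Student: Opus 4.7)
The plan is to exploit the fact that the order $S':=S+\frl^k\OO$ agrees with $S$ at $\frl$ and with the maximal order $\OO$ at every other maximal ideal, reducing the claim to the bijections already established in this subsection.

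First I would analyze the local structure of $S'$. The hypothesis $(\frl^k\OO)_\frl\subseteq S_\frl$ immediately gives $S'_\frl = S_\frl + (\frl^k\OO)_\frl = S_\frl$. For any maximal ideal $\frl''\neq \frl$ of $S$ lying above the rational prime $\ell$ below $\frl$, the fact that $\frl\not\subseteq \frl''$ forces $\frl_{\frl''}=S_{\frl''}$, hence $(\frl^k\OO)_{\frl''}=\OO_{\frl''}$ and therefore $S'_{\frl''}=\OO_{\frl''}$. For any rational prime $\ell''\neq \ell$, the element $\ell\in\frl$ becomes a unit after tensoring with $\Z_{\ell''}$, so $\frl_{\ell''}=S_{\ell''}$ and again $S'_{\ell''}=\OO_{\ell''}$. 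I would conclude that $S'$ has a unique maximal ideal $\frl'$ contracting to $\frl$ in $S$, with $S'_{\frl'}=S_\frl$, while $S'_\frm=\OO_\frm$ at every other maximal ideal $\frm$ of $S'$.

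Second, this local picture means that the conductor of $S'$ in $\OO$ is supported only at $\frl'$. Applying Proposition~\ref{prop:S_to_S0} to $S'$ (with $\mathcal{T}=\{\frl'\}$) identifies $\cW(S')$ with $\cW_{\frl'}(S')$. Lemma~\ref{rmk:WRlocalize}\ref{rmk:WRlocalize:2} applied to $S'$ and to $S$ then yields natural bijections
\[ \cW(S') \longleftrightarrow \cW_{\frl'}(S') \longleftrightarrow \cW(S'_{\frl'}) = \cW(S_\frl) \longleftrightarrow \cW_\frl(S), \]
where the middle equality uses $S'_{\frl'}=S_\frl$.

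Finally, I would verify that the composition of these bijections agrees with the natural map in the statement, which sends the class of a fractional $S'$-ideal $I'$ to its $\frl$-adic class as a fractional $S$-ideal via $S\subseteq S'$. This reduces to the observation that the $S_\frl$-module $I'_\frl$ coincides with the $S'_{\frl'}$-module $I'_{\frl'}$, which is immediate from $S_\frl=S'_{\frl'}$.

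The main obstacle is the first step: the careful verification that $S'$ becomes maximal at every prime other than $\frl'$. Once this structural statement is in place, the remainder is a routine assembly of the earlier bijections in this subsection.
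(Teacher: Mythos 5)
Your proof is correct. Note that the paper itself does not prove this proposition but cites it from an external reference (\cite[Proposition~4.3]{Mar23_Loc_arXiv}), so there is no in-paper argument to compare against; what you have done is give a self-contained derivation from the surrounding results of Subsection~\ref{sec:isofrac}. The key structural observation — that $S' := S+\frl^k\OO$ satisfies $S'\otimes_S S_\frl = S_\frl$ (hence has a unique maximal ideal $\frl'$ over $\frl$ with $S'_{\frl'}=S_\frl$) and $S'\otimes_S S_\frm = \OO\otimes_S S_\frm$ at every other maximal ideal $\frm$ of $S$ — is correctly established, and from there Proposition~\ref{prop:S_to_S0} applied to $S'$ with $\mathcal{T}=\{\frl'\}$ and Lemma~\ref{rmk:WRlocalize}\ref{rmk:WRlocalize:2} applied to both $S'$ and $S$ assemble into the desired bijection. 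Two small points worth flagging explicitly rather than leaving implicit: (a) the passage from ``$S'\otimes_S S_\frl$ is local'' to ``there is a unique maximal ideal of $S'$ over $\frl$'' uses the standard decomposition of a finite algebra over a complete local ring into a product of complete local rings, and (b) the final compatibility check relies on the identity $I'\otimes_S S_\frl = I'\otimes_{S'}(S'\otimes_S S_\frl) = I'\otimes_{S'} S'_{\frl'}$, which is what makes the ``natural map'' agree with the composite bijection. Your write-up gestures at both; filling them in would make the argument fully rigorous, but the reasoning as laid out is sound.
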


\begin{remark}\label{rmk:push_primes_O_in_R}
    In the statement of Proposition~\ref{prop:global_to_frl}, 
    we can take $k = \val_\ell( [\OO:S] )$ to achieve $(\frl^{k}\OO)_{\frl} \subseteq S_{\frl}$, where $\val_\ell$
    denotes the $\ell$-adic valuation. 
    Note also that if $(\frl^{k}\OO)_{\frl} \subseteq S_{\frl}$ and $k'>k$ then $S+\frl^k\OO = S+\frl^{k'}\OO$.
\end{remark}

\subsection{Determining \texorpdfstring{$\prod_{\ell\neq p}\mathfrak{X}_{\pi,\ell}$}{Prl} } 
\label{sec:deterl}
As in~Section~\ref{sec-abvars}, we consider an isogeny class $\mathcal{A}_{\pi}$ 
of abelian varieties over~$\mathbb{F}_q$, where $q=p^a$ for some prime~$p$, with commutative endomorphism algebra $E = \mathbb{Q}[\pi]$. We now use the results of the previous subsection to give a concrete description of the set $\prod_{\ell\neq p}\mathfrak{X}_{\pi,\ell}$. 

\begin{thm}\label{thm:W_R_X_ell}
    Let $R=\Z[\pi,q/\pi]$.  
    \begin{enumerate}[(i)] 
        \item \label{thm:W_R_X_ell:one_ell} For each $\ell \neq p$, localization at~$\ell$ induces a bijection
        \[ \cW_\ell(R)\longrightarrow \mathfrak{X}_{\pi,\ell}.\]
        \item \label{thm:W_R_X_ell:all_ell} Denote by $\frl_1,\ldots,\frl_n$ the maximal ideals of $R$ which divide the conductor $(R:\OO)$ and do not contain~$p$.
        For each $i$, let $k_i$ be a nonnegative integer such that $(\frl_i^{k_i}\OO)_{\frl_i} \subseteq R_{\frl_i}$. 
        Then we have a bijection
        \[ \prod_{i=1}^n \cW(R+\frl_i^{k_i}\OO) \longrightarrow \prod_{\ell \neq p} \mathfrak{X}_{\pi,\ell}. \]
    \end{enumerate}    
\end{thm}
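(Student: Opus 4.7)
The plan for part~(i) is to identify both $\mathcal{W}_\ell(R)$ and $\mathfrak{X}_{\pi,\ell}$ with $\mathcal{W}(R_\ell)$. The first identification is given directly by Lemma~\ref{rmk:WRlocalize}. For the second, I would invoke Tate's theorem together with the fact that $h$ is square-free: since $E_\ell$ has dimension $2g = \dim_{\mathbb{Q}_\ell} V_\ell(X)$ over $\mathbb{Q}_\ell$ and $\End^0(X)\otimes_{\mathbb Q}\mathbb Q_\ell = E_\ell$ acts faithfully on $V_\ell(X)$, the module $V_\ell(X)$ must be free of rank one over the étale algebra $E_\ell$ (otherwise, decomposing along the primes of $E$ above $\ell$, the $E_\ell$-endomorphism algebra would properly contain $E_\ell$). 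Fixing any $E_\ell$-basis of $V_\ell(X)$ then gives a bijection between full-rank $R_\ell$-lattices in $V_\ell(X)$ and fractional $R_\ell$-ideals in $E_\ell$; a different choice of basis only rescales by an element of $E_\ell^\times$, so the induced map on $R_\ell$-isomorphism classes is canonical. Combining this with Remark~\ref{rmk:W_R_semilocal} identifies $\mathfrak{X}_{\pi,\ell}$ with $\mathcal{W}(R_\ell)$.

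For part~(ii), I plan to chain together the bijections established in Subsection~\ref{sec:isofrac} with part~(i). Taking $\mathcal{S}$ to be the set of primes $\ell \neq p$, the subset $\mathcal{S}_0$ consists exactly of the primes $\ell \neq p$ that divide the index $[\mathcal{O}:R]$. For any $\ell \notin \mathcal{S}_0 \cup \{p\}$ the equality $R_\ell = \mathcal{O}_\ell$, a product of DVRs, forces $\mathfrak{X}_{\pi,\ell}$ to be trivial, so
\[
\prod_{\ell \neq p} \mathfrak{X}_{\pi,\ell} \;=\; \prod_{\ell \in \mathcal{S}_0} \mathfrak{X}_{\pi,\ell}.
\]
Using part~(i), the right-hand side is identified with $\prod_{\ell \in \mathcal{S}_0} \mathcal{W}_\ell(R)$. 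I would then apply Proposition~\ref{prop:S0_to_prod_ell} to identify this product with $\mathcal{W}_{\mathcal{S}_0}(R)$, and Proposition~\ref{prop:ell_to_prod_frl} to refine each factor $\mathcal{W}_\ell(R)$ into $\prod_{\mathfrak{l}_i\mid\ell,\ \mathfrak{l}_i\mid (R:\mathcal{O})}\mathcal{W}_{\mathfrak{l}_i}(R)$; ranging over $\ell \in \mathcal{S}_0$ this collects precisely the $\mathfrak{l}_1,\ldots,\mathfrak{l}_n$ appearing in the statement of the theorem. Finally, Proposition~\ref{prop:global_to_frl} (applied with $S=R$ and each $\mathfrak{l}_i$, using the chosen exponents $k_i$) turns each $\mathcal{W}_{\mathfrak{l}_i}(R)$ into $\mathcal{W}(R+\mathfrak{l}_i^{k_i}\mathcal{O})$. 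Composing all these bijections yields the claimed bijection.

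The hard part is essentially contained in (i), namely the verification that in the square-free case $V_\ell(X)$ is free of rank one over $E_\ell$; once this structural fact is recorded, (ii) is a mechanical assembly of the previously established bijections. In particular, no new ideas beyond Tate's theorem and the purely commutative-algebra results of Subsection~\ref{sec:isofrac} should be required.
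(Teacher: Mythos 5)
Your proof is correct and follows essentially the same route as the paper's: for (i), both identify $\mathfrak{X}_{\pi,\ell}$ with fractional $R_\ell$-ideals in $E_\ell$ and then use Lemma~\ref{rmk:WRlocalize} for surjectivity (you spell out the verification that $V_\ell(X)$ is free of rank one over $E_\ell$, which the paper passes over tacitly; this is a worthwhile detail and your argument via Tate and square-freeness is sound), and for (ii) both chain together Propositions~\ref{prop:S_to_S0}/\ref{prop:S0_to_prod_ell}, \ref{prop:ell_to_prod_frl}, and \ref{prop:global_to_frl}. The only cosmetic redundancy is that you first collapse $\prod_{\ell\in\mathcal{S}_0}\cW_\ell(R)$ into $\cW_{\mathcal{S}_0}(R)$ and then re-expand factor by factor, when you could simply apply Propositions~\ref{prop:ell_to_prod_frl} and \ref{prop:global_to_frl} directly to each $\cW_\ell(R)$.
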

\begin{proof}
    Fix a prime $\ell\neq p$ and an abelian variety $X \in \mathcal{A}_{\pi}$.
    After identifying $V_\ell(X) = T_{\ell}(X) \otimes_{\mathbb{Z}_{\ell}} \mathbb{Q}_{\ell}$ with $E_\ell = \mathbb{Q}[\pi]_{\ell}$ we see that 
    \[ \mathfrak{X}_{\pi,\ell}=
    \{\text{fractional $R_\ell$-ideals in $E_\ell$ up to isomorphism as $R_{\ell}$-modules} \}.
    \]
    Hence, localization at~$\ell$ induces a natural injective map
    \[ \cW_\ell(R)\longrightarrow \mathfrak{X}_{\pi,\ell}. \]
    Surjectivity follows from the fact that every fractional $R_\ell$-ideal~$J$ is of the form $J=I\otimes_R R_\ell$ for some fractional $R$-ideal $I$, cf.~Lemma~\ref{rmk:WRlocalize}.(i). This completes the proof of \ref{thm:W_R_X_ell:one_ell}.

    Combining Propositions \ref{prop:S_to_S0} (applied with $\mathcal{S}$ consisting of all rational primes $\ell\neq p$), \ref{prop:ell_to_prod_frl}, and \ref{prop:global_to_frl}, we see that there is a  natural bijection from $\prod_{i=1}^n \cW(R+\frl_i^{k_i}\OO)$ to $\prod_{\ell \neq p} \cW_\ell(R)$.
    By applying \ref{thm:W_R_X_ell:one_ell} for each $\ell\neq p$, we obtain \ref{thm:W_R_X_ell:all_ell}.
\end{proof}

\section{Dieudonn\'e modules and where to find them} \label{sec:dieudonne}
In Subsections~\ref{ssec:setupXp} and \ref{sec:Xpip} we will find representatives of Dieudonn\'e modules as certain fractional $W_R$-ideals in $A=L \otimes_{\Q_p} E_p $, for $W_R = W \otimes_{\Z_p} R_p$ where $W$ is the maximal order in an unramified extension $L$ of~$\Q_p$ of degree $a$.
These fractional ideals should be stable under operators $F\colon A \to A$ and $V=pF^{-1}$, where~$F$ has the Frobenius property, see Definition~\ref{def:Frobprop}; they are called $W_R\{F,V\}$-ideals. 
Maps between $W_R\{F,V\}$-ideals will be given by elements from $E_p$ via the diagonal embedding $\Delta\colon E_p \to A$, see Lemma~\ref{lem-homs}. 
Theorem~\ref{thm:X_pi_p_WR_F_V} gives a bijection between isomorphism classes of Dieudonn\'e modules and $\Delta$-isomorphism classes of $W_R\{F,V\}$-ideals.

In Subsection~\ref{sec:connetale}, we study an analogue for $W_R\{F,V\}$-ideals of the connected-\'etale sequence, which gives a decomposition of any $W_R\{F,V\}$-ideal into an \'etale part, a multiplicative part and a local-local part. 
The \'etale and multiplicative parts are studied in Subsection~\ref{seq:etalemult}. 
A classification due to Waterhouse of all possible local-local parts with maximal endomorphism ring is given in Subsection~\ref{sec:locallocal}.

Later, in Section~\ref{sec:aftergreenredblue} we will create an effective algorithm to find global representatives of $\Delta$-isomorphism classes of $W_R\{F,V\}$-ideals.

\subsection{The Frobenius property}\label{ssec:setupXp}
Firstly, we introduce some notation, closely following that of Waterhouse~\cite[Section 5]{Wat69}. 
Let $q=p^a$ again be a power of a prime~$p$.
We consider an isogeny class $\mathcal{A}_\pi$ of abelian varieties over~$\F_q$ with commutative endomorphism algebra $E=\Q[\pi]$, or equivalently, of abelian varieties such that the characteristic polynomial of their Frobenius endomorphism $\pi$ is square-free. 
Let $R$ be the order of $E$ generated by the Frobenius and Verschiebung endomorphisms, that is, $R=\Z[\pi,q/\pi]$.

We are interested in the completions $E_p=E\otimes_\Q \Q_p$ of $E$, and $R_p=R\otimes_\Z \Z_p\subseteq E_p$ of $R$, at the rational prime $p$.
For each place $\nu$ of $E_p$, let $e_{\nu}$ denote the ramification index and $f_{\nu}$ the inertia degree of $E_{\nu}$ over $\mathbb{Q}_p$. Also let $n_{\nu}=e_{\nu}f_{\nu}=[E_{\nu}:\mathbb{Q}_p]$ and denote by $\pi_\nu$ the image of $\pi$ in $E_\nu$. Note that $E_p=\prod_{\nu \vert p} E_\nu$.

Let $L$ be the totally unramified extension of $\Q_p$ of degree $a$ and let $W=\mathcal{O}_L$ be the maximal $\Z_p$-order of $L$. 
We know that $L=\Q_p(\zeta_{q-1})$ and $W=\Z_p[\zeta_{q-1}]$, where $\zeta_{q-1}$ is a primitive $(q-1)$-st root of unity.

Observe that $L\otimes_{\Q_p}E_p$ is an etal\'e $\Q_p$-algebra, that is, it is isomorphic to a direct sum of finitely many finite extensions of $\Q_p$.
Indeed,
\begin{equation}\label{eq:splittingA}
     L\otimes_{\Q_p}E_p \simeq \prod_{\nu \vert p} \left( \underbrace{LE_\nu \times \ldots \times LE_\nu}_{g_\nu\text{ copies}} \right) =: A,
\end{equation}
where $g_\nu=\gcd(a,f_\nu)$.
Denote the $\nu$-component of $A$ by $A_\nu$. 
For an element $b = (b_{\nu})_{\nu | p}$ 
of $A = \prod_{\nu} A_{\nu}$ we denote by $b_{\nu,i}$ the $i$-th component of $b_{\nu}$ in $A_\nu=\prod_{j=1}^{g_{\nu}} LE_{\nu}$.
We see $E_\nu$ embeds diagonally into~$A_\nu$ for each~$\nu$. Hence we have an induced embedding
\[ \Delta: E_p \hookrightarrow A, \]
which endows $A$ with an $E_p$-algebra structure.

The isomorphism in Equation \eqref{eq:splittingA} restricted to a component $A_{\nu}$ is given on simple tensors $\omega\otimes\beta \in L \otimes_{\mathbb{Q}_p} E_p$ by
\begin{equation}\label{eq:wat_simple_tensors}
    \omega\otimes \beta \mapsto ( \omega\beta_\nu, \omega^{\sigma}\beta_\nu, \ldots, \omega^{\sigma^{g_\nu-1}}\beta_\nu ),
\end{equation}
where $\sigma$ is the Frobenius of $L$ over $\Q_p$.
This means that $A$ has an $L$-algebra structure, 
and the image of $\lambda\in L$ in each $A_\nu$ equals
\[ 
(\lambda,\lambda^{\sigma},\ldots,\lambda^{\sigma^{g_\nu-1}} ). \]
By slight abuse of notation, we will denote the maps induced by $\sigma$ on $A_\nu$ and $A$ also by $\sigma$. On $A_{\nu}$ the action is as follows:
\begin{equation}\label{eq:def_sigma}
\begin{split}
    ( \omega\beta_{\nu}, \omega^{\sigma}\beta_{\nu}, \ldots, \omega^{\sigma^{g_\nu-1}}\beta_{\nu} )
    & \mapsto
    ( \omega^{\sigma}\beta_{\nu}, \omega^{\sigma^2}\beta_{\nu}, \ldots, \omega^{\sigma^{g_\nu}}\beta_{\nu}).
\end{split}
\end{equation}
Since $\omega^{\sigma^{g_\nu}}\beta_{\nu}=(\omega\beta_{\nu})^{\tau_{\nu}}$, where $\tau_{\nu}$ is the Frobenius of $LE_\nu$ over $E_\nu$, we see that~$\sigma$ acts on each $A_\nu$ as a cyclic permutation followed by $\tau_{\nu}$ in the last component. 

\begin{df}\label{def:Frobprop} 
A map $F_\nu\colon A_\nu \to A_\nu$ will be said to have the \emph{Frobenius property} if it is additive and satisfies $F_\nu\lambda = \lambda^\sigma F_\nu$ for all $\lambda\in L$ and $F_\nu^a = \Delta|_{E_\nu}(\pi_\nu) \in A_\nu $.

Given a set $\mathcal{S}$ of places $\nu$ of $E_p$, a map $F\colon \prod_{\nu \in \mathcal{S}} A_{\nu}\to A_{\nu}$ will be said to have the Frobenius property if $F|_{A_\nu}$ has the Frobenius property for all places $\nu$ of $S$.
\end{df}
    
\begin{lem}\label{lem:norm_sigma}
    For every $\alpha_{\nu}=(\alpha_{\nu,1},\ldots,\alpha_{\nu,g_{\nu}}) \in A_\nu$ we have
    \[ \alpha_{\nu}\cdot\alpha_{\nu}^\sigma\cdots\alpha_{\nu}^{\sigma^{a-1}} = (\beta_{\nu},\ldots,\beta_{\nu})\in A_\nu, \]
    where
    \[ \beta_{\nu} = N_{LE_\nu/E_\nu}(\alpha_{\nu,1}\cdots \alpha_{\nu,g_\nu}) \in E_\nu. \]
\end{lem}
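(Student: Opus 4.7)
The plan is to extract the statement from an explicit coordinate computation, using the description of $\sigma$ on $A_\nu$ given in Equation~\eqref{eq:def_sigma}. That formula says that $\sigma$ acts on $A_\nu$ by cyclically shifting the $g_\nu$ entries one position to the left and applying the Frobenius $\tau_\nu$ of $LE_\nu/E_\nu$ to the entry that wraps around. So I would first introduce the periodic extension of the indexing by declaring $\alpha_{\nu,j+g_\nu} := \alpha_{\nu,j}^{\tau_\nu}$ for all $j \in \mathbb{Z}$. With this convention, a short induction on $k$ shows that the $i$-th coordinate of $\sigma^k(\alpha_\nu)$ is $\alpha_{\nu,i+k}$; in particular the first coordinate is $\alpha_{\nu,1+k}$.

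Taking the first coordinate of the product $\alpha_\nu \cdot \alpha_\nu^\sigma \cdots \alpha_\nu^{\sigma^{a-1}}$ then gives $\prod_{k=1}^{a}\alpha_{\nu,k}$. Since $\tau_\nu$ generates $\mathrm{Gal}(LE_\nu/E_\nu)$ of order $a/g_\nu$, I would group this product by residue class modulo $g_\nu$ to rewrite it as
\[ \prod_{r=1}^{g_\nu}\prod_{m=0}^{a/g_\nu-1} \alpha_{\nu,r}^{\tau_\nu^m} = \prod_{r=1}^{g_\nu} N_{LE_\nu/E_\nu}(\alpha_{\nu,r}) = N_{LE_\nu/E_\nu}(\alpha_{\nu,1}\cdots\alpha_{\nu,g_\nu}) = \beta_\nu, \]
where the last step uses multiplicativity of the norm.

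Finally, I would show that the product is the diagonal element $(\beta_\nu,\ldots,\beta_\nu)$ rather than merely an element with first coordinate $\beta_\nu$. Since $\tau_\nu$ has order $a/g_\nu$, the periodic extension gives $\sigma^a = \mathrm{id}$ on $A_\nu$, so
\[ \sigma\bigl(\alpha_\nu\alpha_\nu^\sigma\cdots\alpha_\nu^{\sigma^{a-1}}\bigr) = \alpha_\nu^\sigma\alpha_\nu^{\sigma^2}\cdots\alpha_\nu^{\sigma^a} = \alpha_\nu\alpha_\nu^\sigma\cdots\alpha_\nu^{\sigma^{a-1}}, \]
so the product is $\sigma$-invariant. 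The $\sigma$-fixed elements of $A_\nu$ are exactly the image of the diagonal embedding of $E_\nu$ (invariance under the cyclic shift forces constant tuples, and invariance of each entry under $\tau_\nu$ forces the value to lie in $(LE_\nu)^{\tau_\nu}=E_\nu$). Combining this with the first-coordinate calculation identifies the product as $(\beta_\nu,\ldots,\beta_\nu)$ with $\beta_\nu \in E_\nu$ as claimed. There is no real obstacle here beyond tracking indices carefully; the whole proof is a direct unwinding of the definitions of $\sigma$ and of the norm.
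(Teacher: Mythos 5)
Your proof is correct. It differs mildly from the paper's argument: the paper computes the product $\prod_{k=0}^{a-1}\sigma^k(\alpha_\nu)$ coordinate by coordinate, explicitly writing out $\sigma^{kg+i}(\alpha_\nu)$ for all $0\le k\le a/g_\nu-1$ and $0\le i\le g_\nu-1$ and then multiplying the resulting tuples entry-wise to verify that every slot equals $N(\alpha_{\nu,1}\cdots\alpha_{\nu,g_\nu})$. You instead compute only the first coordinate, after introducing the clean periodic indexing $\alpha_{\nu,j+g_\nu}=\alpha_{\nu,j}^{\tau_\nu}$, and then invoke the observation that the product is $\sigma$-fixed (because $\sigma^a=\mathrm{id}$ on $A_\nu$), hence lies in the diagonal copy of $E_\nu$. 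This saves you the multi-coordinate bookkeeping that dominates the paper's proof, at the cost of the small extra lemma that the $\sigma$-fixed subring of $A_\nu$ is exactly $\Delta|_{E_\nu}(E_\nu)$ — a fact that the paper also needs elsewhere (e.g.\ in the proof of Lemma~\ref{lem:quot_stab_sigma}), so your shortcut is well within the spirit of the surrounding development.
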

\begin{proof}
    For ease of notation, let $g=g_\nu$, $\alpha = \alpha_{\nu}$ and $N=N_{LE_\nu/E_\nu}$, and write $\tau = \tau_{\nu}$ for the Frobenius of $LE_\nu$ over $E_\nu$. As above, we see $\tau$ has order $a/g$ and satisfies $\sigma^g(\alpha)=(\tau(\alpha_1),\ldots,\tau(\alpha_g))$.
    For each $0\leq k \leq a/g -1$ and each $0\leq i \leq g-1$, we have
    \[ \sigma^{kg+i} (\alpha) = (\tau^k(\alpha_{i+1}),\ldots,\tau^k(\alpha_g),\tau^{k+1}(\alpha_1),\ldots,\tau^{k+1}(\alpha_i)). \]
    Hence
    \begin{align*}
        \alpha\cdot\sigma(\alpha)\cdots\sigma^{a-1}(\alpha) 
        & = \prod_{k=0}^{\frac{a}{g}-1}\prod_{i=0}^{g-1} \sigma^{kg+i} (\alpha) \\
        & = \prod_{i=0}^{g-1} \left( \prod_{k=0}^{\frac{a}{g}-1} \tau^k(\alpha_{i+1}),\ldots,\prod_{k=0}^{\frac{a}{g}-1} \tau^k(\alpha_{g}),\prod_{k=0}^{\frac{a}{g}-1} \tau^{k+1}(\alpha_{1}),\ldots,\prod_{k=0}^{\frac{a}{g}-1} \tau^{k+1}(\alpha_{i})  \right)\\
        & = \prod_{i=0}^{g-1} \left( N(\alpha_{i+1}),\ldots,N(\alpha_g),N(\alpha_1),\ldots,N(\alpha_i)  \right)\\
        & = \left(N(\alpha_1\cdots \alpha_g),\ldots,N(\alpha_1\cdots \alpha_g) \right),
    \end{align*}
    as required. 
\end{proof}

\begin{lem} \label{lem-F} 
    Let $\alpha_\nu \in A_\nu$ be such that
    \[ \alpha_\nu\cdot\alpha_\nu^\sigma\cdots\alpha_\nu^{\sigma^{a-1}} = (\pi_\nu,\ldots,\pi_\nu)\in A_\nu. \quad \quad (*)\]
    For $z\in A_\nu$, define $F_\nu(z) = \alpha_\nu\cdot z^\sigma$, 
    i.e.~$F_\nu=\alpha_\nu \circ \sigma$, and extend this component-wise to obtain a map $F$ on $A$. 
    Then $F$ has the Frobenius property.
\end{lem}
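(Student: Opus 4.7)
The plan is to verify the three conditions in Definition~\ref{def:Frobprop} for each component map $F_\nu = \alpha_\nu \circ \sigma$, as the Frobenius property of $F$ on $A$ is defined componentwise. Additivity is immediate: $F_\nu$ is the composition of the additive map $\sigma$ with multiplication by $\alpha_\nu$ in the commutative $\Q_p$-algebra $A_\nu$. For the semilinearity relation, a direct computation gives
\[ F_\nu(\lambda z) = \alpha_\nu (\lambda z)^\sigma = \alpha_\nu \lambda^\sigma z^\sigma = \lambda^\sigma F_\nu(z) \]
for every $\lambda \in L$ and $z \in A_\nu$, using commutativity of $A_\nu$ together with the fact that $\sigma$ is a ring endomorphism.

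The main step is the identity $F_\nu^a = \Delta|_{E_\nu}(\pi_\nu)$. I would first prove by induction on $k \ge 1$ the closed form
\[ F_\nu^k(z) = \alpha_\nu \, \alpha_\nu^\sigma \cdots \alpha_\nu^{\sigma^{k-1}} \, z^{\sigma^k}, \]
whose inductive step is $F_\nu^{k+1}(z) = \alpha_\nu \cdot \sigma(F_\nu^k(z))$, with $\sigma$ multiplicatively distributing across the previously computed product. Setting $k=a$ and invoking hypothesis $(*)$ yields
\[ F_\nu^a(z) = \Delta|_{E_\nu}(\pi_\nu) \cdot z^{\sigma^a}. \]

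It therefore remains to show that $\sigma^a$ acts trivially on $A_\nu$. From the explicit description in~\eqref{eq:def_sigma}, $\sigma^{g_\nu}$ acts on each factor $LE_\nu$ of $A_\nu = (LE_\nu)^{g_\nu}$ as the Frobenius $\tau_\nu$ of $LE_\nu$ over $E_\nu$, which has order $a/g_\nu$; hence $\sigma^a = (\sigma^{g_\nu})^{a/g_\nu}$ is the identity on $A_\nu$, and $F_\nu^a(z) = \Delta|_{E_\nu}(\pi_\nu) \cdot z$. I do not anticipate any real obstacle: the verification is essentially bookkeeping, with the only mild care being to track the interplay between the cyclic permutation of the $g_\nu$ factors and the action of $\tau_\nu$ after $g_\nu$ steps, both of which are already packaged in the setup of Subsection~\ref{ssec:setupXp} (and consistent with Lemma~\ref{lem:norm_sigma}, which characterizes when $(*)$ can hold).
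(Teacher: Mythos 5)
Your proof is correct, and it is a genuinely direct argument carried out in the splitting $A_\nu=\prod_{j=1}^{g_\nu}LE_\nu$ of~\eqref{eq:splittingA}, using the explicit description~\eqref{eq:def_sigma} of $\sigma$ as a cyclic shift followed by $\tau_\nu$: semilinearity follows because $\sigma$ is a ring automorphism of $A_\nu$ restricting to the Frobenius on (the image of) $L$, the induction gives $F_\nu^k=\bigl(\alpha_\nu\alpha_\nu^\sigma\cdots\alpha_\nu^{\sigma^{k-1}}\bigr)\circ\sigma^k$, and then $(*)$ plus $\sigma^a=\mathrm{id}$ (via $\sigma^{g_\nu}=\tau_\nu$ componentwise and $\tau_\nu$ of order $a/g_\nu$) yields $F_\nu^a=\Delta|_{E_\nu}(\pi_\nu)$. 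The paper proceeds differently in one respect: rather than working with the product presentation and the shift-plus-$\tau_\nu$ bookkeeping, it changes presentation to $A_\nu\simeq\prod_i L[x]/f_i(x)$, where the $f_i$ are the irreducible factors over $L[x]$ of the minimal polynomial of $\pi_\nu$, so that $\sigma$ acts only on the $L$-coefficients and fixes the class of $x$; in that model the semilinearity identity and the relation $F_\nu^a=\varphi(\pi_\nu)$ become coefficientwise computations with no permutation to track. Your route buys concreteness and stays entirely within the setup of Subsection~\ref{ssec:setupXp} (and is consistent with Lemma~\ref{lem:norm_sigma}, whose proof uses exactly the fact $\sigma^{g_\nu}=\tau_\nu$ componentwise that you invoke); the paper's route buys a cleaner $\sigma$-action at the cost of introducing the auxiliary factorization. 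Both hinge on the same two points — the twisted product formula for $F_\nu^a$ and $\sigma^a=\mathrm{id}$ — so the difference is one of presentation rather than of idea, and no step in your argument is missing or incorrect.
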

\begin{proof}
    It is enough to check the desired properties on each component $A_\nu$.
    Rather than doing so using the isomorphism given by Equation \eqref{eq:wat_simple_tensors}, we use a different presentation of $A_\nu$.
    Namely, let $m_{\nu}(x)$
    be the minimal polynomial of $\pi_\nu$ over $\Q_p$ and write 
    \[ m_\nu(x) = f_1(x)\cdots f_r(x) \]
    for its factorization into irreducible factors over $L[x]$.
    Then we have an $L$-algebra isomorphism 
    \[ \varphi: A_\nu \overset{\sim}{\longrightarrow} \prod_{i=1}^r \frac{L[x]}{f_i(x)}. \]
    The action of $\sigma$ on $A_\nu$ in this presentation is 
    induced by the automorphism of $L[x]$ sending $s(x) = \sum_{k} a_k x^k$ to $s(x)^\sigma = \sum_k a_i^\sigma x^k$.
    
    For $z\in A_\nu$, we have $\varphi(z)=([s_{z,1}(x)],\ldots,[s_{z,r}(x)])$ for some $s_{z,i}(x) \in L[x]$, where $[s_{z,i}(x)]$ denotes the class of $s_{z,i}(x)$ in the quotient $L[x]/f_i(x)$.  
    For any $\lambda\in L$, 
    \[
    \varphi(F_\nu(\lambda z))=([s_{\alpha_\nu,i}(x) s_{(\lambda z)^\sigma,i}(x)],\ldots)=([\lambda^\sigma s_{\alpha_\nu,i}(x) s_{z,i}(x)^\sigma],\ldots)=\varphi(\lambda^\sigma F_\nu(z)), 
    \]
    which shows that $F_\nu\lambda = \lambda^\sigma F_\nu$, as required.
    Finally, we have by construction that 
    \[ s_{\alpha_\nu,i}(x)\cdot s_{\alpha_\nu,i}(x)^\sigma\cdots s_{\alpha_\nu,i}(x)^{\sigma^{a-1}} s_{z,i}(x) - x \in (f_i(x)), \]
    for $i=1,\ldots,r$. This shows that $\varphi(F_\nu^a) = \varphi(\pi_\nu)$, completing the proof. 
\end{proof}

In \cite[p.~544]{Wat69} there is the following construction of an element $\alpha_{\nu} \in A_{\nu}$ enjoying property~$(*)$ of Lemma~\ref{lem-F}: Since the degree $a/g_{\nu}$ of the unramified extension $LE_\nu/E_\nu$ divides the $\nu$-adic valuation $\val_{\nu}(\pi_{\nu})$, it follows that $\pi_{\nu}$ is the norm of an element $u_\nu\in LE_\nu$.
We can then put $\alpha_\nu=(1,1,\ldots,1,u_\nu)\in A_\nu$.  
The following definition will reappear later in Proposition~\ref{prop:FVstable_OO} and in  Algorithm~\ref{alg:FVclasses}.

\begin{df}\label{def:F-Waterhouse}
If $F_\nu\colon A_\nu \to A_\nu$ is a map with the Frobenius property defined by $F_\nu(z) = \alpha_\nu\cdot z^\sigma$ for an element $\alpha_\nu=(1,1,\ldots,1,u_\nu)\in A_\nu$ then we will say that $F_\nu$ is of \emph{$W$-type}.   

Given a set $\mathcal{S}$ of places $\nu$ of $E_p$, a map $F\colon \prod_{\nu \in \mathcal{S}} A_{\nu}\to A_{\nu}$ will be said to be of $W$-type if $F|_{A_\nu}$ is of $W$-type for all places $\nu$ of $S$.
\end{df}

\subsection{Determining \texorpdfstring{$\mathfrak{X}_{\pi,p}$}{Prp}} \label{sec:Xpip}
In this section we describe $\mathfrak{X}_{\pi,p}$ in terms of isomorphism classes of $W_R\{F,V\}$-ideals, see Theorem~\ref{thm:X_pi_p_WR_F_V}.

\begin{df}
Put $W_R=W\otimes_{\Z_p}R_p$ which is a $\Z_p$-order inside $A$.
Let $\alpha \in A$ be an element such that $F=\alpha\circ\sigma$ has the Frobenius property; this implies that~$F$ is bijective on~$A$. Put $V=p F^{-1}$.
By $W_R\{F,V\}$-ideals, we will mean fractional $W_R$-ideals inside $A$ which are stable under the action of $F$ and $V$. 
Given two $W_R\{F,V\}$-ideals
$J_1,J_2$, let:
\begin{itemize}
    \item $\Hom_{W_R\{F,V\}}(J_1,J_2)$ consist of the $W_R$-linear morphisms from $J_1$ to $J_2$ that commute with the action of $F$ and $V$.
    \item $\Hom_{\Delta}(J_1,J_2) = \{ x \in E_p : \Delta(x)J_1\subseteq J_2\}$.
\end{itemize}
\end{df}

Note that $W_R$ does not in general respect the splitting of $A$ given by Equation \eqref{eq:splittingA}, that is, it cannot be written as a direct sum of components, each lying inside one~$A_\nu$.
On the other hand, the maximal order~$\OO_A$ of $A$, which is the image of $W\otimes \OO_{E_p}$ under~\eqref{eq:splittingA}, does respect the splitting.\\

The following lemma can be seen as a version of Tate’s isogeny theorem (see \cite[Theorem A.1.1.1]{chaiconradoort14}) in this setting. 

\begin{lem} \label{lem-homs}
    Let $J_1$ and $J_2$ be any two fractional $W_R$-ideals. 
    Then $\Delta$ induces a bijection
    \[
    \Hom_{W_R\{F,V\}}(J_1,J_2) 
    \longleftrightarrow \Hom_{\Delta}(J_1,J_2),
    \]
    which is in fact an isomorphism of $R_p$-modules.
\end{lem}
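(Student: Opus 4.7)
The plan is to show that sending $x \in \Hom_\Delta(J_1,J_2)$ to multiplication by $\Delta(x)$ gives the desired bijection, and that it is manifestly $R_p$-linear. First I would check well-definedness: for $x \in E_p$ with $\Delta(x)J_1 \subseteq J_2$, multiplication by $\Delta(x)$ on $J_1$ is $W_R$-linear since $A$ is commutative; it commutes with $F = \alpha \circ \sigma$ because $\sigma$ acts only on the $L$-factor of $A = L \otimes_{\Q_p} E_p$ and therefore fixes $\Delta(E_p)$ pointwise, so
\[
F(\Delta(x) j) \;=\; \alpha\,\sigma(\Delta(x))\,\sigma(j) \;=\; \Delta(x)\,\alpha\,\sigma(j) \;=\; \Delta(x)\,F(j);
\]
commutation with $V = pF^{-1}$ is then automatic. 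The $R_p$-module structures on both sides are given by multiplication via $\Delta$, and the map visibly intertwines them.

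Injectivity is quick: every fractional $W_R$-ideal spans $A$ over $\Q_p$, so multiplication by a nonzero element of $A$ is nonzero on $J_1$, and $\Delta:E_p\hookrightarrow A$ is already injective.

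The heart of the argument is surjectivity. Given $\varphi \in \Hom_{W_R\{F,V\}}(J_1,J_2)$, I would extend $\varphi$ by $\Q_p$-linearity to a map $\varphi_A: A \to A$. Since $W_R \otimes_{\Z_p} \Q_p = A$, the extension $\varphi_A$ is $A$-linear, and thus equals multiplication by some $\beta \in A$ (evaluate at $1 \in A$). Applying the compatibility $\varphi \circ F = F \circ \varphi$ to any $j \in J_1$ yields
\[
\beta\,\alpha\,\sigma(j) \;=\; \alpha\,\sigma(\beta)\,\sigma(j);
\]
since $\sigma(J_1)$ spans $A$ over $\Q_p$ and $\alpha$ is a unit in $A$ (because $F$ is bijective), this forces $\sigma(\beta) = \beta$ in $A$.

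The main technical step I anticipate is identifying the $\sigma$-fixed subalgebra $A^{\sigma}$ with exactly $\Delta(E_p)$. Using the explicit description $A_\nu = (LE_\nu)^{g_\nu}$ from \eqref{eq:splittingA} together with the formula for $\sigma$ in \eqref{eq:def_sigma}, which acts on $A_\nu$ as a cyclic permutation followed by $\tau_\nu = \mathrm{Frob}(LE_\nu/E_\nu)$ on the wrap-around component, a fixed tuple $(b_1,\ldots,b_{g_\nu})$ must satisfy $b_1 = \cdots = b_{g_\nu} = b$ with $\tau_\nu(b) = b$, i.e., $b \in E_\nu$. Taking the product over all places $\nu \mid p$ gives $A^{\sigma} = \Delta(E_p)$, so $\beta = \Delta(x)$ for a unique $x \in E_p$; the inclusion $\beta J_1 \subseteq J_2$ then reads $x \in \Hom_\Delta(J_1,J_2)$, finishing surjectivity and hence the proof.
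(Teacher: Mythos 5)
Your proof is correct and follows essentially the same strategy as the paper's: extend $\varphi$ to an $A$-linear endomorphism given by multiplication by $\beta \in A$, use commutation with $F = \alpha\circ\sigma$ (and invertibility of $\alpha$) to deduce $\sigma(\beta)=\beta$, and identify the $\sigma$-fixed subalgebra of $A_\nu$ with $\Delta(E_\nu)$ via the cyclic-shift-plus-$\tau_\nu$ description of $\sigma$. The paper extracts the same component equations $y_{\nu,1}=\cdots=y_{\nu,g_\nu}$, $\tau(y_{\nu,1})=y_{\nu,g_\nu}$ by evaluating the commutation relation on a specific element $\Delta(n)\in J_{1,\nu}$ instead of appealing to spanning; the substance is the same.
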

\begin{proof}
    The map $\Delta$ gives an injection of $\Hom_{\Delta}(J_1,J_2)$ into $\Hom_{W_R\{F,V\}}(J_1,J_2)$.
    We now show that this map is also surjective.
    
    Since every $W_R$-linear morphism~$\varphi\colon J_1 \to J_2$ determines a unique $A$-linear endomorphism of $A$, also denoted by $\varphi$, we get that $\varphi$ is actually multiplication by some element $y = (y_\nu)_\nu \in A$, that is, $\varphi(x)=yx$ for every $x \in A$. 
    
    We restrict ourselves to the component $A_\nu$ of $A$. We want to show that since~$y_{\nu} \in A_{\nu}$ commutes with~$F_\nu$, it belongs to $\Delta(E_\nu)$.  
    Put $J_{1,\nu}=J_1 \cap A_\nu$.
    Commuting with~$F_\nu$ means that for every $z \in J_{1,\nu}$ we have
    \[ F_\nu(y_{\nu}z) = y_{\nu}F_\nu(z). \]
    Since $J_{1,\nu}$ and $\OO_{A_\nu}$ are $\Z_p$-lattices of the same rank, the quotient $(J_{1,\nu} + \OO_{A_\nu})/J_{1,\nu}$ is a finite abelian group, say of exponent $n$. 
    Then $\Delta_{|_{E_\nu}}(n)(J_{1,\nu} + \OO_{A_\nu}) \subseteq J_{1,\nu}$, which implies that $\Delta_{|_{E_\nu}}(n)\in J_{1,\nu}$ since $1\in J_{1,\nu} + \OO_{A_\nu}$.
    We have 
    \[
    F(y_{\nu} \Delta_{|_{E_\nu}}(n))=\alpha \, \bigl(y_{\nu,2},y_{\nu,3},\ldots,y_{\nu,g_{\nu}},\tau(y_{\nu,1}) \bigr) \Delta_{|_{E_\nu}}(n)
    \]
    and
    \[
    y_{\nu} F(\Delta_{|_{E_\nu}}(n))=\alpha \,\bigl(y_{\nu,1},y_{\nu,2},\ldots,y_{\nu,g_{\nu}-1}, y_{\nu,g_{\nu}} \bigr) \Delta_{|_{E_\nu}}(n)
    \]
    with $\alpha \neq 0$, which shows that $y_{\nu,1}=\ldots=y_{\nu,g_{\nu}}$ and $y_{\nu,g_{\nu}}=\tau(y_{\nu,1})$ so $y_{\nu} \in \Delta_{|_{E_\nu}}(E_{\nu})$.
\end{proof}

\begin{df}
    Two fractional $W_R$-ideals will be called \emph{$\Delta$-isomorphic} if they are isomorphic as $R_p$-modules via multiplication by an invertible element of $\Delta(E_p)$. 
\end{df} 

\begin{cor}\label{cor:FV_on_classes}
    Two fractional $W_R\{F,V\}$-ideals are isomorphic precisely if they are $\Delta$-isomorphic. 
    Moreover, being stable under $F$ and $V$ is a well-defined notion on $\Delta$-isomorphism classes.
\end{cor}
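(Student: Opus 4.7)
I plan to deduce both statements directly from Lemma~\ref{lem-homs}. For the first assertion, suppose $J_1$ and $J_2$ are $W_R\{F,V\}$-ideals and $\varphi: J_1 \to J_2$ is a $W_R\{F,V\}$-module isomorphism. Applying Lemma~\ref{lem-homs} to $\varphi$ and its inverse produces elements $x, y \in E_p$ with $\varphi(z) = \Delta(x)z$ and $\varphi^{-1}(z) = \Delta(y)z$. Composing gives $\Delta(xy) = 1_A$, which forces $xy = 1$ since $\Delta$ is injective. Hence $x \in E_p^{\times}$ and $J_2 = \Delta(x)J_1$, so $J_1$ and $J_2$ are $\Delta$-isomorphic.

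For the converse, I must show that if $J_2 = \Delta(x)J_1$ for $x \in E_p^{\times}$, then multiplication by $\Delta(x)$ is a $W_R\{F,V\}$-module isomorphism $J_1 \to J_2$. The only non-trivial point is that $\Delta(x)$ commutes with $F$ (and hence with $V = pF^{-1}$). I would verify this one component $A_\nu$ at a time. Under the identification of Equation~\eqref{eq:wat_simple_tensors}, elements of $\Delta(E_\nu)$ are the constant tuples $(\beta_\nu,\ldots,\beta_\nu)$. Since $\sigma$ acts on $A_\nu$ by a cyclic permutation followed by the Frobenius $\tau_\nu$ of $LE_\nu/E_\nu$ in the last slot, and since $\tau_\nu$ fixes $E_\nu$ pointwise, we have $\Delta(\beta)^\sigma = \Delta(\beta)$. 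Then $F(\Delta(\beta)z) = \alpha \cdot (\Delta(\beta)z)^\sigma = \alpha\,\Delta(\beta)\,z^\sigma = \Delta(\beta)F(z)$, giving the desired commutation.

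The ``moreover'' part is then immediate from the same commutation: if $J_1$ is $F,V$-stable and $J_2 = \Delta(x)J_1$ with $x \in E_p^{\times}$, then $F(J_2) = \Delta(x)F(J_1) \subseteq \Delta(x)J_1 = J_2$, and likewise for $V$; the reverse inclusion follows by symmetry (applied to $x^{-1}$). So $F,V$-stability depends only on the $\Delta$-isomorphism class. The only substantive obstacle in the whole argument is verifying that $\Delta(E_p)$ lies in the centraliser of $F$ in $A$, which reduces — as above — to the observation that $\tau_\nu$ acts trivially on $E_\nu$; once this is in hand, both statements fall out formally from Lemma~\ref{lem-homs}.
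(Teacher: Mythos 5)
Your proof is correct and follows essentially the same route as the paper, which just invokes Lemma~\ref{lem-homs} for the first claim and the observation that $F$ and $V$ commute with multiplication by $\Delta(E_p)$ (equivalently, $\Delta(E_p)$ is $\sigma$-fixed) for the second. Your extra step of explicitly verifying $\Delta(\beta)^\sigma=\Delta(\beta)$ via the cyclic-permutation-plus-$\tau_\nu$ description is a welcome unpacking, but it is already the content of the first line of the proof of Lemma~\ref{lem-homs}, so you could have cited that lemma directly for the converse direction as well.
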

\begin{proof}
    The first statement is a direct application of Lemma \ref{lem-homs}.
    The second statement follows from the fact that $F$ and $V$ act trivially on the elements of $\Delta(E_p)$.
\end{proof}

From now on, we will use Lemma \ref{lem-homs} to identify $\Hom_{W_R\{F,V\}}(J_1,J_2)$ with the corresponding subset of $E_p$.

\begin{thm}\label{thm:X_pi_p_WR_F_V}
    There is a bijection between $\mathfrak{X}_{\pi,p}$ and the $\Delta$-isomorphism classes of $W_R\{F,V\}$-ideals. 
\end{thm}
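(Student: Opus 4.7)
The plan is to construct the bijection by trivializing the rational Dieudonn\'e module and transporting structure to~$A$.

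First I will show that $M_\Q := M(X) \otimes_{\Z_p} \Q_p$ is free of rank one over~$A$. Here $L$ acts on $M_\Q$ via $W \hookrightarrow \End_{\Z_p}(M(X))$ and $E_p$ acts via the Frobenius endomorphism~$\pi$; these commuting actions combine to make $M_\Q$ an $A$-module. A dimension count yields $\dim_{\Q_p} M_\Q = 2ga = \dim_{\Q_p} A$, and since the $E_p$-action is faithful (because $h$ is square-free) and $A$ is \'etale, the Dieudonn\'e--Manin decomposition of~$M_\Q$ matches~\eqref{eq:splittingA} factor by factor, yielding the required freeness.

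Next, fix an $A$-linear isomorphism $\psi: M_\Q \xrightarrow{\sim} A$. Under~$\psi$, full-rank $W$-lattices in $M_\Q$ correspond to full-rank $W_R$-lattices in~$A$, i.e.~fractional $W_R$-ideals (the $R_p$-stability is automatic from $\pi = F^a$ and $q/\pi = V^a$). The Dieudonn\'e operator $F_M$ transports to a map $F_\psi$ on~$A$ which is $\sigma$-semilinear over~$L$ and satisfies $F_\psi^a = \Delta(\pi)$, so $F_\psi$ has the Frobenius property of Definition~\ref{def:Frobprop}. Both $F_\psi$ and the fixed~$F$ are of the form $\alpha\cdot\sigma$ with $\alpha\in A^\times$ satisfying condition~$(*)$ of Lemma~\ref{lem-F}. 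Replacing~$\psi$ by $\mu\cdot\psi$ for some $\mu\in A^\times$ changes $F_\psi$ into $(\mu/\mu^\sigma) F_\psi$, so aligning $F_\psi$ with~$F$ amounts to solving $\mu/\mu^\sigma = F/F_\psi$ in~$A^\times$. The cocycle condition $\prod_{i=0}^{a-1}(F/F_\psi)^{\sigma^i} = 1$ holds because both $F$ and $F_\psi$ satisfy~$(*)$ with the same value $\Delta(\pi)$, and solvability then follows from Hilbert~90 for the cyclic action of~$\sigma$, applied componentwise via~\eqref{eq:splittingA}. After this adjustment, $\psi$ identifies the Dieudonn\'e $F,V$-structure on $M_\Q$ with the fixed $F,V$-structure on~$A$.

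Consequently, a full-rank $W$-lattice $N \subseteq M_\Q$ is a Dieudonn\'e submodule if and only if $\psi(N)$ is a $W_R\{F,V\}$-ideal in~$A$, and an isomorphism $N_1 \xrightarrow{\sim} N_2$ of Dieudonn\'e submodules becomes, via~$\psi$, a $W_R$-linear $F,V$-commuting map between fractional $W_R$-ideals in~$A$; by Lemma~\ref{lem-homs} any such map is multiplication by an element of $\Delta(E_p)^\times$, that is, a $\Delta$-isomorphism. The converse direction is immediate, yielding the desired bijection. The main obstacle will be the cocycle adjustment aligning $F_\psi$ with~$F$: although the global cocycle condition is tautological from~$(*)$, its resolution requires working component by component on~\eqref{eq:splittingA}, tracking the $\sigma$-action through~\eqref{eq:wat_simple_tensors} and invoking Hilbert~90 (or Shapiro's lemma reducing to it) for each unramified extension $LE_\nu/E_\nu$.
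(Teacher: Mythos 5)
Your proof follows the same overall strategy as the paper's: identify $M_\Q := M(X)\otimes_{\Z_p}\Q_p$ as a free rank-one $A$-module, so that a trivialization $\psi$ carries Dieudonn\'e submodules to fractional $W_R\{F,V\}$-ideals, and then apply Lemma~\ref{lem-homs} to match morphisms and isomorphism classes. The paper's one-paragraph proof is quite terse and effectively defers the subtle point --- that the Frobenius $F_\psi$ transported under $\psi$ need not coincide with the fixed $F$ --- to the citation of Waterhouse. You make this step fully explicit: both $F_\psi$ and $F$ are of the form $\alpha\circ\sigma$ with $\alpha$ satisfying condition $(*)$ of Lemma~\ref{lem-F}, the cocycle $\alpha_F/\alpha_\psi$ has trivial norm by Lemma~\ref{lem:norm_sigma} applied to both sides (each yielding $\Delta(\pi)$), and the vanishing of $H^1(\langle\sigma\rangle, A_\nu^\times)$ follows from Shapiro's lemma (reducing to $\langle\tau_\nu\rangle$ acting on $(LE_\nu)^\times$) plus classical Hilbert 90. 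That is a genuine and worthwhile elaboration, and every step checks out. One small imprecision worth noting: the parenthetical ``the $R_p$-stability is automatic from $\pi=F^a$ and $q/\pi=V^a$'' applies only to $W$-lattices that are already $F,V$-stable (i.e.\ actual Dieudonn\'e submodules, which are the objects of $\widetilde{\mathfrak{X}}_{\pi,p}$); a generic full-rank $W$-lattice in $M_\Q$ need not be $R_p$-stable. From context this is clearly what you mean, but the phrasing could mislead.
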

\begin{proof}
A Dieudonn\'e module is a $W_R\{F,V\}$-module, with $F$ having the Frobenius property (cf.~Definition~\ref{def:Frobprop}) and $V=pF^{-1}$, that is free over $W$ of rank $2g$, cf.~\cite[Chapter 1]{Wat69}. Morphisms between them are $W_R$-linear morphisms that commute with the action of $F$ and~$V$. Every $W_R$-module that is free over $W$ of rank $2g$ is $W_R$-linearly isomorphic to a fractional ideal in $A$ (which are all free over $W$ of rank $2g$). The result then follows from Lemma~\ref{lem-homs}. 
\end{proof}

\subsection{Isomorphism classes of \texorpdfstring{$W_R\{F,V\}$-ideals}{WRFV} } \label{sec:isoWFV}
\subsubsection{The connected-\'etale sequence} \label{sec:connetale}
For a place $\nu$ of $E_p$ put $s(\nu)=\mathrm{val}_{\nu}(\pi)/(ae_{\nu})$. 
This will be rational number between $0$ and $1$ (which is equal to $i_{\nu}/n_{\nu}$ in the notation of \cite[p. 527]{Wat69}) called the \emph{slope} of $\nu$. 
Let $\frp_{E_\nu}$ denote the maximal ideal of~$\mathcal{O}_{E_p}$ corresponding to $\nu$.

The splitting $\mathcal O_{E_p}=\prod_{\nu|p} \mathcal O_{E_{\nu}}$ does not necessarily descend to $R_p$. 
However, $R_p$ is complete and semilocal, so it can be identified with the direct sum of the completions at its maximal ideals. 
The maximal ideals of $R_p$ are exactly equal to $\frp_{E_\nu} \cap R_p=\{x \in R_p: \mathrm{val}_{\nu}(x)>0 \}$, which might coincide for different $\nu$.
We denote the set of maximal ideals of $R_p$ by $\mathcal{P}_{R_p}$. 

\begin{lem} \label{lem-decomp}
    Let $*$ denote one of $\{0\}$, $\{1\}$ or the open interval $(0,1)$.
    Let $\nu$ and $\mu$ be two places of $E_p$.
    Assume that $s(\nu) \in *$.
    If $\frp_{E_\nu} \cap R_p = \frp_{E_\mu} \cap R_p$ then $s(\mu) \in *$, as well.
\end{lem}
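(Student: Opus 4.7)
The plan is to read off whether $s(\nu)$ lies in $\{0\}$, $\{1\}$ or $(0,1)$ purely from which of the two distinguished generators $\pi$ and $q/\pi$ of $R$ lie in the maximal ideal $\frp_{E_\nu}\cap R_p$. The starting point is the identity $\pi\cdot(q/\pi)=q=p^a$. Taking $\nu$-adic valuations and using $\val_\nu(p)=e_\nu$, one gets
\[
\val_\nu(\pi)+\val_\nu(q/\pi)=a\,e_\nu.
\]
Since $\pi$ and $q/\pi$ both lie in $\OO_{E_\nu}$, the two valuations on the left are nonnegative, so dividing by $a\,e_\nu$ one obtains two complementary nonnegative reals in $[0,1]$, the first of which is $s(\nu)$.

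From this it follows immediately that
\[
s(\nu)=0\iff \val_\nu(\pi)=0\iff \pi\notin\frp_{E_\nu},
\]
and dually
\[
s(\nu)=1\iff \val_\nu(q/\pi)=0\iff q/\pi\notin\frp_{E_\nu},
\]
while $s(\nu)\in(0,1)$ is exactly the case where both $\pi$ and $q/\pi$ lie in $\frp_{E_\nu}$. Since $\pi,q/\pi\in R_p$, these memberships are unchanged if one intersects $\frp_{E_\nu}$ with $R_p$: $\pi\in\frp_{E_\nu}\cap R_p \iff \pi\in\frp_{E_\nu}$, and likewise for $q/\pi$.

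Thus the three possibilities for the value set $*$ are detected in a mutually exclusive way by the data of whether $\pi$ and $q/\pi$ belong to $\frp_{E_\nu}\cap R_p$. Under the hypothesis $\frp_{E_\nu}\cap R_p=\frp_{E_\mu}\cap R_p$, the same data holds for $\mu$, and so $s(\mu)\in *$ as well. There is no real obstacle here; the only point to check carefully is that the trichotomy above is exhaustive and exclusive, which is immediate from $s(\nu)+s'(\nu)=1$ with both summands in $[0,1]$, where $s'(\nu):=\val_\nu(q/\pi)/(ae_\nu)$.
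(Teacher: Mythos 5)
Your proof is correct and takes essentially the same approach as the paper: both arguments detect the value of $s(\nu)$ — whether it is $0$, $1$, or in $(0,1)$ — purely from whether the generators $\pi$ and $q/\pi$ of $R_p$ lie in $\frp_{E_\nu}\cap R_p$. The paper phrases this as a brief proof by contradiction covering two of the three cases (the third being symmetric), whereas you spell out the complete trichotomy constructively via the identity $\val_\nu(\pi)+\val_\nu(q/\pi)=ae_\nu$; the underlying mechanism is identical.
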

\begin{proof}
We have $R_p=\Z_p[\pi,q/\pi]$. Say that $s(\nu)=0$ and $s(\mu)>0$, then $\pi \in \frp_{E_{\mu}} \cap R_p$ but $\pi \notin \frp_{E_\nu} \cap R_p$, contradiction. Say that $s(\nu)=1$ and $s(\mu)<1$, then $q/\pi \in \frp_{E_\mu} \cap R_p$ but $q/\pi \notin \frp_{E_\nu} \cap R_p$, contradiction. 
\end{proof}

\begin{notn}
    \label{not:splitR}
    We partition
    $\mathcal{P}_{R_p} = \mathcal{P}_{R_p}^{\{0\}} \sqcup \mathcal{P}_{R_p}^{(0,1)} \sqcup \mathcal{P}_{R_p}^{\{1\}}$ according to Lemma~\ref{lem-decomp}.
    That is, for each $* \in \{ \{0\}, \{1\}, (0,1)\}$, let $\mathcal{P}_{R_p}^{*}$ denote the maximal ideals $\frp_{E_\nu} \cap R_p$ of $R_p$ for which $s(\nu) \in *$.
    We decompose
    \[  
    R_p = R_p^{\{0\}} \times R_p^{(0,1)} \times R_p^{\{1\}},
    \]
    where
    \[ R_p^* = \prod_{\frp \in \mathcal{P}_{R_p}^*} R_\frp. \]
    In this product we put $R_p^* = 0$ if $\mathcal{P}^*_{R_p} = \emptyset$.
    For ease of notation, we will write $R'_p = R^{(0,1)}_p$ from now on.
    The decomposition of $R_p$ induces a corresponding decomposition of $E_p$, of~$A$, of~$W_R$, of any fractional $W_R\{F,V\}$-ideal, and of their endomorphism rings. 
    All these decompositions will be denoted analogously to that of $R_p$.
    In particular, we will write $E'$, $W'_R$, $A'$, $\Delta'$, $F'$ and $V'$.
    In the same vein, by a \emph{$\Delta^*$-morphism} we will mean a morphism given by multiplication by an invertible element of $\Delta(E_p^*)$ and~$F^*$ will denote the restriction of $F$ to $A^{*}$.  
    \end{notn}

The decomposition of the endomorphism rings can be deduced from the connected-\'etale sequence, see \cite[Example 3.1.6]{chaiconradoort14}. The three parts corresponding to $\{0\}$, $(0,1)$ and $\{1\}$ will be called the \emph{\'etale part}, the \emph{local-local part}, and the \emph{multiplicative part}, respectively. 

\begin{prop} \label{prop-0and1}
    There is a bijection between $\mathcal{P}_{R_p}^{\{0\}}$ (resp.~$\mathcal{P}_{R_p}^{\{1\}}$) and the maximal ideals of $\Z_p[\pi]$ that do not contain $\pi$ (resp.~of $\Z_p[q/\pi]$ that do not contain $q/\pi$). 
\end{prop}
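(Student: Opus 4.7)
The plan is to characterize the slope-$0$ and slope-$1$ loci purely in terms of which of the elements $\pi$ and $q/\pi$ lie in the maximal ideal, and then to use a simple localization identity inside $R_p$.

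First, I would observe that since $v_\nu(q) = v_\nu(p^a) = ae_\nu$, the definition $s(\nu) = v_\nu(\pi)/(ae_\nu)$ gives
\[
s(\nu) = 0 \iff \pi \notin \frp_{E_\nu}, \qquad s(\nu) = 1 \iff v_\nu(q/\pi) = 0 \iff q/\pi \notin \frp_{E_\nu},
\]
while $s(\nu) \in (0,1)$ forces both $\pi$ and $q/\pi$ to lie in $\frp_{E_\nu}$. Since every maximal ideal of $R_p$ is of the form $\frp_{E_\nu} \cap R_p$ for some $\nu$ (as $\OO_{E_p}$ is integral over $R_p$), combining this with Lemma~\ref{lem-decomp} yields
\[
\mathcal{P}_{R_p}^{\{0\}} = \{\frp \in \mathrm{MaxSpec}(R_p) : \pi \notin \frp\}, \qquad \mathcal{P}_{R_p}^{\{1\}} = \{\frp \in \mathrm{MaxSpec}(R_p) : q/\pi \notin \frp\}.
\]

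Next, I would exploit the relation $\pi \cdot (q/\pi) = q = p^a$ inside $R_p$: it shows that after inverting $\pi$ we have $q/\pi = p^a \pi^{-1} \in \Z_p[\pi][\pi^{-1}]$, so the inclusion $\Z_p[\pi] \hookrightarrow R_p$ becomes an equality
\[
R_p[\pi^{-1}] = \Z_p[\pi][\pi^{-1}].
\]
Symmetrically, $R_p[(q/\pi)^{-1}] = \Z_p[q/\pi][(q/\pi)^{-1}]$. The standard correspondence between prime ideals of $S[f^{-1}]$ and prime ideals of $S$ not meeting $\{f^n\}$ then gives bijections between maximal ideals of $R_p$ not containing $\pi$ (resp.~$q/\pi$) and maximal ideals of $\Z_p[\pi]$ (resp.~$\Z_p[q/\pi]$) not containing $\pi$ (resp.~$q/\pi$), as required.

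The one routine point one needs to check is that the prime-ideal correspondence for localization preserves maximality here. This is immediate because all the rings involved are finite $\Z_p$-algebras, hence Noetherian, semilocal, and of Krull dimension at most one; so a non-zero prime ideal not containing $\pi$ is automatically maximal both before and after localization. No step is expected to be delicate — the content lies entirely in the slope-valuation dictionary of the first paragraph; the rest is commutative-algebra bookkeeping.
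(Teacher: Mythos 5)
Your proof is correct and takes essentially the same approach as the paper: both rest on the observation that inverting $\pi$ makes $q/\pi$ superfluous, which you express globally as $R_p[\pi^{-1}] = \Z_p[\pi][\pi^{-1}]$ and the paper expresses prime-by-prime as $R_\frp = (\Z_p[\pi])_\frp$ for each maximal ideal $\frp$ of $\Z_p[\pi]$ not containing $\pi$. One minor imprecision in your last paragraph: since $\Z_p[\pi]$ and $R_p$ are generally not domains, ``non-zero prime ideal'' should read ``non-minimal prime ideal'' (indeed every minimal prime fails to contain $\pi$, as $\pi$ is a unit in the total ring of fractions), but the conclusion is unaffected since the localization correspondence is inclusion-preserving and hence sends maximal ideals to maximal ideals in this Krull-dimension-$\le 1$ setting.
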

\begin{proof}
    Note that $\mathcal{P}_{R_p}^{\{0\}}$ consists of the maximal ideals of $R_p$ not containing $\pi$. 
    Let $\frp$ be a maximal ideal of $\Z_p[\pi]$ that does not contain $\pi$.
    Then $q/\pi \in ( \Z_p[\pi] )_\frp$ and hence $R_\frp = ( \Z_p[\pi])_\frp$.
    Since $R_p$ is isomorphic to the direct product of the completions of $R$ at the maximal ideals above $p$, and $(\Z_p[\pi])_\frp$ is local, we get that there is only one maximal ideal of $R$ above~$\frp$, 
    proving bijectivity.
    The proof for $\mathcal{P}_{R_p}^{\{1\}}$ is analogous. 
\end{proof}

\begin{cor}
    There is a bijection between the maximal ideals not containing $\pi$ (resp.~$q/\pi$) of $R_p$ and the irreducible factors $f(x) \neq x$ modulo $p$ of the minimal polynomial of $\pi$.
\end{cor}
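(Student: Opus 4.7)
The plan is to combine Proposition~\ref{prop-0and1} with the standard Chinese-Remainder-type description of the maximal ideals of $\Z_p[\alpha]$ in terms of irreducible factors modulo $p$ of the minimal polynomial of $\alpha$.

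For the case of maximal ideals not containing $\pi$, I would first observe that $\Z_p[\pi] \cong \Z_p[x]/(m(x))$, where $m$ is the minimal polynomial of $\pi$. Since every maximal ideal of this ring contains $p$, they correspond bijectively to maximal ideals of $\F_p[x]/(\bar{m}(x))$, which by CRT are in bijection with the monic irreducible factors of $\bar{m}(x)$ in $\F_p[x]$. A maximal ideal contains $\pi$ precisely when the corresponding irreducible factor is $x$, since $\pi$ maps to $x$ in the quotient. Composing with the bijection from Proposition~\ref{prop-0and1} then yields the first bijection of the corollary.

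For the case of maximal ideals not containing $q/\pi$, the strategy is to exhibit a natural involution $\iota$ of $E$ swapping $\pi$ and $q/\pi$, and then reduce to the previous case. Concretely, I would define $\iota: E \to E$ by $\pi \mapsto q/\pi$; this is a well-defined $\Q$-algebra endomorphism provided $m(q/\pi)=0$ in $E$, which holds because the Weil-type symmetry $\pi_i \leftrightarrow q/\pi_i$ permutes the roots of $m$ and so forces $q/\pi$ to satisfy $m$ in each component of $E$. Since $\iota\circ\iota$ is the identity, $\iota$ is an automorphism of $E$, and it restricts to an automorphism of $R=\Z[\pi,q/\pi]$, hence of $R_p$, identifying $\Z_p[q/\pi]$ with $\Z_p[\pi]$ via $q/\pi \leftrightarrow \pi$. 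Under this identification, maximal ideals of $\Z_p[q/\pi]$ not containing $q/\pi$ correspond exactly to those of $\Z_p[\pi]$ not containing $\pi$, so the first case applies once more.

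The only subtle step will be verifying that $m(q/\pi) = 0$ in $E$, i.e., that $\iota$ is well-defined. This reduces to the classical Weil-type statement that, for the characteristic polynomial of Frobenius of an abelian variety over $\F_q$, the multiset of roots is stable under $\alpha \mapsto q/\alpha$; since $m$ is the radical of this polynomial, the same holds for $m$. Once this is in place, the remainder of the argument is a straightforward chaining of the constructions above.
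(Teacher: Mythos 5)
Your proposal is correct and follows essentially the same route as the paper: the CRT-type description of maximal ideals of $\Z_p[\pi]$ via irreducible factors of $\bar m(x)$ (which the paper attributes to a standard reference), composed with Proposition~\ref{prop-0and1}. The paper's own proof is terser and handles the $q/\pi$ case implicitly by saying the result ``follows from Proposition~\ref{prop-0and1}''; you make that half explicit by observing that $m(q/\pi)=0$ (Weil symmetry) and exhibiting the involution $\pi\leftrightarrow q/\pi$ that carries $\Z_p[\pi]$ to $\Z_p[q/\pi]$ and transports maximal ideals not containing $\pi$ to those not containing $q/\pi$. This is a legitimate way to justify the step the paper leaves to the reader; an equivalent shortcut is simply to note that $q/\pi$ has the same minimal polynomial $m$ (since it is a degree-$\deg m$ generator of $E$ annihilated by $m$), so the factorization-of-$\bar m$ description of maximal ideals applies verbatim to $\Z_p[q/\pi]$.
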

\begin{proof}
    There is a bijection between the maximal ideals of $\Z_p[\pi]$ and the irreducible factors  modulo $p$ of the minimal polynomial of $\pi$, see \cite[Theorem 8.2]{psh08}. An irreducible factor $f(x)$ corresponds to the maximal ideal $p\Z_p[\pi]+f(\pi)\Z_p[\pi]$. The maximal ideals that do not contain $\pi$ therefore correspond to the irreducible factors different from $x$. The result now follows from Proposition~\ref{prop-0and1}.
\end{proof}

\begin{prop}\label{prop-01}
    The set $\mathcal{P}_{R_p}'$ is non-empty if and only if the isogeny class $\mathcal{A}_\pi$ is non-ordinary.
    If so, then $\mathcal{P}_{R_p}'$ consists of one element, the maximal ideal $\frp=(p,\pi,q/\pi)$ of $R_p$, which has residue field $\F_p$.
    If $q=p^a$ is not prime, that is, $a>1$, then $\frp$ is singular and $R_p$ is not the maximal order of $E_p$.
\end{prop}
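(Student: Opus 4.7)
The plan is to handle the three assertions in order, with the last being the substantive one.

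For the first equivalence, I would invoke the standard identification between the Newton polygon slopes of any abelian variety in $\mathcal{A}_\pi$ and the numbers $s(\nu)$ at the places $\nu \mid p$ of $E$, each counted with multiplicity $n_\nu$. Then $\mathcal{P}_{R_p}' \neq \emptyset$ iff some slope lies strictly between $0$ and $1$, iff $\mathcal{A}_\pi$ is non-ordinary.

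For the description of $\frp$, I would take any $\nu$ with $s(\nu)\in(0,1)$; the inequality $0<\mathrm{val}_\nu(\pi)<\mathrm{val}_\nu(q)$ gives $\pi,q/\pi\in\frp_{E_\nu}$, while $p\in\frp_{E_\nu}$ is automatic, so $\frp_{E_\nu}\cap R_p\supseteq(p,\pi,q/\pi)$. Since
\[
R_p/(p,\pi,q/\pi) \;=\; \F_p[\pi,q/\pi]/(\pi,q/\pi) \;=\; \F_p
\]
is already a field, $(p,\pi,q/\pi)$ is a maximal ideal of $R_p$ with residue field $\F_p$, and equals $\frp_{E_\nu}\cap R_p$ independently of the chosen $\nu$. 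Hence $\mathcal{P}_{R_p}'$ has a single element $\frp = (p,\pi,q/\pi)$.

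For the last assertion, my plan is to prove the stronger statement that $(R_p)_\frp \neq (\mathcal{O}_{E_p})_\frp$, which simultaneously gives both that $\frp$ is singular and that $R_p$ is not the maximal order of $E_p$. I would proceed by cases on the number of places of $E_p$ contracting to $\frp$. If two or more such places exist, the kernels $\ker(R_p\to E_\nu)$ yield distinct minimal primes of $R_p$ contained in $\frp$, so $(R_p)_\frp$ is not a domain, in particular not a DVR. If a unique place $\nu$ contracts to $\frp$, the involution $\pi\leftrightarrow q/\pi$ on $E_p$ must fix it, forcing $s(\nu)=1/2$. If further $f_\nu>1$, the residue field $\F_{p^{f_\nu}}$ of $\mathcal{O}_{E_\nu}$ strictly contains $\F_p$, so the local rings cannot agree. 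If $f_\nu=1$, then $a\ge 2$ together with $s(\nu)=1/2$ yields $\mathrm{val}_\nu(\pi) = \mathrm{val}_\nu(q/\pi) = ae_\nu/2 \ge e_\nu$, so $\pi,q/\pi \in p\mathcal{O}_{E_\nu}$; hence the image of $R_p$ in $\mathcal{O}_{E_\nu}$ is contained in $\Z_p + p\mathcal{O}_{E_\nu}$, which is a proper subring of $\mathcal{O}_{E_\nu}$ whenever $n_\nu \ge 2$. The slope-pairing in the CM factor of $E$ that contains $\nu$ forces $n_\nu$ to be even (self-conjugate local factors of a CM field have even local degree, and non-self-conjugate places come in pairs), so $n_\nu \ge 2$ except in the degenerate case where the factor is $\Q$, which is handled by an annihilator argument: the candidate unit $u = (m/m_\nu)(\pi)$ supported only at $\nu$ has constant term divisible by $p$ (because the remaining factors of $m$ contribute valuation at least $a \ge 2$), so $u \in \frp$, showing that no element of $R_p \setminus \frp$ annihilates the minimal prime $\ker(R_p \to E_\nu)$ and hence $(R_p)_\frp$ again fails to be a domain.

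The main obstacle is the case $f_\nu = 1$: verifying $n_\nu \ge 2$ from the CM structure and dealing with the degenerate sub-case $n_\nu = 1$ (trivial $\Q$-factor) via the annihilator computation. The other sub-cases are either structural (multiple minimal primes) or a direct residue-field comparison.
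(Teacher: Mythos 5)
Your plan is sound for the first two assertions and for the multi-place and $f_\nu>1$ cases, and the subring argument $\Z_p[\pi_\nu,q/\pi_\nu]\subseteq\Z_p+p\mathcal{O}_{E_\nu}\subsetneq\mathcal{O}_{E_\nu}$ in the $f_\nu=1$, $n_\nu\ge 2$ case is a valid alternative to the paper's argument. The paper instead runs a single contradiction: assume $R_\frp$ is a DVR, conclude $s=1/2$ (else two conjugate places force two minimal primes in a domain) and $f_\frp=1$ (from the residue field), and get $n_\frp\ge 2$ from the Honda--Tate commutativity criterion $s\cdot n_\frp\in\Z$ (cited from Waterhouse p.\ 527); then $e_\frp\ge 2$ gives $p\in\frp^2$ and $\val_\frp(\pi)=\frac{1}{2}ae_\frp\ge a\ge 2$ gives $\pi,q/\pi\in\frp^2$, so $\frp=\frp^2$, a contradiction. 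You get the same parity of $n_\nu$ from the CM-field structure and self-conjugacy of $\nu$; this is correct, but it quietly uses the fact that every simple factor $E_i$ is a CM field (true in the commutative setting, since a totally real Weil number yields a quaternionic endomorphism algebra), which you should state explicitly rather than leave implicit in the phrase "the CM factor of $E$."

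The genuine gap is the "degenerate case." First, it is vacuous: $E_i=\Q$ forces $\pi_i=\pm p^{a/2}\in\Q$, which gives a quaternionic $\End^0(X_i)$, excluded by the square-free assumption. Second, and more seriously, the annihilator argument is internally inconsistent. In the situation you describe (a unique place $\nu$ with $\frp_{E_\nu}\cap R_p=\frp$), the localization $(R_p)_\frp$ has a unique minimal prime, namely $\ker(R_p\to E_\nu)(R_p)_\frp$; and a reduced local ring with a unique minimal prime is automatically a domain, because its nilradical equals that minimal prime. So the conclusion you aim for, that "$(R_p)_\frp$ again fails to be a domain," cannot hold, and in particular there \emph{does} exist an element of $R_p\setminus\frp$ annihilating $\ker(R_p\to E_\nu)$ — showing $u\in\frp$ for one candidate $u$ does not rule this out. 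The way to handle a hypothetical $n_\nu=1$ would simply be to observe that $s(\nu)\cdot n_\nu=\tfrac{1}{2}\notin\Z$, contradicting the Honda--Tate integrality condition, which is exactly the tool the paper uses and which resolves all sub-cases of $f_\nu=1$ in one stroke.
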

\begin{proof}
    The first statement is clear.
    Assume for the rest of the proof that the isogeny class is non-ordinary.
    If we let $\frp=(p,\pi,q/\pi)$ then
    \begin{equation}\label{eq:primefield}
        \frac{R_p}{\frp} = \frac{\Z_p[\pi,q/\pi]}{(p,\pi,q/\pi)} \simeq \F_p.
    \end{equation} 
    Hence $\frp$ is a maximal ideal of $R_p$ with residue field $\F_p$.
    Let $\frp_{E_\nu}$ be any maximal ideal of $\OO_{E_p}$ associated to a place of slope in the interval $(0,1)$.
    Since both $\pi$ and $q/\pi$ belong to $\frp_{E_\nu}$, we have that $\frp_{E_\nu}\cap R_p = \frp$.
    So~$\frp$ is the unique element of $\mathcal{P}_{R_p}'$.

    Assume that $a>1$. We now show the last statement by contradiction: assume that $\frp$ is regular, that is, $R_{\frp}$ is a DVR, with valuation $\val_\frp$.
    Let $s = \val_\frp(\pi)/ae_\frp$ be the slope of the place associated with~$\frp$.
    If $0<s<1/2$ then there must a place of slope $1-s$, see Subsection~\ref{sec:duality}, contradicting the fact that $\mathcal{P}_{R_p}'$ consists of one element.
    Hence $s = 1/2$.
    By Equation~\eqref{eq:primefield}, we have  
    \[ e_\frp = e_\frp f_\frp = [E_\frp:\Q_p] >1, \]
    where the inequality follows from the fact that the endomorphism ring is commutative and so  
    \[ f_\frp \val_\frp(\pi)/a=s [E_\frp:\Q_p] = \frac{1}{2}[E_\frp:\Q_p] \in \Z, \]
    see \cite[p.~527]{Wat69}.
    Hence $e_\frp\geq 2$ which implies that $p\in \frp^2$.
    Since $\frp$ is fixed by the automorphism of $E_p$ sending $\pi$ to $q/\pi$, we get that
    \[ \val_\frp(\pi) = \val_\frp(q/\pi) = s a e_\frp = \frac{1}{2}ae_\frp \geq a >1. \]
    This implies that $\pi,q/\pi \in \frp^2$ as well.
    It follows that $\frp = \frp^2$ which is impossible.
    Therefore, $\frp$ is a singular maximal ideal and, hence, $R_p$ is not the maximal order of $E_p$.
\end{proof}
Conversely, it is not true that if $\frp$ is singular, then $q$ is not a prime; see Example~\ref{ex:end_max_p}.

\subsubsection{Duality} \label{sec:duality}
The CM involution $\pi\mapsto \bar\pi=q/\pi$ induces an involution $\phi\colon E_p \to E_p$ by $\phi(\sum_{i,j} a_{i,j}\pi^i \bar{\pi}^j)=\sum_{i,j} a_{i,j}\pi^j \bar{\pi}^i$ with $a_{i,j} \in \Q_p$.
If $\nu$ is a place of $E_p$ then $\bar \nu=\nu \circ \phi$ is also a place of $E_p$ with $s(\bar \nu)=1-s(\nu)$. We also define $\phi\colon A \to A$ by $\phi(\sum_{i,j} a_{i,j,k} \zeta^k_{q-1} \pi^i \bar{\pi}^j)=\sum_{i,j} a_{i,j,k} \zeta^k_{q-1}\pi^j \bar{\pi}^i$. 
The proof of the following lemma is straightforward and therefore omitted. 

\begin{lem} \label{lem:duality} Choose any $F_\nu=\alpha \circ \sigma$ with the Frobenius property. Then $F_{\bar \nu}=\phi(\alpha) \circ \sigma$ will have the Frobenius property, and $\phi$ induces a bijection, taking  $W_{R_\nu}\{F_\nu,V_\nu\}$-ideals $I$ to $W_{R_{\bar \nu}}\{F_{\bar \nu},V_{\bar \nu}\}$-ideals $\phi(I)$. 
\end{lem}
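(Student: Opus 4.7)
The plan is to verify the two claims in sequence: first that $F_{\bar\nu} = \phi(\alpha) \circ \sigma$ has the Frobenius property, and second that $\phi$ induces the stated bijection on ideals.

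For the Frobenius property, additivity of $F_{\bar\nu}$ is immediate, and the semilinearity relation $F_{\bar\nu} \lambda = \lambda^\sigma F_{\bar\nu}$ for $\lambda \in L$ holds by the same computation as for $F_\nu$, since $\phi$ fixes $L$ pointwise (it acts only on the $E_p$-factor of $A = L \otimes_{\Q_p} E_p$). For the identity $F_{\bar\nu}^a = \Delta|_{E_{\bar\nu}}(\pi_{\bar\nu})$, the decisive observation is that $\phi$ and $\sigma$ commute on $A$, since $\sigma$ acts on the $L$-factor and $\phi$ on the $E_p$-factor. Hence $\phi(\alpha)^{\sigma^i} = \phi(\alpha^{\sigma^i})$ for all $i$, and iterating gives
\[
F_{\bar\nu}^a = \phi(\alpha)\, \phi(\alpha)^\sigma \cdots \phi(\alpha)^{\sigma^{a-1}} = \phi\bigl(\alpha\, \alpha^\sigma \cdots \alpha^{\sigma^{a-1}}\bigr) = \phi\bigl(\Delta|_{E_\nu}(\pi_\nu)\bigr).
\]
Functoriality of the diagonal embedding together with the identification of $E_\nu$ and $E_{\bar\nu}$ via $\phi$ then identifies the right-hand side with $\Delta|_{E_{\bar\nu}}(\pi_{\bar\nu})$.

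For the bijection, I would first check that $\phi$ preserves $W_R = W \otimes_{\Z_p} R_p$: indeed $\phi$ fixes $W$ (which lies in the $L$-factor) and sends $R_p$ to itself since $\phi$ swaps the generators $\pi$ and $q/\pi$. Hence $\phi$ sends fractional $W_R$-ideals to fractional $W_R$-ideals. Using once more that $\sigma$ and $\phi$ commute, for a $W_{R_\nu}\{F_\nu,V_\nu\}$-ideal $I$ one computes
\[
F_{\bar\nu}(\phi(I)) = \phi(\alpha)\, \phi(I)^\sigma = \phi(\alpha\, I^\sigma) = \phi(F_\nu(I)) \subseteq \phi(I),
\]
and the same reasoning gives $V_{\bar\nu}(\phi(I)) \subseteq \phi(I)$. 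Since $\phi^2 = \mathrm{id}$, the analogous construction starting from $\bar\nu$ provides an inverse, yielding the asserted bijection.

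The main obstacle is purely notational: carefully tracking how $\phi$ permutes the places of $E_p$ and how $\pi_\nu$ and $\pi_{\bar\nu}$ correspond under the induced isomorphism $E_\nu \to E_{\bar\nu}$ so that the computation of $F_{\bar\nu}^a$ lands in the correct component of $A$ and produces $\Delta|_{E_{\bar\nu}}(\pi_{\bar\nu})$ on the nose. Presumably this is why the authors omit the proof: once the place-level bookkeeping is set up, each verification is a short direct computation.
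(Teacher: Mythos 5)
The paper omits its own proof, so there is nothing to compare against; judged on its own terms, your argument for the bijection is correct, but your verification of the Frobenius property fails at precisely the step you set aside as ``purely notational.'' You correctly reduce, via $\phi\sigma=\sigma\phi$, to $F_{\bar\nu}^a=\phi\bigl(\alpha\,\alpha^\sigma\cdots\alpha^{\sigma^{a-1}}\bigr)=\phi\bigl(\Delta|_{E_\nu}(\pi_\nu)\bigr)$. But the identification $E_\nu\to E_{\bar\nu}$ induced by $\phi$ does not send $\pi_\nu$ to $\pi_{\bar\nu}$: since $\phi(\pi)=q/\pi$ and $\phi(\Delta(x))=\Delta(\phi(x))$, the $\bar\nu$-component of $\phi(\pi)$ is $q/\pi_{\bar\nu}$, so $\phi\bigl(\Delta|_{E_\nu}(\pi_\nu)\bigr)=\Delta|_{E_{\bar\nu}}(q/\pi_{\bar\nu})$, not $\Delta|_{E_{\bar\nu}}(\pi_{\bar\nu})$. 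A valuation count shows these cannot agree unless $s(\nu)=1/2$: $\val_{\bar\nu}(q/\pi_{\bar\nu})=ae_\nu s(\nu)$ while $\val_{\bar\nu}(\pi_{\bar\nu})=ae_\nu\bigl(1-s(\nu)\bigr)$, consistent with $s(\bar\nu)=1-s(\nu)$. Concretely, for an ordinary elliptic curve over $\F_{p^2}$ with $p$ split, $\alpha$ is a unit of $W$, so $F_{\bar\nu}^a=N_{L/\Q_p}(\alpha)$ is a unit, whereas $\pi_{\bar\nu}$ has valuation $2$. What your computation actually shows is $F_{\bar\nu}^a=\Delta|_{E_{\bar\nu}}(\bar\pi_{\bar\nu})$, i.e.\ $\phi(\alpha)\circ\sigma$ is Verschiebung-like at $\bar\nu$ (as one expects: duality flips slopes and exchanges $F$ and $V$); an operator literally satisfying Definition~\ref{def:Frobprop} at $\bar\nu$ would rather be $p\,\phi(\alpha)^{-1}\circ\sigma$, but for that choice the transport $I\mapsto\phi(I)$ no longer obviously intertwines the two $\{F,V\}$-stability conditions.

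The second half of your argument is sound and unaffected: $\phi$ fixes $W$, preserves $R_p$ (it swaps the generators $\pi$ and $q/\pi$), commutes with $\sigma$, and satisfies $\phi^2=\mathrm{id}$, so $F_{\bar\nu}\circ\phi=\phi\circ F_\nu$ and $V_{\bar\nu}\circ\phi=\phi\circ V_\nu$ with $V_{\bar\nu}=pF_{\bar\nu}^{-1}$, giving a bijection from $W_{R_\nu}\{F_\nu,V_\nu\}$-ideals onto the ideals stable under the pair $\bigl(\phi(\alpha)\circ\sigma,\,p(\phi(\alpha)\circ\sigma)^{-1}\bigr)$. The genuine gap is in the first claim: to close it you must either prove $F_{\bar\nu}^a=\Delta|_{E_{\bar\nu}}(\pi_{\bar\nu})$ on the nose — which, as shown above, is false in general for $F_{\bar\nu}=\phi(\alpha)\circ\sigma$ — or state explicitly in what modified sense (e.g.\ with $\pi_{\bar\nu}$ replaced by $\bar\pi_{\bar\nu}$, or with $F$ and $V$ interchanged at $\bar\nu$) the lemma's first assertion is to be understood, and adjust the bookkeeping accordingly.
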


\subsubsection{The \'etale and multiplicative parts} \label{seq:etalemult}
By Theorem~\ref{thm:X_pi_p_WR_F_V}, Dieudonn{\'e} modules in $\mathfrak{X}_{\pi,p}$ are in bijection with $W_R\{F,V\}$-ideals. In this section, we show that we can describe the \'etale and multiplicative part of the Dieudonn\'e module, that is when $* = \{0\}$ or $* = \{1\}$, in terms of $R_p^*$-ideals in $E_p^*$ rather than $W_R^*\{F^*,V^*\}$-ideals in $A^*$.

\begin{prop} \label{prop-ordinary-ideals} 
If $*$ is either $\{0\}$ or $\{1\}$ then one can choose $F^*$ with the Frobenius property such that the following hold.
\begin{enumerate}[(i)]
    \item \label{prop-ordinary-ideals:ext}If $I$ is an $R^*_p$-ideal then $W \otimes_{\mathbb{Z}_p} I$ is a $W_R^* \{F^*,V^*\}$-ideal.
    \item \label{prop-ordinary-ideals:meet} If $J$ is a $W_R^* \{F^*,V^*\}$-ideal then $J = W \otimes_{\mathbb{Z}_p} (\Delta^*)^{-1}(J \cap \Delta^*(E_p^*))$.
    \item \label{prop-ordinary-ideals:homs} For any $W_R^* \{F^*,V^*\}$-ideals $J_1,J_2$ there is a bijection 
    \[
\Hom_{\Delta^*}(J_1 \cap \Delta^*(E_p^*),J_2 \cap \Delta^*(E_p^*)) \longleftrightarrow 
\Hom_{W_R^*\{F^*,V^*\}}(J_1,J_2). 
\]
\end{enumerate}
\end{prop}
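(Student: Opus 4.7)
The plan is to construct a specific $F^{*}$, verify (i) and (ii) directly, and then deduce (iii) from Lemma~\ref{lem-homs}. I would treat the case $*=\{0\}$; the multiplicative case $*=\{1\}$ follows by applying the duality of Lemma~\ref{lem:duality}, which swaps slopes $0$ and $1$ and exchanges the roles of $F$ and $V$.

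For the étale part, $s(\nu)=0$ forces $\pi$ to be a unit in $R_p^{\{0\}}$, so $\Delta(\pi)\in (W_R^{\{0\}})^{\times}$. Since $L/\Q_p$ is unramified, the extension $W_R^{\{0\}}/R_p^{\{0\}}$ is finite étale and cyclic Galois with group $\langle\sigma\rangle$, and on each local factor of $R_p^{\{0\}}$ it is an unramified extension of complete local rings. The norm $N\colon (W_R^{\{0\}})^{\times}\to (R_p^{\{0\}})^{\times}$ is therefore surjective (via Hensel's lemma and the residue-field case), so I choose $\alpha\in (W_R^{\{0\}})^{\times}$ with $\alpha\,\sigma(\alpha)\cdots\sigma^{a-1}(\alpha)=\Delta(\pi)$ and set $F^{\{0\}}:=\alpha\circ\sigma$, which has the Frobenius property by Lemma~\ref{lem-F}. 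For (i), if $I$ is an $R_p^{\{0\}}$-ideal then $W\otimes_{\Z_p}I$ is a fractional $W_R^{\{0\}}$-ideal, and since $\sigma$ acts as $\sigma\otimes\mathrm{id}$ on $W\otimes E_p^{\{0\}}\simeq A^{\{0\}}$ it preserves $W\otimes I$. Hence $F^{\{0\}}(W\otimes I)=\alpha(W\otimes I)\subseteq W\otimes I$ because $\alpha\in W_R^{\{0\}}$, and $V^{\{0\}}(W\otimes I)=p\alpha^{-1}(W\otimes I)\subseteq W\otimes I$ because $\alpha$ is a unit.

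For (ii), I would use Galois descent. Let $J$ be a $W_R^{\{0\}}\{F^{\{0\}},V^{\{0\}}\}$-ideal. Since $F^{\{0\}}$ is $\sigma$-semilinear and bijective on $A^{\{0\}}$, iteration together with the fact that a $\sigma$-semilinear isomorphism of $W$-modules preserves $W$-length yields $a\cdot\ell_W(J/F^{\{0\}}(J))=\ell_W(J/(F^{\{0\}})^{a}(J))=\ell_W(J/\Delta(\pi)J)$, which vanishes since $\Delta(\pi)$ is a unit; hence $F^{\{0\}}(J)=J$, and then invertibility of $\alpha$ gives $\sigma(J)=\alpha^{-1}F^{\{0\}}(J)=J$. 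Thus $J\subseteq A^{\{0\}}=L\otimes_{\Q_p}E_p^{\{0\}}$ is stable under $W$ and under the semilinear action of $\sigma$. Galois descent along the unramified cover $W/\Z_p$ then yields an isomorphism $W\otimes_{\Z_p}J^{\sigma}\xrightarrow{\sim}J$; since $(A^{\{0\}})^{\sigma}=\Delta^{\{0\}}(E_p^{\{0\}})$, we have $J^{\sigma}=J\cap\Delta^{\{0\}}(E_p^{\{0\}})=\Delta^{\{0\}}(I)$ with $I:=(\Delta^{\{0\}})^{-1}(J\cap\Delta^{\{0\}}(E_p^{\{0\}}))$, whence $J=W\otimes_{\Z_p}I$.

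Finally, (iii) is immediate from Lemma~\ref{lem-homs} and (ii): the former gives a bijection $\Hom_{W_R^{\{0\}}\{F^{\{0\}},V^{\{0\}}\}}(J_1,J_2)\leftrightarrow \{x\in E_p^{\{0\}}:\Delta(x)J_1\subseteq J_2\}$, and (ii) applied to $J_1,J_2$ identifies the latter with $\{x\in E_p^{\{0\}}:xI_1\subseteq I_2\}=\Hom_{\Delta^{\{0\}}}(J_1\cap\Delta^{\{0\}}(E_p^{\{0\}}),J_2\cap\Delta^{\{0\}}(E_p^{\{0\}}))$. The main obstacle lies in the first step: obtaining $\alpha$ inside the possibly non-maximal order $W_R^{\{0\}}$, rather than merely in $A^{\{0\}}$ as in Waterhouse's $W$-type construction. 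This hinges on norm surjectivity for the étale extension $W_R^{\{0\}}/R_p^{\{0\}}$, which is standard at maximal local factors but requires extra care at singular primes of $R_p^{\{0\}}$.
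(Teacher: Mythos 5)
Your treatment of the étale case $*=\{0\}$ is sound, and for item (ii) it takes a genuinely different route from the paper. The paper verifies Greither's axioms for a $G$-Galois extension of rings via an explicit averaging identity in $W$ and then invokes \cite[Theorems 7.1 and 7.2]{Greither}. You instead observe that $F^a=\Delta(\pi)$ is a unit of $W_R^{\{0\}}$, so the chain $J=F^a(J)\subseteq\cdots\subseteq F(J)\subseteq J$ already forces $F(J)=J$ (the length computation is not needed), hence $\sigma(J)=\alpha^{-1}F(J)=J$, and then classical Galois descent for the finite étale Galois cover $W/\Z_p$ gives $W\otimes_{\Z_p}J^{\sigma}\xrightarrow{\;\sim\;}J$ with $J^{\sigma}=J\cap\Delta(E_p^{\{0\}})$. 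This is a clean alternative and makes the $\sigma$-stability of $J$ transparent. For item (i), the paper extracts the element $\alpha\in(W_R^{\{0\}})^\times$ from \cite[Theorem 7.4]{Wat69}; your sketch via norm surjectivity is the same key input, and the concern you flag at singular $\frp$ is not in fact an obstruction — $W\otimes_{\Z_p}R_\frp$ is étale over $R_\frp$ as a base change of the étale extension $W/\Z_p$, so the trace $W\otimes R_\frp\to R_\frp$ is surjective, which yields surjectivity of $N$ on $1$-units, and on residue fields one has the cyclic finite-field case via Lemma~\ref{lem:norm_sigma}. You should write this out, but the étaleness argument does work at non-maximal $R_\frp$.

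The genuine gap is the reduction of $*=\{1\}$ to $*=\{0\}$ by duality. Lemma~\ref{lem:duality} asserts that $F_{\bar\nu}:=\phi(\alpha)\circ\sigma$ inherits the Frobenius property from $F_\nu=\alpha\circ\sigma$; but when $\phi$ exchanges an étale place $\nu$ with a multiplicative place $\bar\nu$, this cannot hold. Since $\phi$ commutes with $\sigma$, $(F_{\bar\nu})^a$ is multiplication by $\phi(N_\sigma(\alpha))=\phi(\Delta|_{E_\nu}(\pi_\nu))=\Delta|_{E_{\bar\nu}}(\pi_\nu)$, a unit of $A_{\bar\nu}$, whereas Definition~\ref{def:Frobprop} requires $(F_{\bar\nu})^a=\Delta|_{E_{\bar\nu}}(\pi_{\bar\nu})$ with $\val_{\bar\nu}(\pi_{\bar\nu})=ae_{\bar\nu}>0$. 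So the transported operator is not a valid $F^{\{1\}}$. Your parenthetical remark that the duality ``exchanges the roles of $F$ and $V$'' is exactly the right instinct — it is $V_\nu=pF_\nu^{-1}$, whose $a$-th power is $\Delta(q/\pi_\nu)=\Delta(\pi_{\bar\nu})$, that has the right valuation — but $V_\nu$ is $\sigma^{-1}$-semilinear, so $\phi V_\nu\phi^{-1}$ is not of the form $\beta\circ\sigma$ and does not satisfy Definition~\ref{def:Frobprop} as written. The paper avoids this altogether by treating $*=\{1\}$ symmetrically: $\pi/q$ is a unit of $R_p^{\{1\}}$, so the same norm-surjectivity argument produces $u\in(W_R^{\{1\}})^\times$ with $N_\sigma(u)=\Delta(\pi/q)$; one then sets $F^{\{1\}}:=(pu)\circ\sigma$, so that $(F^{\{1\}})^a=q\,\Delta(\pi/q)=\Delta(\pi)$, and checks $F^{\{1\}}(W\otimes I)=pu(W\otimes I)\subseteq W\otimes I$ and $V^{\{1\}}(W\otimes I)=\sigma^{-1}(u^{-1})(W\otimes I)\subseteq W\otimes I$; your descent argument for (ii) and the identification for (iii) then go through verbatim. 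Replace the duality step with this direct construction.
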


\begin{remark}
 Note that $F^*$ chosen in Proposition~\ref{prop-ordinary-ideals}, and in Corollary~\ref{cor-01} below, is not necessarily of $W$-type. 
\end{remark}

\begin{proof}\
\begin{enumerate}[(i)]
    \item We first prove the case $* = \{0\}$. Arguing as in the proof of \cite[Theorem 7.4]{Wat69}, we can find an invertible element $\alpha$ of $W_R^{\{0\}}$ such that $F^{\{0\}}=\alpha \circ \sigma$ has the Frobenius property. 
    Since $\sigma(W \otimes I) \subseteq W \otimes I$ it follows that $F^{\{0\}}(W \otimes I) \subseteq W \otimes I$. Furthermore, $p\alpha^{-1}$ is in $W_R^{\{0\}}$ since $\alpha$ is a unit, and hence $V^{\{0\}}(W \otimes I) \subseteq W \otimes I$, as well. 
    
    Now consider $* = \{1\}$. Since $\pi/q$ is a unit in $R_p^{\{1\}}$, \cite[Theorem 7.4]{Wat69} can be used in the same way to find an invertible element $u$ of $W_R^{\{1\}}$ such that $F^{\{1\}}=(pu) \circ \sigma$ has  the Frobenius property. 
    It then follows in the same way as above that $W \otimes I$ will be stable under $F^{\{1\}}$ and $V^{\{1\}}$.
    \end{enumerate}
    
     In the rest of this proof we use terminology and results of \cite[Chapter 0]{Greither} to carry out a descent argument. 
     \begin{enumerate}[(i)]
     \setcounter{enumi}{1}
     \item  
        Let $G$ be the Galois group of $L=\Q_p(\zeta_{q-1})$ over $\Q_p$. We have that $G=\{\sigma_j \}_{j=1, \ldots, a}$ with $\sigma_j(\zeta_{q-1})=\zeta_{q-1}^{p^j}$. The group $G$ acts on $W=\Z_p[\zeta_{q-1}]$ and therefore has an induced action on $W \otimes_{\Z_p} R_p^*$. Moreover, $R_p^*=(W \otimes_{\Z_p} R_p^*)^G$ because $W^G=\Z_p$. 
        
        For any $i=1,\ldots,p^a-1$, let $x_i=(p^a-1)^{-1} \zeta_{q-1}^{-i} \in W$ and $y_i=\zeta_{q-1}^i \in W$. Any (not necessarily primitive) $(p^a-1)$-st root of unity except $1$ is a root of $x^{p^a-2}+x^{p^a-3}+\ldots +1$ and hence, 
        \[
        \sum_{i=1}^{p^a-1} x_i \sigma_j(y_i)=\frac{1}{p^a-1}\sum_{i=1}^{p^a-1} (\zeta_{q-1}^{p^j-1})^i=\begin{cases} 1 \text{ if } j=a, \\ 0 \text{ if } j = 1,\ldots, a-1.\end{cases}
        \]
        
        Let $J$ be any $W_R^*\{F^*,V^*\}$-ideal. It follows from \cite[Theorem 1.6.(ii')]{Greither} 
        that $W \otimes_{\Z_p} R_p^*$ over $R_p^*$ is a $G$-Galois-extension of commutative rings. 
        Moreover, $J$ is stable under~$G$, in fact $J^G=J \cap \Delta(E_p^*)$. Consider the descent datum on $J$ defined by $\Phi_{\sigma_j}=\sigma_j$ for all $j=1,\ldots, a$. From \cite[Theorem 7.1]{Greither} it follows that $J = W \otimes (\Delta^*)^{-1}(J \cap \Delta^*(E_p^*))$. 
        \item The set $\Hom_{\Delta^*}(J_1 \cap \Delta^*(E_p^*),J_2 \cap \Delta^*(E_p^*))$ can be identified with the set of $x \in E_p^*$ such that $\Delta^*(x) (J_1 \cap \Delta^*(E_p^*)) \subseteq J_2 \cap \Delta^*(E_p^*)$. By Lemma~\ref{lem-homs} we may identify $ \Hom_{W_R^*\{F^*,V^*\}}(J_1,J_2)$ with the set of $x \in E_p^*$ such that $\Delta^*(x) J_1 \subseteq J_2$. Since $\Phi_{\sigma_j}\Delta^*(x)=\Delta^*(x)\Phi_{\sigma_j}$ for any $j$, we conclude from \cite[Theorem 7.2]{Greither} that the map $x \mapsto \Delta^*(x)$ induces a bijection.
\end{enumerate} 
\end{proof}

\begin{cor} \label{cor-01} 
If $*$ is either $\{0\}$ or $\{1\}$ then one can choose $F^*$ with the Frobenius property such that the map $\psi: [I]_{R_p^*} \mapsto [I \otimes W]_{\Delta^*}$ is a bijection from isomorphism classes of $R^*_p$-ideals to $\Delta^*$-isomorphism classes of $W_R^* \{F^*,V^*\}$-ideals. 
\end{cor}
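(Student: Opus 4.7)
The plan is to deduce the corollary from Proposition~\ref{prop-ordinary-ideals}, using the same choice of $F^*$ provided there. Essentially, the three parts of that proposition give, respectively, that $\psi$ is well-defined on representatives, surjective on isomorphism classes, and injective on isomorphism classes. The heart of the argument has already been done in Proposition~\ref{prop-ordinary-ideals}; what remains is to assemble it.

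First I would check that $\psi$ is well-defined on isomorphism classes. By Proposition~\ref{prop-ordinary-ideals}\ref{prop-ordinary-ideals:ext}, for every fractional $R_p^*$-ideal $I$, the extension $W \otimes_{\Z_p} I$ is indeed a $W_R^* \{F^*,V^*\}$-ideal, so the assignment lands in the correct target set. If $I_1 \simeq I_2$ as $R_p^*$-modules, then $I_2 = x I_1$ for some $x \in (E_p^*)^\times$, and tensoring with $W$ over $\Z_p$ gives $W \otimes I_2 = \Delta^*(x)(W \otimes I_1)$, so $[W \otimes I_1]_{\Delta^*} = [W \otimes I_2]_{\Delta^*}$.

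Next I would establish surjectivity. Given any $W_R^* \{F^*,V^*\}$-ideal $J$, Proposition~\ref{prop-ordinary-ideals}\ref{prop-ordinary-ideals:meet} provides the identification $J = W \otimes_{\Z_p} (\Delta^*)^{-1}(J \cap \Delta^*(E_p^*))$. Setting $I = (\Delta^*)^{-1}(J \cap \Delta^*(E_p^*))$, which is a fractional $R_p^*$-ideal in $E_p^*$, we obtain $\psi([I]_{R_p^*}) = [J]_{\Delta^*}$.

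Finally I would verify injectivity, which is the only step requiring a little care. Suppose $\psi([I_1]_{R_p^*}) = \psi([I_2]_{R_p^*})$, so that there exists $x \in (E_p^*)^\times$ with $\Delta^*(x)(W \otimes I_1) = W \otimes I_2$. Applying Proposition~\ref{prop-ordinary-ideals}\ref{prop-ordinary-ideals:homs} to the pair $J_1 = W \otimes I_1$ and $J_2 = W \otimes I_2$ (and to its inverse) together with part~\ref{prop-ordinary-ideals:meet} to recover $I_j$ as $J_j \cap \Delta^*(E_p^*)$ (after identifying via $\Delta^*$), we obtain $x I_1 = I_2$ inside $E_p^*$, hence $[I_1]_{R_p^*} = [I_2]_{R_p^*}$. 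The only potential obstacle is confirming that the $\Delta^*$-isomorphism really comes from an element of $E_p^*$ rather than a more general $W_R^*$-linear $F^*,V^*$-equivariant map, but this is exactly the content of Lemma~\ref{lem-homs} combined with Proposition~\ref{prop-ordinary-ideals}\ref{prop-ordinary-ideals:homs}, so no new work is needed.
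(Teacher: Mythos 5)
Your proposal is correct and takes essentially the same route as the paper: deducing the corollary by assembling the three parts of Proposition~\ref{prop-ordinary-ideals}, with part~\ref{prop-ordinary-ideals:ext} giving well-definedness, part~\ref{prop-ordinary-ideals:meet} giving surjectivity, and parts~\ref{prop-ordinary-ideals:meet} and~\ref{prop-ordinary-ideals:homs} giving injectivity. One minor remark: the ``potential obstacle'' you flag at the end is not actually an issue, since by Definition/Notation~\ref{not:splitR} a $\Delta^*$-isomorphism is \emph{by definition} multiplication by an invertible element of $\Delta^*(E_p^*)$, so there is nothing to confirm there.
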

\begin{proof} It follows from Proposition~\ref{prop-ordinary-ideals}.\ref{prop-ordinary-ideals:ext} and~\ref{prop-ordinary-ideals:homs} that the map $\psi$ is well defined and injective, since the bijection in Proposition~\ref{prop-ordinary-ideals}.\ref{prop-ordinary-ideals:homs} restricts to a bijection on the subsets of $\mathrm{Hom}_{\Delta^*}$ resp.~$\mathrm{Hom}_{W_R^*}$ consisting of isomorphisms. 
Surjectivity follows from Proposition~\ref{prop-ordinary-ideals}.\ref{prop-ordinary-ideals:meet}.
\end{proof}

\subsubsection{The local-local part in the maximal endomorphism ring case} \label{sec:locallocal}
In \cite[proof of Theorem~5.1]{Wat69}, Waterhouse gave an explicit description of isomorphism classes of fractional $\OO_{A'}$-ideals which are stable under the action of $F'$ and $V'$. We paraphrase it in (i) of Proposition~\ref{prop:FVstable_OO} below, adding a formula for their number in (ii).

Each fractional $\OO_{A'}$-ideal is a direct sum of fractional $\OO_{A_\nu}$-ideals.
Moreover, the action of $F'$ and $V'$ preserves $\nu$-components.
Hence it is enough to focus our attention on a single $\nu$-component $A_\nu$ of $A'$.
Let $t_\nu$ be a uniformizer of $E_\nu$.
Then $t_\nu$ is also a uniformizer of $LE_\nu$ and every fractional $\OO_{A_\nu}$-ideal is uniquely determined by its generator, which has the form $(t_\nu^{\varepsilon_1},\ldots,t_\nu^{\varepsilon_{g_\nu}})$ for some nonnegative integers $\epsilon_1, \ldots, \epsilon_{g_{\nu}}$.

\begin{prop}\label{prop:FVstable_OO}
    For each place $\nu$ of $E$ with $s(\nu) \in (0,1)$ consider the set $\chi_\nu$ all ordered $g_\nu$-tuples
    \[ 0\leq n_1, \ldots, n_{g_\nu}\leq e_\nu \]
    such that
    \[ \sum_{i=1}^{g_\nu} n_i = \frac{g_{\nu} \val_\nu(\pi_\nu)}{a}. \]
    For each such tuple, set $\varepsilon_1=0$ and $\varepsilon_i=\varepsilon_{i-1}+n_{i-1}$, for $i=2,\ldots,g_\nu$.
    
    Fix any $F'$ of $W$-type (see Definition~\ref{def:F-Waterhouse}).
    Then:
    \begin{enumerate}[(i)]
        \item
            The map $\upsilon_\nu$ given by $(\varepsilon_1,\ldots,\varepsilon_{g_\nu}) \mapsto (t_\nu^{\varepsilon_1},\ldots,t_\nu^{\varepsilon_{g_\nu}})$ induces a bijection between $\chi'=\prod_{s(\nu) \in (0,1)} \chi_\nu$ and the set of $\Delta'$-isomorphism classes of fractional $W'_R\{F',V'\}$-ideals with multiplicator ring $\OO_{A'}$; 
        \item 
            We have 
            \begin{equation} \label{eq-numb}
            \vert \chi_{\nu}\vert =\sum_{i=0}^{d} (-1)^i \binom{g_{\nu}}{i}\binom{g_{\nu} \val_\nu(\pi_\nu)/a-i(e_{\nu}+1)+g_{\nu}-1}{g_{\nu}-1}, 
            \end{equation}
            with $d=\min(g_{\nu},\lfloor g_{\nu}\val_\nu(\pi_\nu)/(a(g_{\nu}+1))\rfloor )$.
    \end{enumerate}
\end{prop}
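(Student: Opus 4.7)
The plan is to reduce to a single component $A_\nu$, since both $F'$ and $V'$ preserve the $\nu$-decomposition of $A'$ and every fractional $\OO_{A'}$-ideal splits as a direct sum over~$\nu$, and then to compute explicitly the effect of $F_\nu$ and $V_\nu$ on a generic ideal of the required form.

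Fix a place $\nu$ with $s(\nu) \in (0,1)$. Since $\OO_{A_\nu} \simeq \OO_{LE_\nu}^{g_\nu}$ is a product of DVRs with common uniformizer $t_\nu \in E_\nu$, every fractional $\OO_{A_\nu}$-ideal can be written uniquely as
\[
I = (t_\nu^{\varepsilon_1}, \ldots, t_\nu^{\varepsilon_{g_\nu}})\,\OO_{A_\nu}
\]
for some $(\varepsilon_1, \ldots, \varepsilon_{g_\nu}) \in \Z^{g_\nu}$, and its multiplicator ring is automatically $\OO_{A_\nu}$, imposing no extra condition. With $F_\nu = \alpha \circ \sigma$ of $W$-type ($\alpha = (1, \ldots, 1, u_\nu)$) and $\sigma$ cyclically shifting coordinates and then applying $\tau_\nu$ in the last position (which fixes $t_\nu \in E_\nu$), a direct computation yields
\[
F_\nu(I) = (t_\nu^{\varepsilon_2}, t_\nu^{\varepsilon_3}, \ldots, t_\nu^{\varepsilon_{g_\nu}}, u_\nu t_\nu^{\varepsilon_1})\,\OO_{A_\nu}.
\]
Setting $n_j := \varepsilon_{j+1} - \varepsilon_j$ for $1 \le j < g_\nu$ and $n_{g_\nu} := \val_{LE_\nu}(u_\nu) - (\varepsilon_{g_\nu} - \varepsilon_1)$, the condition $F_\nu(I) \subseteq I$ becomes $n_j \ge 0$ for all $j$, while $V_\nu(I) \subseteq I$, equivalent to $pI \subseteq F_\nu(I)$, becomes $n_j \le e_\nu$ for all $j$. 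The constraint $\sum_j n_j = \val_{LE_\nu}(u_\nu) = g_\nu \val_\nu(\pi_\nu)/a$ follows from $N_{LE_\nu/E_\nu}(u_\nu) = \pi_\nu$ together with the fact that $LE_\nu/E_\nu$ is unramified of degree $a/g_\nu$. This establishes a bijection between stable fractional $\OO_{A_\nu}$-ideals and pairs consisting of a tuple in $\chi_\nu$ and a common shift $\varepsilon_1 \in \Z$; since $\Delta$-isomorphism corresponds precisely to multiplying by $\Delta(x) = (x, \ldots, x)$ for $x \in E_\nu^\times$, i.e., to shifting every $\varepsilon_j$ by the same integer $\val_\nu(x)$, normalizing $\varepsilon_1 = 0$ produces a unique representative in each class and proves (i).

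For (ii), counting tuples $(n_1, \ldots, n_{g_\nu}) \in \Z_{\ge 0}^{g_\nu}$ with $n_j \le e_\nu$ and sum $N := g_\nu \val_\nu(\pi_\nu)/a$ is a standard inclusion-exclusion: applying the principle to the events ``$n_j \ge e_\nu + 1$'' and computing the resulting unrestricted stars-and-bars counts after the substitution $n_j' = n_j - (e_\nu+1)$ gives the claimed alternating sum, where the summation range terminates once $N - i(e_\nu+1)$ first becomes negative. The main subtle point in the whole argument is (i): one must keep careful track of how $\tau_\nu$ interacts with the valuation data, which is why it is essential that $t_\nu$ be chosen as a uniformizer already living in $E_\nu$ (and not merely in $LE_\nu$) and that $LE_\nu/E_\nu$ be unramified, so that $\tau_\nu$ acts trivially on $t_\nu$ and preserves $\OO_{LE_\nu}$ setwise.
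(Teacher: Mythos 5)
Your proof is correct, and it takes a more self-contained route than the paper does. For part (i) the paper simply invokes the proof of \cite[Theorem~5.1]{Wat69}: it records that Waterhouse's argument shows the stable $\OO_{A_\nu}$-ideals are exactly those with generators $(t_\nu^{\varepsilon_1},\ldots,t_\nu^{\varepsilon_{g_\nu}})$ subject to the stated constraints, and only adds the normalization $\varepsilon_1=0$ coming from working up to $\Delta'$-isomorphism. You instead rederive this classification directly: you compute $F_\nu(I)=\alpha_\nu\sigma(I)$ explicitly for $F_\nu$ of $W$-type, translate $F_\nu(I)\subseteq I$ and $pI\subseteq F_\nu(I)$ into the inequalities $0\le n_j\le e_\nu$, and get the sum constraint from $N_{LE_\nu/E_\nu}(u_\nu)=\pi_\nu$ and unramifiedness, which is essentially reproving the relevant part of Waterhouse's theorem rather than citing it; this makes transparent exactly where the hypotheses ($W$-type, $t_\nu\in E_\nu$, $\tau_\nu$ fixing $t_\nu$, $\sigma$ stabilizing $\OO_{A_\nu}$) enter, at the cost of length. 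For part (ii) the paper extracts the coefficient of $x^{m}$ from $(1+x+\ldots+x^{e_\nu})^{g_\nu}$, while you run inclusion--exclusion on the events $n_j\ge e_\nu+1$; these are the same computation in two guises, and your termination criterion (stop when $N-i(e_\nu+1)$ becomes negative) is the correct reading of the upper limit $d$ in~\eqref{eq-numb}. The only step worth making explicit is that $\alpha_\nu\sigma(I)$ is again a fractional $\OO_{A_\nu}$-ideal (because $\sigma$ is a ring automorphism preserving $\OO_{A_\nu}$), which you use implicitly when writing $F_\nu(I)$ by a single generator; with that remark your argument is complete.
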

\begin{proof}
    The set $\chi_{\nu}$ is defined as in \cite[Theorem~5.1]{Wat69}.
    In the proof of \emph{loc.~cit.}, it is shown that $\upsilon_\nu$ can be found in the wanted form, and that an ideal determined by $g_\nu$-tuples of the form 
    $(t_\nu^{\varepsilon_1},\ldots,t_\nu^{\varepsilon_{g_\nu}})$
    is a $W'_R\{F',V'\}$-ideal if and only if, for all $\nu$, the number
    $\varepsilon_1$ is any integer and $\varepsilon_i$ for $i=2,\ldots,g_\nu$ are as in the statement.
    Since we are working up to $\Delta'$-isomorphism, we can scale and hence assume that $\varepsilon_1=0$.
    
    The cardinality $\vert \chi_{\nu} \vert$ is equal to the number of ways of writing $m=g_{\nu}\val_\nu(\pi_\nu)/a$ as a sum of positive integers (i.e., weak compositions) with each positive integer bounded by $e_{\nu}$. The formula~\eqref{eq-numb} can be found by noticing that it is also equal to the coefficient of $x^{m}$ in the polynomial
    \[ 
    (1+x+\ldots+x^{e_{\nu}})^{g_{\nu}}=\frac{(1-x^{e_{\nu}+1})^{g_{\nu}}}{(x-1)^{g_{\nu}}};
    \]
    the coefficient of $x^{i(e_{\nu}+1)}$ in the numerator of the rational expression equals $(-1)^i \binom{g_{\nu}}{i}$, and the coefficient $x^j$ for $j \geq 0$ in the expansion of the denominator equals $\binom{j+g_{\nu}-1}{g_{\nu}-1}$. One then considers all choices of $i,j$ such that $i(e_{\nu}+1)+j=m$. For a reference, see for instance \cite[Equation (4.0)]{Abramson}.
\end{proof}

\begin{df}\label{def:Upsilon} Using the notation from Proposition~\ref{prop:FVstable_OO}, put 
\[
\Upsilon=\{\prod_{s(\nu)\in (0,1)} \upsilon_\nu(\epsilon_1,\ldots,\epsilon_{g_\nu}): (\epsilon_1,\ldots,\epsilon_{g_\nu})\in \chi_\nu \text{ for all $\nu$ such that } s(\nu) \in (0,1) \}.
\]   
\end{df}

\section{Equivalences of categories} \label{sec:equiv}
In this section we will extend the results of Section~\ref{sec-abvars} to give a categorical equivalence in Theorem~\ref{thm-equivalence} between the isogeny class $\mathcal A_{\pi}$ and a category $\mathcal C_\pi$ of pairs of ideals, see Definition~\ref{def:Cpi}. The pairs of ideals will respectively give representatives of the Tate modules and the Dieudonn\'e module of the corresponding abelian variety. 

\begin{df}\label{def:Cpi}
    For each $\ell$, let $i_{\ell}$ be the injection $E \to E_{\ell}$. Let $\mathcal C_{\pi}$ be the category whose objects are pairs $(I,M)$, where $I$ is a fractional $R$-ideal in $E$, and $M$ is a $W_R\{F,V\}$-ideal such that $\Delta^{-1}(M)=i_p(I)R_p$. 
    The morphisms between objects $(I,M)$ and $(J,N)$ in $\mathcal C_{\pi}$ are the elements $\alpha \in E$ such that $\alpha I \subseteq J$ and $\Delta(i_p(\alpha)) M \subseteq N$.
\end{df}

\begin{thm} \label{thm-equivalence} There is an equivalence of categories $\Psi\colon \mathcal C_{\pi} \to \mathcal A_{\pi}$.
\end{thm}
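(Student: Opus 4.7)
The approach is to combine the local-to-global ideal-theoretic descriptions from Sections~\ref{sec:tate} and~\ref{sec:dieudonne} with the adelic parametrization of $\mathcal{A}_\pi^{\mathrm{isom}}$ from Section~\ref{sec-abvars}, and to control morphisms using Tate's isogeny theorem together with Lemma~\ref{lem-homs}.

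First I would define $\Psi$ on objects. Given $(I,M)\in\mathcal{C}_\pi$, Theorem~\ref{thm:W_R_X_ell} and Theorem~\ref{thm:X_pi_p_WR_F_V} translate the isomorphism classes of the localizations $I_\ell$ for $\ell\neq p$ and the $\Delta$-isomorphism class of $M$ into an element $Y\in\mathfrak{X}_\pi$. A direct inspection shows that the endomorphism ring of the pair $(I,M)$ agrees locally with $\mathcal{O}_Y$ from~\eqref{eq-OX}: at each $\ell\neq p$ it equals $\End_{R_\ell}(I_\ell)$, and at $p$ it equals $\Delta^{-1}(\{x\in A:xM\subseteq M\})$. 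Because an isomorphism of pairs is given by multiplication by $\alpha\in E^\times$, the iso class of $(I,M)$ defines a point in the double quotient $G_X(\Q)\backslash G_X(\mathbb{A}_f)/U_Y\simeq\Cl(\mathcal{O}_Y)$ of Proposition~\ref{prop:Api} and Corollary~\ref{cor:Api}, which corresponds to an iso class $\Psi(I,M)\in\mathcal{A}_{\pi,Y}^{\mathrm{isom}}\subseteq\mathcal{A}_\pi^{\mathrm{isom}}$.

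On morphisms, given $\alpha\in\Hom_{\mathcal{C}_\pi}((I,M),(J,N))$, the inclusion $\alpha I\subseteq J$ yields $R_\ell$-linear maps $I_\ell\to J_\ell$ for each $\ell\neq p$, while by Lemma~\ref{lem-homs} the inclusion $\Delta(i_p(\alpha))M\subseteq N$ is equivalent to a $W_R\{F,V\}$-linear map $M\to N$. Tate's isogeny theorem (\cite[Main theorem]{Tate66}) then produces a unique homomorphism $\Psi(\alpha):\Psi(I,M)\to\Psi(J,N)$ in $\mathcal{A}_\pi$. Full faithfulness follows by reversing the argument: any homomorphism of abelian varieties is recovered from its action on Tate and Dieudonn\'e modules, and by Tate's theorem together with Lemma~\ref{lem-homs} this data is equivalent to a unique element of $E$ satisfying the defining inclusions of a $\mathcal{C}_\pi$-morphism.

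The main obstacle is essential surjectivity. Given $X'\in\mathcal{A}_\pi$, one takes $M$ to represent $M(X')$ under Theorem~\ref{thm:X_pi_p_WR_F_V} and sets the target $p$-part to $\Delta^{-1}(M)$; one then needs a fractional $R$-ideal $I\subseteq E$ with $I_p=\Delta^{-1}(M)$ on the nose and $I_\ell\simeq T_\ell(X')$ as $R_\ell$-modules for every $\ell\neq p$. The equality at $p$, rather than mere isomorphism, is the delicate point. Starting from any $I_0$ with the correct isomorphism classes at all rational primes (obtainable by combining Theorem~\ref{thm:W_R_X_ell} with Propositions~\ref{prop:S0_to_prod_ell} and~\ref{prop:global_to_frl} applied also at $p$), one chooses $\beta\in E_p^\times$ with $\beta(I_0)_p=\Delta^{-1}(M)$ and uses Lemma~\ref{lem:glue_local_parts} to modify $I_0$ at the finitely many maximal ideals of $R$ above $p$ so as to enforce this equality without disturbing the iso types at $\ell\neq p$. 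The resulting pair $(I,M)$ lies in $\mathcal{C}_\pi$; since its iso class records the same local type $Y$ and the same class in $\Cl(\mathcal{O}_Y)$ as $X'$, it is sent by $\Psi$ to the iso class of $X'$, any residual scaling ambiguity being absorbed by $E^\times=G_X(\Q)$ exactly as in Corollary~\ref{cor:Api}.
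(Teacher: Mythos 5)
Your approach is genuinely different from the paper's and contains a real gap. You route the object assignment through the adelic description of Section~\ref{sec-abvars}: you send $(I,M)$ to a pair $(Y,[\text{class}])$ with $Y \in \mathfrak{X}_\pi$ and $[\text{class}]\in \Cl(\mathcal O_Y)$, and then declare $\Psi(I,M)$ to be ``the corresponding iso class.'' But $\mathcal A_\pi$ is a category of actual abelian varieties, so $\Psi$ must output an object, not an isomorphism class; the morphism assignment $\Psi(\alpha)\colon \Psi(I,M)\to\Psi(J,N)$ is meaningless until both sources and targets are fixed objects. This is precisely the content the paper supplies and that you leave out: for each object $(I,M)$ the paper constructs a concrete $X_{(I,M)} = X/K$ by choosing an integer $k$ and exponents $n_\ell, n_p$ so that $k\cdot i_\ell(I)$ sits between $\ell^{n_\ell}T_\ell(X)$ and $T_\ell(X)$ (and likewise for $M$), takes the finite subgroup $K$ they cut out, and packages this with a \emph{specific quasi-isogeny} $\varphi_{(I,M)}\colon X_{(I,M)}\to X$. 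Morphisms are then transported by $\varphi_{(J,N)}^{-1}\circ \alpha\circ\varphi_{(I,M)}$, which gives a well-defined homomorphism and makes $\Psi$ into a genuine functor. Without some such rigidification, your $\Psi$ is only a map on isomorphism classes.

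This also inverts where the difficulty lies. You say ``the main obstacle is essential surjectivity,'' and propose an argument using Lemma~\ref{lem:glue_local_parts} to realize the constraint $i_p(I)R_p = \Delta^{-1}(M)$. That local-gluing idea is sound and would be needed in a proof that starts from the isomorphism-class bijection, but once the quotient construction is in place essential surjectivity is immediate (every $X'\in\mathcal A_\pi$ is $X/K'$ for some finite $K'$, hence isomorphic to some $X_{(I,M)}$), and the real work shifts to fullness and faithfulness. There, too, you glide over a point the paper treats carefully: a morphism $\alpha\in E$ in $\mathcal C_\pi$ need not lie in $E^\times$, and since $E = E_1\times\cdots\times E_r$ with the isogeny decomposition $X\sim X_1\times\cdots\times X_r$, some components of $\alpha$ may vanish. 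The paper first establishes full faithfulness for $\alpha\in E^\times$ (corresponding to isogenies) and then extends to general $\alpha$ using the fact that morphisms between non-isogenous simple factors are zero. Your sentence ``any homomorphism of abelian varieties is recovered from its action on Tate and Dieudonn\'e modules... equivalent to a unique element of $E$'' is the right conclusion, but the argument as stated does not separate the cases, nor does it explain why the locally-defined maps at the various $\ell$ and at $p$ glue to a single element of $E$ (rather than merely of $E\otimes\Q_\ell$ at each prime); the paper sidesteps this by working with quasi-isogenies from the start.
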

\begin{proof}
Recall that we have fixed an abelian variety $X$ in $\mathcal A_{\pi}$.
We identify $E_\ell$ with $V_\ell(X)$ for each $\ell \neq p$.
Similarly, we identify $M(X)$ with a $W_R\{F,V\}$-ideal and $A$ with $M(X)\otimes_{\Z_p} \Q_p$.
For each object $(I,M)$ in $\mathcal C_{\pi}$, we fix positive integers $k$, $n_p$, and $n_{\ell}$ for all $\ell \neq p$, such that
\[ 
\ell^{n_{\ell}} T_{\ell}(X) \subseteq k \cdot i_{\ell}(I) \subseteq T_{\ell}(X) 
\quad\text{and}\quad 
p^{n_p} M(X) \subseteq k \cdot M  \subseteq M(X).
\]
For all $\ell \neq p$, let $K_{\ell}(I)$ be the subgroup of $X(\overline\F_p)[\ell^{n_{\ell}}] = T_{\ell}(X)/\ell^{n_{\ell}} T_{\ell}(X)$ equal to $k \cdot i_{\ell}(I)/\ell^{n_{\ell}} T_{\ell}(X)$. 
By the Dieudonn\'e-Cartier-Oda correspondence, the quotient $k\cdot M/p^{n_p}M(X)$ uniquely determines a $p$-power order subgroup scheme $K_{p}(M)$ of $X$ (see~\cite[Section 1]{Wat69} for details).
Let $K$ be the subgroup scheme of $X$ generated by all $K_{\ell}(I)$ together with $K_{p}(M)$. 
Define the abelian variety $X_{(I,M)}=X/K$ in $\mathcal A_{\pi}$.
By Tate's theorem, there exists an isogeny $\varphi\colon X_{(I,M)} \to X$ such that $\varphi_*(T_{\ell}(X_{(I,M)}))=k \cdot i_\ell(I) \subseteq T_{\ell}(X)$ and $\varphi_*(M(X_{(I,M)}))=k \cdot M \subseteq M(X)$.
Finally, define the quasi-isogeny $\varphi_{(I,M)}=\varphi/k\colon X_{(I,M)} \to X$ and put $\Psi((I,M))=X_{(I,M)}$. 

Let $\alpha\colon (I,M) \to (J,N)$ be a morphism between objects of $\mathcal C_{\pi}$. 
We will first only consider $\alpha \in E^\times$, which will correspond via $\Psi$ to isogenies in $\mathcal A_{\pi}$. The pair $(\alpha I,i_p(\alpha) M)$, which belongs to~$\mathcal C_\pi$ since $\alpha \in E^\times$, gives rise (by Tate's theorem, as above) to a quasi-isogeny $\varphi_{\alpha}\colon X_{(\alpha I,i_p(\alpha) M)} \to X$. 
Multiplication with $\alpha \in E^{\times}$ is then the same as an isomorphism $\epsilon_{\alpha}\colon X_{(I,M)} \to  X_{(\alpha I,i_p(\alpha) M)}$ such that $\varphi_{(I,M)}=\varphi_{\alpha} \circ \epsilon_{\alpha}$.
Again by Tate's theorem, the inclusion $(\alpha I,i_p(\alpha) M) \subseteq (J,N)$ gives rise to an isogeny $\epsilon\colon X_{(\alpha I,i_p(\alpha) M)} \to X_{(J,N)}$ such that $\varphi_{(I,M)}=\varphi_{(J,N)} \circ  \epsilon \circ \epsilon_{\alpha}$. 
We put $\Psi(\alpha\colon (I,M) \to (J,N))=\epsilon \circ \epsilon_{\alpha}$.

Since $\Psi$ respects composition and the identity, it is a functor.
We now show that $\Psi$ is essentially surjective. 
For any $Y\in \mathcal{A}_\pi$, choose an isogeny $\varphi:Y\to X$.
Then $\varphi_*(T_\ell(Y))$ is a fractional $R_\ell$-ideal in $E_\ell$, equal to $T_\ell(X)$ for all but finitely many primes $\ell \neq p$.
Similarly, $\varphi_*(M(Y))$ is a $W_R\{F,V\}$-ideal.
Let $M = \varphi_*(M(Y))$.
There exists a unique fractional $R$-ideal $I$ such that $i_\ell(I) = \varphi_*(T_\ell(Y))$ and $i_p(I) = \Delta^{-1}(M)$.
By construction, $(I,M)\in \mathcal{C}_\pi$ and $\Psi((I,M)) \simeq Y$.

Each isogeny $X_{(I,M)} \to X_{(J,N)}$ is equal to $\varphi_{(J,N)}^{-1} \circ \alpha \circ \varphi_{(I,M)}$ for some $\alpha \in E^{\times}$, viewed as a quasi-isogeny of $X$, and necessarily $\alpha I \subseteq J$ and $\Delta(i_p(\alpha)) M \subseteq N$. Moreover, if $\alpha \neq \beta \in E^{\times}$ we see directly that $\Psi(\alpha\colon (I,M) \to (J,N))\neq \Psi(\beta\colon (I,M) \to (J,N))$. 
This shows that $\Psi$ is both full and faithful when restricted to morphisms of $\mathcal C_\pi$ in $E^\times$ and isogenies in $\mathcal A_\pi$. 

Let us now consider any $\alpha \in E$. The isogeny decomposition $X \sim X_1 \times \ldots \times X_r$ gives a corresponding decomposition $E=E_1 \times \ldots \times E_r$. An element $\alpha=(\alpha_1,\ldots,\alpha_r) \in E$ is in $E^\times$ precisely if $\alpha_i \neq 0$ for all $i=1,\ldots,r$. Moreover, any homomorphism between simple factors $X_i$ and $X_j$ is either an isogeny (and then necessarily $i=j$), or the zero morphism. It is now immediate that $\Psi$ is both full and faithful on all morphisms. 
\end{proof}

\begin{df} For any overorder $S \supseteq R$ in $E$, let $\mathcal D_{\pi}(S)$ denote the category of fractional $S$-ideals with homomorphisms between two $S$-ideals $I,J$ being $\alpha \in E$ such that $\alpha I \subseteq J$. 
\end{df}

The following corollary reproves the second part of \cite[Corollary 4.4]{MarAbVar18}, which builds upon \cite{CentelegheStix15} and \cite{Del69}.

\begin{cor} Assume that (at least) one of the following conditions holds: 
\begin{enumerate}[(i)]
    \item The number $q=p$ is a prime;  
    \item The category $\mathcal A_\pi$ consists of ordinary abelian varieties, 
\end{enumerate} 
then there is an equivalence of categories between $\mathcal D_{\pi}(R)$ and $\mathcal A_{\pi}$, inducing a bijection between $\mathrm{ICM}(R)$ and $\mathcal A_{\pi}^{\mathrm{isom}}$. 
\end{cor}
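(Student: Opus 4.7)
The plan is to factor the desired equivalence through Theorem~\ref{thm-equivalence} by means of the forgetful functor $P\colon \mathcal C_\pi \to \mathcal D_\pi(R)$ sending $(I,M)$ to $I$ and $\alpha$ to $\alpha$, which is well-defined on morphisms by construction. Under either hypothesis (i) or (ii), I will show $P$ is an equivalence of categories; composing with $\Psi\colon \mathcal C_\pi \to \mathcal A_\pi$ then yields the claimed equivalence $\mathcal D_\pi(R) \simeq \mathcal A_\pi$, and the bijection $\mathrm{ICM}(R) \simeq \mathcal A_\pi^{\mathrm{isom}}$ is then automatic, since isomorphism classes in $\mathcal D_\pi(R)$ are by definition the elements of $\mathrm{ICM}(R)$.

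In case~(i), the hypothesis $q = p$ forces $a = 1$, so $L = \Q_p$, $W = \Z_p$, $A = E_p$, $W_R = R_p$, and $\Delta$ is the identity. Any $F$ with the Frobenius property then satisfies $F = F^a = \Delta(\pi)$, so $F$ and $V$ act as multiplication by $\pi$ and $q/\pi$ respectively. Consequently, every fractional $R_p$-ideal is automatically a $W_R\{F,V\}$-ideal, and the compatibility condition $\Delta^{-1}(M) = i_p(I) R_p$ pins down $M = i_p(I) R_p$ exactly. Essential surjectivity and faithfulness of $P$ are immediate, and fullness reduces to the tautology that $\alpha I \subseteq J$ implies $i_p(\alpha) i_p(I) R_p \subseteq i_p(J) R_p$.

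In case~(ii), the ordinarity of $\mathcal A_\pi$ yields $\mathcal P_{R_p}' = \emptyset$ by Proposition~\ref{prop-01}, so $R_p$, $E_p$, $W_R$, $A$, and $\Delta$ all decompose into their \'etale ($*=\{0\}$) and multiplicative ($*=\{1\}$) parts, and I may choose $F = F^{\{0\}} \times F^{\{1\}}$ as supplied by Proposition~\ref{prop-ordinary-ideals}. Essential surjectivity of $P$ then follows from Proposition~\ref{prop-ordinary-ideals}\ref{prop-ordinary-ideals:ext} applied componentwise: for any fractional $R$-ideal $I$, the module $M := W \otimes_{\Z_p} i_p(I) R_p$ is a $W_R\{F,V\}$-ideal with $\Delta^{-1}(M) = i_p(I) R_p$. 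The main step is fullness, where the crucial input is Proposition~\ref{prop-ordinary-ideals}\ref{prop-ordinary-ideals:meet} applied componentwise: this descent statement forces $M = W \otimes_{\Z_p} i_p(I) R_p$ on the nose for any $(I,M) \in \mathcal C_\pi$, rigidly reconstructing the Dieudonn\'e module from its diagonal slice. Granted this rigidity, any $\alpha \in E$ with $\alpha I \subseteq J$ satisfies $\Delta(i_p(\alpha))\,M \subseteq N$ simply by tensoring with $W$, yielding fullness; faithfulness is automatic. The substantive content of the proof is thus the descent lemma, without which $P$ would already fail to be full.
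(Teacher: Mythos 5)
Your proposal is correct and takes essentially the same approach as the paper: the paper constructs the functor $\Xi = \Psi \circ S$ where $S\colon I \mapsto (I, W\otimes I)$ is a section of your forgetful functor $P$, and both arguments hinge on the same two inputs, namely Theorem~\ref{thm-equivalence} and (for the ordinary case) the descent statements of Proposition~\ref{prop-ordinary-ideals}, which pin down the Dieudonn\'e-module slot $M$ uniquely in terms of $I$.
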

\begin{proof} If either of the conditions holds, define a functor $\Xi\colon \mathcal D_{\pi}(R) \to \mathcal A_{\pi}$ by letting $\Xi(I)=\Psi(I,W \otimes I)$ and $\Xi(\alpha\colon I \to J)=\Psi(\alpha\colon I \to J)$.

If $q=p$ is a prime, then $W=\mathbb Z_p$ and $W_R\{F,V\}$-ideals are nothing but $R_p$-ideals. Theorem~\ref{thm-equivalence} then shows that $\Xi$ is an equivalence. 

Say now that $\mathcal A_\pi$ consists of ordinary abelian varieties. Proposition~\ref{prop-ordinary-ideals} shows that all $W_R\{F,V\}$-ideals are of the form $W \otimes I$, with $I$ a fractional $R$-ideal.  Theorem~\ref{thm-equivalence} then shows that also in this case $\Xi$ is an equivalence. 
\end{proof}

In the next result we restrict ourselves to abelian varieties such that the local-local part of their Dieudonn\'e module has maximal endomorphism ring. This gives an equivalence that is similar in spirit to, for instance, \cite[Theorems 5.1 and 5.3]{Wat69}, \cite[Theorems 1.1 and 4.5]{OswalShankar19} (see further in Example~\ref{ex:almost_ord}) and \cite[Theorem 1.3]{BhatnagarFu}.

\begin{cor} \label{cor-max} 
Let $S$ denote the smallest order in $\mathcal O_E$ containing $R$ and such that $S_p$ contains $\OO_{E'_p}$.
Let $r$ be the number of $\Delta'$-isomorphism classes of $W'_R\{F',V'\}$-ideals whose endomorphism ring is $\mathcal O_{E'_p}$.
Then there are full and faithful functors $\Psi_i\colon \mathcal D_{\pi}(S) \to \mathcal A_{\pi}$ for $i=1,..,r$ that induce a bijection between the isomorphism classes of abelian varieties $Z$ in $\mathcal{A}_{\pi}$ with $\mathcal O_{E'_p} \subseteq \End(Z)_p$ and the disjoint union of $r$ copies of $\mathrm{ICM}(S)$. 
\end{cor}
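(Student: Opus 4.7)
The strategy is to restrict the equivalence $\Psi: \mathcal{C}_\pi \to \mathcal{A}_\pi$ of Theorem~\ref{thm-equivalence} to appropriate subcategories. First, I would translate the condition $\mathcal{O}_{E'_p} \subseteq \End(Z)_p$ into a condition on the pair $(I,M)$ associated with $Z$ under $\Psi$. By Lemma~\ref{lem-homs}, the $\Delta$-multiplicator of $M$ corresponds to $\End(Z)_p$, and upon decomposing by slopes its local-local component equals the $\Delta'$-multiplicator of $M'$. Requiring this to contain $\mathcal{O}_{E'_p}$ forces $M'$ to be an $\mathcal{O}_{A'}$-ideal (since $W \otimes_{\mathbb{Z}_p} \mathcal{O}_{E'_p} = \mathcal{O}_{A'}$ and $M'$ is already $W'_R$-stable). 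The compatibility $\Delta^{-1}(M) = i_p(I)R_p$ then forces $(i_p(I)R_p)^{(0,1)}$ to be an $\mathcal{O}_{E'_p}$-ideal, which by the defining minimality of $S$ gives $(I:I) \supseteq S$; thus $I$ is a fractional $S$-ideal.

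Next, I would fix representatives $N_1, \ldots, N_r$ of the $r$ $\Delta'$-isomorphism classes of $W'_R\{F',V'\}$-ideals with $\Delta'$-multiplicator $\mathcal{O}_{E'_p}$, as enumerated by Proposition~\ref{prop:FVstable_OO}. For each $i$ and each fractional $S$-ideal $I$, define $(I, M_{I,i}) \in \mathcal{C}_\pi$ whose étale and multiplicative components of $M_{I,i}$ are $W \otimes_{\mathbb{Z}_p} (i_p(I)R_p)^{\{0\}}$ and $W \otimes_{\mathbb{Z}_p} (i_p(I)R_p)^{\{1\}}$ (well-defined as $W_R^*\{F^*,V^*\}$-ideals by Corollary~\ref{cor-01}), and whose local-local component $M'_{I,i}$ is the unique $\mathcal{O}_{A'}$-ideal in the $\Delta'$-class of $N_i$ with $\Delta'$-preimage equal to $(i_p(I)R_p)^{(0,1)}$ (a principal fractional $\mathcal{O}_{E'_p}$-ideal, since $I$ is an $S$-ideal and $\mathcal{O}_{E'_p}$ is a product of DVRs). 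Set $\Psi_i(I) := \Psi((I, M_{I,i}))$ and $\Psi_i(\alpha:I \to J) := \Psi(\alpha:(I, M_{I,i}) \to (J, M_{J,i}))$. The key verification is that any $\alpha \in E$ with $\alpha I \subseteq J$ automatically satisfies $\Delta(i_p(\alpha)) M_{I,i} \subseteq M_{J,i}$: for the étale and multiplicative parts this is Proposition~\ref{prop-ordinary-ideals}.\ref{prop-ordinary-ideals:homs}, and for the local-local part, $\Delta'(i_p(\alpha)^{(0,1)}) M'_{I,i}$ lies in the $\Delta'$-class of $N_i$ with $\Delta'$-preimage $i_p(\alpha)^{(0,1)} (i_p(I)R_p)^{(0,1)} \subseteq (i_p(J)R_p)^{(0,1)}$, so containment in $M'_{J,i}$ follows from a scalar comparison inside the $\Delta'$-class.

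Faithfulness and fullness of each $\Psi_i$ then follow from Theorem~\ref{thm-equivalence}: every morphism $\Psi_i(I) \to \Psi_i(J)$ in $\mathcal{A}_\pi$ arises as $\Psi(\alpha)$ for a unique $\alpha \in E$ with $\alpha I \subseteq J$ and the $M$-condition, and by the previous paragraph this $\alpha$ is exactly a morphism in $\mathcal{D}_\pi(S)$. For the bijection on isomorphism classes, any $Z$ with $\End(Z)_p \supseteq \mathcal{O}_{E'_p}$ corresponds by $\Psi$ to a pair $(I, M)$ with $I$ an $S$-ideal and $M'$ an $\mathcal{O}_{A'}$-ideal in the $\Delta'$-class of a unique $N_i$; the uniqueness characterization forces $M = M_{I,i}$, hence $Z \cong \Psi_i(I)$. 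Disjointness of the images of $\Psi_i$ and $\Psi_{i'}$ for $i \neq i'$ follows because the $\Delta'$-class of the local-local Dieudonné module is a $\Delta$-isomorphism invariant. The main obstacle I anticipate is the uniqueness claim used repeatedly, namely that an $\mathcal{O}_{A'}$-ideal in a fixed $\Delta'$-class is determined by its $\Delta'$-preimage; this relies on the fact that the $\Delta'$-stabilizer of such an ideal is exactly $\Delta'(\mathcal{O}_{E'_p}^{\times})$, so any $\Delta'$-scaling that preserves the preimage must be by a unit and hence preserves the ideal.
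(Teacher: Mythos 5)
Your proof is correct and, in its overall structure — fix representatives $N_1,\ldots,N_r$ of the $\Delta'$-classes with maximal multiplicator ring, build for each fractional $S$-ideal $I$ a pair $(I, M_{I,i})$ whose \'etale and multiplicative parts are forced by Proposition~\ref{prop-ordinary-ideals}, and push everything through the equivalence $\Psi$ of Theorem~\ref{thm-equivalence} — follows the same path as the paper. There is one genuine difference, in the essential surjectivity step. The paper takes a pair $(I,M)$ with $\Psi((I,M))\cong Z$, produces a local scalar matching $M'$ to $\Delta'(I'_p)M'_j$, and then invokes strong approximation to upgrade it to a global $\gamma\in E^\times$ that realizes an isomorphism $(I,M)\cong(I,\Delta(i_p(I))M_j)$ in $\mathcal C_\pi$. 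You instead observe that no such $\gamma$ is needed: your uniqueness claim (an $\OO_{A'}$-ideal in a fixed $\Delta'$-class is pinned down by its $\Delta'$-preimage, since that preimage is a principal $\OO_{E'_p}$-ideal whose stabilizer in $E_p'^\times$ is $\OO_{E'_p}^\times$, which acts trivially on an $\OO_{A'}$-ideal) combined with Proposition~\ref{prop-ordinary-ideals}.\ref{prop-ordinary-ideals:meet} for the $\{0\}$- and $\{1\}$-parts forces $M=M_{I,i}$ on the nose, so $Z\cong\Psi_i(I)$ with no further adjustment. This makes the strong approximation step of the paper superfluous and gives a cleaner argument. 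You also spell out the verification that $\alpha I\subseteq J$ automatically forces $\Delta(i_p(\alpha))M_{I,i}\subseteq M_{J,i}$ (via Proposition~\ref{prop-ordinary-ideals}.\ref{prop-ordinary-ideals:homs} for the \'etale/multiplicative parts and a valuation comparison for the local-local part), which the paper leaves implicit when it cites fullness/faithfulness of $\Psi$. I found no gaps.
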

\begin{proof} 
Let $M'_1,\ldots,M'_r$ be representatives of each $\Delta'$-isomorphism class of $W'_R\{F',V'\}$-ideals whose endomorphism ring contains $\OO_{E'_p}$. For each $i=1,\ldots,r$ we then have $\Delta'^{-1}(M'_i)=\delta_i\OO_{E'_p}$, for some $\delta_i\in {E'_p}^\times$. Choose $F^{\{0\}}$ and $F^{\{1\}}$ as in Proposition~\ref{prop-ordinary-ideals} so that, for each $i = 1, \ldots, r$, 
\[
M_i=W_R^{\{0\}} \oplus \Delta'(\delta_i^{-1})M'_i \oplus W_R^{\{1\}}
\]
is a $W_R\{F,V\}=W^{\{0\}}_R\{F^{\{0\}},V^{\{0\}}\} \times W'_R\{F',V'\} \times W^{\{1\}}_R\{F^{\{1\}},V^{\{1\}}\}$-ideal. It follows that $\Delta^{-1}(M_i)=R_p^{\{0\}} \times \OO_{E'_p} \times R_p^{\{1\}}$ and so, for any fractional $S$-ideal $I$, it follows that 
$\Delta^{-1}(\Delta(i_p(I))M_i)=i_p(I) \, \Delta^{-1}(M_i)=i_p(I)(R_p^{\{0\}} \times \OO_{E'_p} \times R_p^{\{1\}})=i_p(I)S_p$.  
We conclude that $(I,\Delta(i_p(I))M_i)$ is an object in $\mathcal{C}_{\pi}$.

For each $i=1,\ldots,r$, define a functor $\Psi_i\colon \mathcal D_{\pi}(S) \to \mathcal A_{\pi}$ by putting $\Psi_i(I)=\Psi((I,\Delta(i_p(I))M_i))$ and $\Psi_i(\alpha\colon I \to J)=\Psi(\alpha\colon (I,\Delta(i_p(I))M_i) \to (J,\Delta(i_p(J))M_i))$. That these functors are full and faithful follows from the corresponding facts for $\Psi$. 

Finally, take any abelian variety $Z$ in $\mathcal{A}_{\pi}$ with $\OO_{E'_p} \subseteq \End(Z)_p$. Then there is an object $(I,M)$ in $\mathcal C_{\pi}$ such that $\Psi((I,M))$ is isomorphic to $Z$. 
Write $I'_p=(i_{p}(I)\cap E'_p)R'_p$.
Since $I'_p$ is a principal $\OO_{E'_p}$-ideal, there is a $\beta \in E'_p$ such that $\Delta'(\beta) M'=\Delta'(I'_p) M'_j$
for some $j \in \{1,\ldots,r\}$. Then 
\begin{equation} \label{eq:beta}
    \beta I'_p=\beta \Delta'^{-1}(M')=I'_p\Delta'^{-1}(M'_j)=I'_p\OO_{E'_p}=I'_p.
\end{equation}
By the strong approximation theorem we can find a new element $ \beta \in E$ such that \eqref{eq:beta} still holds, and moreover $\beta_\ell \in R^\times_\ell$, for any $\ell \neq p$, and $\beta_p^* \in (R_p^{*})^\times$ for $*\in\{\{0\},\{1\}\}$. 
Then put $\gamma=\beta \gamma'$ for any $\gamma' \in (I:I)^\times$.
It follows that projecting $\Delta(i_p(\gamma)) M$ to $A'$ gives $I'_p M'_j$,
and that $\gamma I=I$. By Proposition~\ref{prop-ordinary-ideals} we get that if $N_1$ and $N_2$ are any $W^{\{0\}}_R\{F^{\{0\}},V^{\{0\}}\}$-ideals such that $(\Delta^{\{0\}})^{-1}(N_1)=(\Delta^{\{0\}})^{-1}(N_2)$ then $N_1=N_2$. The corresponding statement also holds for $W^{\{1\}}_R\{F^{\{1\}},V^{\{1\}}\}$-ideals. It follows that $\Delta(\gamma) M=\Delta(i_p(I))M_j$. Hence, $\Psi_j(I)$ is isomorphic to $\Psi((I,M)) \simeq Z$ in $\mathcal A_{\pi}$, via $\gamma \in E$. 
\end{proof}

\begin{example}\label{ex:almost_ord} 
If $\mathcal A_\pi$ consists of almost-ordinary abelian varieties, then (using the duality of Subsection~\ref{sec:duality}) there is a unique place $\nu$ above~$p$ with slope $s(\nu) \in (0,1)$ and it will have slope $1/2$.
It follows from \cite[Proposition 2.1]{OswalShankar19} that the endomorphism ring of all $W'_R\{F',V'\}$-ideals contains $\OO_{E'_p}$.  
Corollary~\ref{cor-max} can then be compared with \cite[Theorem 4.5.(3)]{OswalShankar19}. Equation~\eqref{eq-numb} shows that if $\nu$ is the place above $p$ with $s(\nu)=1/2$, then the number~$r$ of $\Delta'$-isomorphism classes of $W'_R\{F',V'\}$-ideals whose endomorphism ring contains $\mathcal O_{E'_p}$ equals $r=2$ if $e_{\nu}=1$, and equals $r=1$ if $e_{\nu}=2$. Note that Corollary~\ref{cor-max} holds also for even $q$, while \cite[Theorem 4.5.(3)]{OswalShankar19} does not. 

In the case when $e_{\nu}=1$, we can use Theorem~\ref{thm-equivalence} to describe isogenies between two abelian varieties $X_1=\Psi_1(I)$ and $X_2=\Psi_2(J)$. Say that $M'_1$ is generated as a $\OO_{A_\nu}$-ideal by $(1,1) \in A_\nu=LE_\nu \oplus LE_\nu$, and $M'_2$ by $(1,t_\nu) \in A_\nu$, where $t_\nu$ is a uniformizer of $E_\nu \subseteq LE_\nu$ (compare with Proposition~\ref{prop:FVstable_OO}). 
We see that every $\alpha \in (J:I)\cap E^\times$ induces an isogeny $X_1\to X_2$, but $\beta \in (I:J)\cap E^\times$ induces an isogeny $X_2\to X_1$ if and only if $\val_\nu(\beta)>0$.
These types of isogenies are not described in \cite{OswalShankar19}. 

Note finally that if $q$ is not prime, then $R'_p$ is singular by Proposition~\ref{prop-01}. So, in this case $R$ cannot be an endomorphism ring of an almost-ordinary abelian variety. 
\end{example}

In the following two results we will draw some consequences from Theorem~\ref{thm-equivalence} on the endomorphism rings of abelian varieties in $\mathcal A_{\pi}$.

\begin{prop} \label{prop:extension_are_end} 
Say that there is a $W'_R\{F',V'\}$-ideal $M'$ with endomorphism ring $T' \subseteq E'_p$ such that $\Delta'^{-1}(M')$ is invertible in $T'$. Let $S$ be any overorder of $R$ such that $S_p'$ contains $T'$. Then there is an abelian variety in $\mathcal A_{\pi}$ with endomorphism ring $S$.
\end{prop}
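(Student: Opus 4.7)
The plan is to apply the equivalence $\Psi:\mathcal{C}_\pi\to\mathcal{A}_\pi$ from Theorem~\ref{thm-equivalence}: it suffices to exhibit an object $(I,N)\in\mathcal{C}_\pi$ whose endomorphism ring, namely
\[
(I:I)\cap i_p^{-1}\bigl(\Delta^{-1}\{x\in A:xN\subseteq N\}\bigr),
\]
equals $S$. Setting $I:=S$ gives $(I:I)=S$, which handles all primes $\ell\neq p$. The remaining task is to construct a $W_R\{F,V\}$-ideal $N$ in $A$ satisfying the compatibility $\Delta^{-1}(N)=i_p(S)R_p=S_p$ required for $(S,N)\in\mathcal{C}_\pi$ and such that its $\Delta$-multiplicator ring equals $S_p$.

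Using the splitting from Subsection~\ref{sec:connetale}, I would decompose $N=N^{\{0\}}\oplus N'\oplus N^{\{1\}}$ and build each component separately. For $*\in\{\{0\},\{1\}\}$, choose $F^*$ as in Proposition~\ref{prop-ordinary-ideals} and set $N^*:=W\otimes_{\Z_p}S_p^*$. Parts~\ref{prop-ordinary-ideals:ext}--\ref{prop-ordinary-ideals:homs} of that proposition imply that $N^*$ is a $W_R^*\{F^*,V^*\}$-ideal with $\Delta^{*,-1}(N^*)=S_p^*$, and that its $\Delta^*$-multiplicator ring, via the $\Hom$ bijection applied to $N^*$, corresponds to $(S_p^*:S_p^*)=S_p^*$.

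The heart of the proof is the local-local component. Write $J':=\Delta'^{-1}(M')$ and let $J'^{-1}$ be its fractional $T'$-inverse, so $J'J'^{-1}=T'$; invertibility also forces $(J':J')=T'$. I would first rescale by setting $\widetilde{N}:=\Delta'(J'^{-1})M'$, which remains $F',V'$-stable because $\Delta'(E_p')$ commutes with $F'$ and $V'$. The identity $\Delta'(J')\widetilde{N}=\Delta'(T')M'\subseteq M'$, combined with $(J':J')=T'$, yields $\Delta'^{-1}(\widetilde{N})=T'$ by a cancellation argument; the same reasoning shows that the $\Delta'$-multiplicator ring of $\widetilde{N}$ is also $T'$. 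Next set $N':=\Delta'(S_p')\widetilde{N}$, which is again $F',V'$-stable. Since $1\in\widetilde{N}$, the inclusion $S_p'\subseteq\Delta'^{-1}(N')$ is clear; the reverse inclusion and the equality of the $\Delta'$-multiplicator ring with $S_p'$ follow by the same cancellation strategy, using additionally that $S_p'T'=S_p'$ and $(S_p':T')=S_p'$.

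Assembling $N:=N^{\{0\}}\oplus N'\oplus N^{\{1\}}$, we obtain $\Delta^{-1}(N)=S_p^{\{0\}}\times S_p'\times S_p^{\{1\}}=S_p$, so $(S,N)\in\mathcal{C}_\pi$, and its endomorphism ring equals $S\cap i_p^{-1}(S_p)=S$. Then $X:=\Psi((S,N))$ is an abelian variety in $\mathcal{A}_\pi$ with $\End(X)=S$. The principal obstacle is the local-local step: verifying that rescaling $M'$ by $\Delta'(J'^{-1})$ and then enlarging via $\Delta'(S_p')$ modifies the $\Delta'$-preimage and the $\Delta'$-multiplicator ring in a controllable, parallel manner. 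The invertibility of $J'$ over $T'$ is essential precisely because it provides the cancellation identity $(J':J')=T'$, without which neither computation would be tight enough to pin down exactly $S_p'$.
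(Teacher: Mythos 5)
Your construction succeeds and is close in spirit to the paper's, but the bookkeeping is genuinely different. The paper keeps $M'$ untouched, forms $M=(W_R^{\{0\}}\otimes R_p^{\{0\}})\oplus M'\oplus(W_R^{\{1\}}\otimes R_p^{\{1\}})$, and then invokes an existence result (\cite[Theorem 9.4.9, Lemma 9.5.3]{JVQuat}) to produce a global fractional $R$-ideal $I$ with $i_\ell(I)R_\ell=R_\ell$ for $\ell\neq p$ and $i_p(I)R_p=\Delta^{-1}(M)$; the object is then $(SI,\Delta(i_p(S))M)$, and the equality $\End=S$ comes from $(SI:SI)=S$, which uses that $I$ is locally principal (this is where invertibility of $J'=\Delta'^{-1}(M')$ in $T'$ enters). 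You instead push the twist to the Dieudonn\'e side: you rescale $M'$ by $\Delta'(J'^{-1})$ so that the $\Delta'$-preimage becomes $T'$, and you can then take the global ideal to be simply $S$. This avoids the citation for the existence of $I$, at the price of the rescaling step; your first ``cancellation'' (multiplying back by $\Delta'(J')$ and using $(J':J')=T'$) is correct and is exactly where the invertibility hypothesis is used, mirroring its use in the paper. Note also that your verification that the $\Delta'$-multiplicator ring of $N'$ equals $S'_p$ is superfluous: since your global ideal is $S$, any endomorphism of $(S,N)$ lies in $(S:S)=S$, so only the trivial containment $S_p\subseteq\{y:\Delta(y)N\subseteq N\}$ and the compatibility $\Delta^{-1}(N)=S_p$ are needed -- this is precisely how the paper pins down $\End=S$ via $(SI:SI)=S$.

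One caution on the remaining step. The equality $\Delta'^{-1}\bigl(\Delta'(S'_p)\widetilde N\bigr)=S'_p$ is not obtained by the ``same cancellation strategy'': unlike $J'$, the ring $S'_p$ is not an invertible ideal you can cancel (note $S'_pS'_p=S'_p$), and the natural conductor argument only gives $z\in\bigl(T':(T':S'_p)\bigr)$, which need not equal $S'_p$ for non-divisorial situations; the auxiliary identities $S'_pT'=S'_p$ and $(S'_p:T')=S'_p$ do not close this. To be fair, this is exactly the identity the paper itself asserts without proof, in the form $\Delta^{-1}(\Delta(i_p(S))M)=i_p(S)\Delta^{-1}(M)$ (and your version is equivalent to it, since $J'$, being invertible over the semilocal ring $T'$, is principal), so your argument is on the same footing as the paper's here; but if you want a self-contained proof you must justify this containment rather than label it a cancellation.
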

\begin{proof} 
Choose $F^{\{0\}}$ and $F^{\{1\}}$ as in Proposition~\ref{prop-ordinary-ideals} and let $M$ be the $W_R\{F,V\}=W^{\{0\}}_R\{F^{\{0\}},V^{\{0\}}\} \times W'_R\{F',V'\} \times W^{\{1\}}_R\{F^{\{1\}},V^{\{1\}}\}$-ideal $(W_R^{\{0\}} \otimes R_p^{\{0\}}) \oplus M' \oplus (W_R^{\{1\}} \otimes R_p^{\{1\}})$. Fix the fractional $R$-ideal~$I$ such that $i_{\ell}(I)R_{\ell} = R_{\ell}$ for all $\ell \neq p$ and such that $i_p(I)R_p=\Delta^{-1}(M)$, whose existence is guaranteed by \cite[Theorem 9.4.9, Lemma 9.5.3]{JVQuat}. Since $\Delta^{-1}(\Delta(i_p(S))M)=i_p(S)\Delta^{-1}(M)=i_p(SI)R_p$ we have that $(SI,\Delta(i_p(S))M) \in \mathcal C_\pi$, 
and since $I$ is locally principal for all~$\ell$ (including $p$), we have that $(SI:SI)=(S:S)=S$.
The abelian variety $\Psi((SI,\Delta(i_p(S))M))$ then has endomorphism ring $S$. 
\end{proof}

\begin{cor}\label{cor:min_end_ell_01}
    Let $S$ be any overorder of $R$.
    There exists an abelian variety in the isogeny class whose endomorphism ring $T$ satisfies $S_\ell \simeq T_\ell$ for every $\ell \neq p$ and $S^{*}_p \simeq T^{*}_p$ for $*$ equal to $\{0\}$ or $\{1\}$.
\end{cor}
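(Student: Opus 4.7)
The plan is to apply Proposition~\ref{prop:extension_are_end} with a carefully chosen pair $(M', \tilde S)$, where $\tilde S$ is an overorder of $R$ in $E$ whose localizations match those of $S$ at every prime $\ell \neq p$ and at the \'etale and multiplicative components at $p$, while $\tilde S'_p$ is forced to be the maximal order $\mathcal O_{E'_p}$. The abelian variety with endomorphism ring $\tilde S$ produced by Proposition~\ref{prop:extension_are_end} will then be the desired variety, with $T := \tilde S$.

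To handle the local-local piece, I would invoke Waterhouse's classification (Proposition~\ref{prop:FVstable_OO}) to produce a $W'_R\{F',V'\}$-ideal $M'$ whose multiplicator ring in $A'$ equals $\mathcal O_{A'}$. Under the identification of Lemma~\ref{lem-homs}, its $E'_p$-endomorphism ring is then $T' = \Delta'^{-1}(\mathcal O_{A'}) = \mathcal O_{E'_p}$. Since $\mathcal O_{E'_p}$ is a product of DVRs, every fractional $\mathcal O_{E'_p}$-ideal is invertible, and in particular $\Delta'^{-1}(M')$ is invertible in $T'$, verifying the hypothesis of Proposition~\ref{prop:extension_are_end}. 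In the degenerate case when $\mathcal A_\pi$ is ordinary, the $'$-components vanish and one may take $M' = 0$ and $T' = 0$; the argument then proceeds trivially.

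To construct $\tilde S$, I use the local-global correspondence for orders in an \'etale $\mathbb Q$-algebra: any compatible specification of local overorders determines a unique global overorder, realizable as the intersection $E \cap \prod_\ell \tilde S_\ell$ inside $E \otimes_{\mathbb Z} \widehat{\mathbb Z}$. Setting $\tilde S_\ell := S_\ell$ for every $\ell \neq p$, and decomposing $\tilde S_p := S^{\{0\}}_p \oplus \mathcal O_{E'_p} \oplus S^{\{1\}}_p$ according to Notation~\ref{not:splitR}, yields a $\mathbb Z$-order $\tilde S \supseteq S \supseteq R$ with the prescribed local parts. Since $\tilde S'_p = \mathcal O_{E'_p} \supseteq T'$ by construction, Proposition~\ref{prop:extension_are_end} produces an abelian variety in $\mathcal A_\pi$ whose endomorphism ring $T := \tilde S$ satisfies $T_\ell \simeq S_\ell$ for every $\ell \neq p$ and $T^*_p \simeq S^*_p$ for $* \in \{\{0\},\{1\}\}$, as required. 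The only mild subtlety is cleanly justifying that the prescribed local data really defines a $\mathbb Z$-order; this is standard but should be spelled out in the write-up.
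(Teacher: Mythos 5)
Your proposal is correct and takes essentially the same route as the paper: both arguments take a $W'_R\{F',V'\}$-ideal $M'$ with maximal endomorphism ring $\mathcal{O}_{E'_p}$ (Proposition~\ref{prop:FVstable_OO}) and feed it into Proposition~\ref{prop:extension_are_end}. The only difference is packaging: you first glue an overorder $\tilde S$ agreeing with $S$ at all $\ell\neq p$ and at the $\{0\}$- and $\{1\}$-parts but equal to $\mathcal{O}_{E'_p}$ at the local-local part, so that the proposition applies verbatim and gives $T=\tilde S$, whereas the paper simply reruns the construction $\Psi((SI,\Delta(i_p(S))M))$ from the proposition's proof with $S$ itself and observes that the resulting endomorphism ring has the required local components; the two constructions yield the same $T$.
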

\begin{proof} 
    It follows from \cite[Theorem~5.1]{Wat69} that there is a $W'_R\{F',V'\}$-ideal $M'$ with maximal endomorphism ring. 
    The abelian variety $\Psi((SI,\Delta(i_p(S))M))$ constructed as in the proof of Proposition~\ref{prop:extension_are_end} then has an endomorphism ring with the wanted properties.
\end{proof}

\section{Computing fractional \texorpdfstring{$W_R\{F,V\}$-ideals}{WRFV2} }
\label{sec:aftergreenredblue} 
The goal of this section is to provide an effective method to compute representatives of the $\Delta$-isomorphism classes of $W_R\{F,V\}$-ideals.
We start by first observing that the content of Section~\ref{sec:tate} allows us to obtain the \'etale and multiplicative parts without extra effort.
Indeed, by the discussion after Notation~\ref{not:splitR}, the decomposition $R_p = R_p^{\{0\}} \times R_p'  \times R_p^{\{1\}}$ induces a decomposition of $\Delta$-isomorphism classes of $W\{F,V\}$-ideals, which yields a natural bijection 
\begin{equation}\label{eq:WRFVdec}
\begin{split}
\{ \Delta\text{-isom.~classes of }  & W_R\{F,V\}\text{-ideals} \} \longleftrightarrow \\
& \prod_{*\in \{ \{0\},(0,1),\{1\} \}} \{ \Delta^*\text{-isom.~classes of }W_R^*\{F^*,V^*\}\text{-ideals} \}.
\end{split}
\end{equation}
The next proposition states that for $* \in \{ \{0\},\{1\}\}$ the computation of the terms on the right hand side of~\eqref{eq:WRFVdec} reduces to computations of known objects discussed in Section~\ref{sec:tate}.

\begin{prop}\label{prop:Rp01}
    For $*\in \{\{0\},\{1\}\}$ we have a natural bijection
    \begin{equation}\label{eq:coridealsplit}    
     \{ \Delta^*\text{-isom.~classes of }W_R^*\{F^*,V^*\}\text{-ideals} \} \longleftrightarrow 
    \prod_{ \frp_\nu \in \mathcal{P}_{R_p}, s(\nu) \in * } \cW_{\frp_\nu}(R),
    \end{equation}
    where denotes $\frp_\nu$ the intersection with $R_p$ of the maximal ideal inducing $\nu$. 
\end{prop}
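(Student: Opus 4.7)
The strategy is to chain three bijections corresponding to the three horizontal arrows in
\[
\{\Delta^*\text{-isom classes of }W_R^*\{F^*,V^*\}\text{-ideals}\} \longleftrightarrow \{\text{isom classes of }R_p^*\text{-ideals}\} \longleftrightarrow \prod_{\frp \in \mathcal{P}_{R_p}^*}\cW(R_\frp) \longleftrightarrow \prod_{\frp \in \mathcal{P}_{R_p}^*}\cW_\frp(R).
\]

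First, I would invoke Corollary~\ref{cor-01}. Since $* \in \{\{0\},\{1\}\}$, that corollary provides (for a suitable choice of $F^*$) an explicit bijection between isomorphism classes of $R_p^*$-ideals and $\Delta^*$-isomorphism classes of $W_R^*\{F^*,V^*\}$-ideals, sending $[I]$ to $[W \otimes_{\Z_p} I]$. This reduces the problem to a purely commutative statement about fractional $R_p^*$-ideals.

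Next, I would use the decomposition $R_p^* = \prod_{\frp \in \mathcal{P}_{R_p}^*} R_\frp$ from Notation~\ref{not:splitR}, where $R_\frp$ denotes the completion of $R$ at $\frp$ (equivalently, the localization of the complete semilocal ring $R_p$ at one of its maximal ideals). Since this is a finite product of local rings, every fractional $R_p^*$-ideal decomposes uniquely, up to isomorphism, as a product of fractional $R_\frp$-ideals, one for each $\frp \in \mathcal{P}_{R_p}^*$. This gives a bijection between isomorphism classes of fractional $R_p^*$-ideals and $\prod_{\frp \in \mathcal{P}_{R_p}^*} \cW(R_\frp)$, arguing exactly as in the proof of Lemma~\ref{lem:ell_vs_frl}.

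Finally, I would apply Lemma~\ref{rmk:WRlocalize}\ref{rmk:WRlocalize:2} for each $\frp \in \mathcal{P}_{R_p}^*$ to identify $\cW(R_\frp)$ canonically with $\cW_\frp(R)$, by realizing each class in $\cW(R_\frp)$ as the class of a globally defined fractional $R$-ideal $\widetilde{I}$ with $\widetilde{I}_\frp = I$. Composing the three bijections and tracking that the resulting correspondence is compatible with scaling by elements of $\Delta(E_p^*)$ on one side and by elements of $E$ on the other gives the required natural bijection.

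The verifications are mostly bookkeeping; the only slightly delicate point is making sure the bijection from Corollary~\ref{cor-01} respects the decomposition into $\frp$-components, but this is immediate because both the functor $I \mapsto W \otimes_{\Z_p} I$ and the identification $\Hom_{W_R^*\{F^*,V^*\}} \simeq \Hom_{\Delta^*}$ are $R_p^*$-linear and hence split according to the idempotent decomposition of $R_p^*$.
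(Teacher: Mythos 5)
Your proposal is correct and follows essentially the same route as the paper: reduce to isomorphism classes of $R_p^*$-ideals via Corollary~\ref{cor-01}, then peel off one $\cW_{\frp}(R)$-factor per maximal ideal. The only cosmetic difference is that you unpack the local-to-global commutative algebra explicitly (product decomposition of $R_p^*$ followed by Lemma~\ref{rmk:WRlocalize}\ref{rmk:WRlocalize:2}), whereas the paper compresses these two steps into a single invocation of Remark~\ref{rmk:W_R_semilocal} and Proposition~\ref{prop:ell_to_prod_frl}; the underlying argument is the same.
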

\begin{proof}
    By Corollary~\ref{cor-01}, the left-hand side of~\eqref{eq:coridealsplit} is in bijection with 
    $ \{ \Delta^*\text{-isom.~classes of }R_p^*\text{-ideals} \}$, which is just $\cW(R_p^*)$ by Remark~\ref{rmk:W_R_semilocal}.
    Since $R_p^*$ is the direct product of the rings $R_{\frp_\nu}$ for $\frp_\nu \in  \mathcal P_R$ such that $s(\nu) \in *$,
    the latter set is in bijection with the right hand side of~\eqref{eq:coridealsplit} by Proposition~\ref{prop:ell_to_prod_frl}.
\end{proof}

In view of Proposition \ref{prop:Rp01}, we restrict our attention to computing the $\Delta'$-isomorphism classes of $W_R'\{F',V'\}$-ideals. 
We can view $E'_p$ as a subset of $E_p$ and $A'$ as a subset of $A$, so that $\Delta\colon E_p \to A$ restricted to $E'_p$ is precisely $\Delta'$.
We say that two fractional $W'_R$-ideals $I$ and $J$ are $\Delta'$-isomorphic if there exists a non-zero divisor $\gamma \in R'_p$ such that $\Delta'(\gamma)I =J$.
Note that $\Delta'$-isomorphism implies isomorphism as $R'_p$-modules, but that conversely two fractional $W'_R$-ideals can be isomorphic as $R'_p$-modules while not being $\Delta'$-isomorphic. 
The computation of $\Delta'$-isomorphism classes is divided into four steps:
\begin{enumerate}
    \item Globalize the problem.
    \item Compute $W'_R$-isomorphism classes of fractional $W'_R$-ideals.
    \item Compute the partition of each $W'_R$-isomorphism class into $\Delta'$-isomorphism classes of fractional $W'_R$-ideals.
    \item Determine which $\Delta'$-isomorphism classes of fractional $W'_R$-ideals are stable under the action of $F$ and $V$, that is, which are $W'_R\{F',V'\}$-ideals.
\end{enumerate}
The four steps will be detailed in Subsections \ref{sec:global}, \ref{sec:Step1}, \ref{sec:Step2} and \ref{sec:Step3}, respectively.
While Step 1 and Step~2 are essentially reduction steps, Steps 3 and 4 are truly novel, to the best of our knowledge.
Details are given in the beginning of each subsection.

\subsection{Step 1: Global setting}
\label{sec:global}
In this preliminary first step we reduce the computation of the $\Delta'$-isomorphism classes into a problem which involves only global objects.
This process, which comes at the cost of heavier notation and some technical lemmas, will allow us to obtain exact algorithms that are not affected by the precision issues typical of $p$-adic computations.

Recall that $L$ is an unramified extension of degree $a$ of $\Q_p$.
Let $c(x)$ be a lift to $\Z[x]$ of any irreducible factor of degree $a$ of $x^{q-1}-1$ over $\F_p[x]$.
We have $L\simeq \Q_p[x]/c(x)$.
Let $\widetilde{L}=\Q[x]/c(x)$ and denote by $\zeta$ the class of $x$ in $\widetilde L$.
Note that $\widetilde{L} \otimes_\Q \Q_p \simeq L$.
Moreover $\widetilde L$ has a unique place above $p$, which is unramified.

Let $g_1(y),\ldots,g_r(y)$ be the irreducible factors over $\widetilde L[y]$ of the characteristic polynomial $h(y)$ of~$\pi$ over $\Q$.
Define 
\begin{equation}\label{eq:AWtilde}
\widetilde A = \prod_{i=1}^r \frac{\widetilde L[y]}{g_i(y)} \quad\text{and}\quad \widetilde W_R = \prod_{i=1}^r \frac{\OO_{\widetilde L}[y]}{g_i(y)}. 
\end{equation}
The association $\pi \mapsto (y+(g_i(y)))_{i=1, \ldots, r}$ induces an embedding $\widetilde\Delta \colon E \to \widetilde A$.

\begin{remark}\label{rmk:tilde}\ 
    \begin{enumerate}[(i)]
        \item We have isomorphisms $\widetilde A \otimes_\Q \Q_p \simeq A $ and $\widetilde W_R \otimes_\Z \Z_p \simeq W_R $ giving inclusions $\widetilde W_R \subseteq \widetilde A \subseteq A$.
        \item \label{rmk:tilde:ideals} For every fractional $W_R$-ideal $I \subseteq A$, there is a unique fractional $\widetilde W_R$-ideal $\widetilde I$ such that $\widetilde I \otimes_\Z \Z_p = I$ and $\widetilde I \otimes_\Z \Z_\ell = \OO_{\widetilde A} \otimes_\Z \Z_\ell$ for every prime $\ell\neq p$; see for example \cite[Theorem 9.4.9, Lemma 9.5.3]{JVQuat}, as in Proposition~\ref{prop:extension_are_end}.
        Also, if $I\subseteq J$ are fractional $W_R$-ideals then
        \[ \frac{\widetilde J}{\widetilde I} \simeq \frac{J}{I} \]
        is a finite $W_R$-module annihilated by a power of $p$.
        \item Let $\frp$ be a maximal ideal of $W_R$ and $\widetilde\frp$ be as in \ref{rmk:tilde:ideals}. Then $\widetilde W_{R,\widetilde \frP} \simeq W_{R,\frP}$.
        \item \label{rmk:tilde:sigma}
        Assume for a moment that $\widetilde L$ is a normal extension of $\Q$.
        Let $\widetilde \sigma_L$ be any generator of the decomposition group of the maximal ideal $p\OO_{\widetilde L}$ of $\OO_{\widetilde L}$.
        Then $\widetilde\sigma_L$ extends to the Frobenius $\sigma$ of $L$.
        Let $\pi_A$ denote the element $\widetilde \Delta(\pi)$.
        Since $h(x)$ is square-free, the elements $1_A, \pi_A, \ldots, \pi_A^{\deg(h)-1}$ form a $\widetilde L$-basis of $\widetilde A$.
        In other words, $\widetilde A$ can be described as
        \begin{equation}\label{eq:tildeA_powerbasis}
            \widetilde A = 1_A \cdot \widetilde L \oplus \pi_A \cdot \widetilde L \oplus \ldots \oplus \pi_A^{\deg(h)-1} \cdot \widetilde L.    
        \end{equation}
        Using the presentation given in Equation~\eqref{eq:tildeA_powerbasis}, define $\widetilde\sigma\colon \widetilde A \to \widetilde A$ by fixing $\pi_A$ and acting on the $\widetilde L$-coefficients as $\widetilde\sigma_L$.
        Observe that $\widetilde \sigma$ extends to the automorphism $\sigma$ of $A$.

        If $\widetilde L$ is not normal we cannot compute the decomposition group of $p\OO_{\widetilde L}$ or $\widetilde\sigma$.
        Nevertheless, we will only need the action of $\sigma$ on finite quotients of $\OO_{\widetilde A}$ (or of $\OO_{\widetilde A}^\times$). We will avoid precision errors by carefully approximating $\sigma$ on such quotients, see Remark~\ref{rmk:alg3_sigma_fin_quot}.
        \item \label{rmk:tilde:primes_above_nu}
        Let $\nu$ be a place of $E_p$.
        Then we have 
        \[ \OO_{A_\nu} = \underbrace{\OO_{LE_\nu} \times \ldots \times \OO_{LE_\nu}}_{g_\nu-\text{times}}, \]
        cf.~Equation~\eqref{eq:splittingA}. The unique maximal ideal of $\mathcal \OO_{E_\nu}$ is denoted by $\frp_{E_\nu}$. A maximal ideal of $\OO_{A_\nu}$ is equal to the unique maximal ideal $\frp_{LE_{\nu}}=\frp_{E_{\nu}} \OO_{LE_{\nu}}$ in exactly one of the $g_{\nu}$ factors, and to the ideal generated by $1$ in all other factors.
        In particular, there are exactly $g_\nu$ maximal ideals $\frP_{\nu,1},\ldots,\frP_{\nu,g_\nu}$ of $\OO_{A_\nu}$ extending $\nu$, all with the same ramification index. Let $t_1,\ldots,t_{g_\nu}$ be some corresponding uniformizers.
        If $t_\nu \in E_p$ is a uniformizer for $\nu$ then its image in $A_\nu$ via the embedding induced by $\Delta$ is $(t_\nu,\ldots,t_\nu)=(v_1t_1,\ldots,v_{g_\nu}t_{g_\nu})$ for units $v_i\in\OO_{LE_\nu}^\times$. 
        For each $i$, consider the maximal ideal $\widetilde{\frP}_{\nu,i}$ of~$\OO_{\widetilde{A}}$ built using~\ref{rmk:tilde:ideals}. 
        Any lift of a basis element of the $\OO_{\widetilde A}/\widetilde{\frP}_{\nu,i}$-vector space $\widetilde{\frP}_{\nu,i}/\widetilde{\frP}_{\nu,i}^2$ 
        will map to a uniformizer of $\OO_{\widetilde{A},\widetilde{\frP}_{\nu,i}}\simeq \OO_{A,\frP_{\nu,i}}$.
        Such lift can be taken to be a unit in the completions at the other maximal ideals above $\nu$.
        \item \label{rmk:tilde:prime} Let $S'$ be an order in $A'$, let $I'$ be a fractional $S'$-ideal, and let $\frp'$ be a maximal ideal of $S'$.
        Consider the order in $A=A^{\{0\}}\times A' \times A^{\{1\}}$ given by 
        $S=\OO_{A^{\{0\}}}\times S' \times \OO_{A^{\{1\}}}$
        and its maximal ideal 
        $\frp=\OO_{A^{\{0\}}}\times \frp' \times \OO_{A^{\{1\}}}$.
        Let $I= \OO_{A^{\{0\}}}\times I' \times \OO_{A^{\{1\}}}$.
        Define $\widetilde S$ as the order in $\widetilde A$ satisfying $\widetilde S \otimes_{\Z} \Z_p \simeq S$, and $\widetilde \frp$ as the prime of $\widetilde S$ such that  $\widetilde \frp \otimes_{\Z} \Z_p \simeq \frp$, and $\widetilde I$ as the fractional $\widetilde S$-ideal such that  $\widetilde I \otimes_{\Z} \Z_p \simeq I$, all using (ii).
        Then we have canonical isomorphisms 
        \[ \widetilde S_{\widetilde \frp} \simeq S_\frp \simeq S'_{\frp'} \quad\text{and}\quad \widetilde I_{\widetilde \frp} \simeq I_\frp \simeq I'_{\frp'}.\]
    \end{enumerate}
\end{remark}

\subsection{Step 2: \texorpdfstring{$W'_R$-isomorphism classes}{WpR} }
\label{sec:Step1}
In this step, we compute global representatives of the isomorphism classes of fractional $W'_R$-ideals.
Lemma~\ref{lem:W'R_prod_global} creates a bridge to the results contained in Section~\ref{sec:tate}, allowing us to use \cite[\texttt{ComputeW}]{Mar23_Loc} to complete the task.

\begin{lem}\label{lem:W'R_prod_global} Let $\mathcal S$ be the set of primes $\frP$ of $W'_R$.
    Then we have a natural bijection
    \[ \cW(W'_R) \longleftrightarrow \prod_{\frP \in \mathcal S} \cW_{\widetilde{\frP}} (\widetilde W_R).\]
\end{lem}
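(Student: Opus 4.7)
The plan is to chain together three canonical bijections: a product decomposition of $\cW(W'_R)$ over the maximal ideals of $W'_R$, an identification of each local factor with $\cW$ of the completion of $\widetilde W_R$ at the corresponding prime, and Lemma~\ref{rmk:WRlocalize}.(ii) to globalize the latter.

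First, I would use that $W'_R$ is a complete semilocal $\Z_p$-order, so it decomposes canonically as $W'_R = \prod_{\frP \in \mathcal{S}} W'_{R,\frP}$. This induces a decomposition $I = \prod_\frP I_\frP$ of any fractional $W'_R$-ideal, with each $I_\frP$ a fractional $W'_{R,\frP}$-ideal. By Remark~\ref{rmk:W_R_semilocal}, the set $\cW(W'_R)$ is exactly the set of $W'_R$-linear isomorphism classes of fractional $W'_R$-ideals, so this product decomposition yields a canonical bijection $\cW(W'_R) \longleftrightarrow \prod_{\frP \in \mathcal{S}} \cW(W'_{R,\frP})$.

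Next, for each $\frP \in \mathcal{S}$, Remark~\ref{rmk:tilde}.(vi) (applied with $S' = W'_R$ and $S = W_R$) provides a canonical isomorphism $\widetilde W_{R,\widetilde\frP} \simeq W'_{R,\frP}$ between the completion at the global prime and the local factor, hence $\cW(W'_{R,\frP}) \longleftrightarrow \cW(\widetilde W_{R,\widetilde\frP})$. Finally, Lemma~\ref{rmk:WRlocalize}.(ii) applied to the $\Z$-order $\widetilde W_R$ at its maximal ideal $\widetilde\frP$ gives the bijection $\cW_{\widetilde\frP}(\widetilde W_R) \longleftrightarrow \cW(\widetilde W_{R,\widetilde\frP})$. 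Composing these three bijections over all $\frP \in \mathcal{S}$ yields the claim.

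I do not expect any real obstacle; the proof is just an assembly of earlier results. The only point deserving a line of verification is naturality, namely that the composite sends the class of a fractional $W'_R$-ideal $I$ to the tuple whose $\frP$-th entry is $[\widetilde{I_\frP}]_{\widetilde\frP}$, where $\widetilde{I_\frP}$ is the global lift of $I_\frP$ produced in Lemma~\ref{rmk:WRlocalize}.(ii); this is immediate from the constructions of the cited bijections.
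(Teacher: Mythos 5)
Your proof is correct and follows essentially the same route as the paper's: decompose $W'_R$ as a product of the completions $W_{R,\frP}$, identify each with $\widetilde W_{R,\widetilde\frP}$ via Remark~\ref{rmk:tilde}, and then globalize via Lemma~\ref{rmk:WRlocalize}. The only quibble is that your invocation of Remark~\ref{rmk:tilde}.(vi) with ``$S=W_R$'' does not quite match its hypothesis (which requires $S=\OO_{A^{\{0\}}}\times S'\times\OO_{A^{\{1\}}}$); the clean reference is the item stating $\widetilde W_{R,\widetilde\frP}\simeq W_{R,\frP}$, which is what the paper cites.
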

\begin{proof}
    Note that $W'_R$ is canonically isomorphic to the direct product of the completions $W_{R,\frP}$ where~$\frP$ ranges over the set $\mathcal{S}$.
    The statement follows from the canonical isomorphism $\widetilde W_{R,\widetilde \frP} \simeq W_{R,\frP}$, see Remark~\ref{rmk:tilde}.(iii), and from Lemma~\ref{rmk:WRlocalize}.\ref{rmk:WRlocalize:2}.
\end{proof}

Lemma~\ref{lem:W'R_prod_global}, when combined with Proposition \ref{prop:global_to_frl}, allow us to 
reduce the computation of the isomorphism classes of fractional $W'_R$-ideals to the computation of $\cW(\widetilde W_R + \widetilde \frp_i^k \OO_{\widetilde A})$, where $\widetilde \frp_i$ runs over the finite set of maximal ideals of $\widetilde W_R$ that lie below the maximal ideals of $\OO_{\widetilde A}$ extending the places of $E_p$ with slope in $(0,1)$.
For each $i$, a set of representatives of $\cW(\widetilde W_R + \widetilde \frp_i^k \OO_{\widetilde A})$ can be compute using for example \cite[\texttt{ComputeW}]{Mar23_Loc}.
One can glue these representatives using Lemma~\ref{lem:glue_local_parts}, so obtaining a set of fractional $\widetilde W_R$-ideals representing $\cW(W'_R)$.

\subsection{Step 3: \texorpdfstring{$\Delta'$-isomorphism classes}{Dp}}
\label{sec:Step2}
The output of the Step 2 consists of $W'_R$-linear isomorphism classes.
Our final goal of computing isomorphism classes of $W'_R\{F',V'\}$-ideals requires us to work modulo $\Delta'$-linear morphisms, that is, a finer equivalence relation.
This third step address this point.
Given a fractional $W'_R$-ideal $I$, we will denote by $[I]_{W'_R}$ its class in $\cW(W'_R)$ and by $[I]_{\Delta'}$ its $\Delta'$-isomorphism class.
Since every $\Delta'$-isomorphism is also a $W'_R$-linear isomorphism, we see that $[I]_{W'_R}$ splits into a disjoint union of $\Delta'$-isomorphism classes.

We start from the combinatorial classification of isomorphism classes of $W_R \{F,V\}$-ideals given by Waterhouse in \cite[Theorem~5.1]{Wat69}, where the author only considers ideals with maximal endomorphism ring.
This result is readily adapted to computing only the local-local part, see Proposition~\ref{prop:FVstable_OO}.
We denote by $e_I$ the extension map $[I']_{\Delta'} \mapsto  [I'\OO_{A'}]_{\Delta'}$ from the set of $\Delta'$-isomorphism classes of fractional $W'_R$-ideals that are $W'_R$-isomorphic to $I$, to the set of $\Delta'$-isomorphism classes of fractional $\OO_{A'}$-ideals.
In Proposition \ref{prop-fibers}, we describe each fiber of the extension map $e_I$ as an orbit space under the action of a quotient of the unit group $\OO_{A'}^\times$.
The main difficulty in this step is to compute global representatives of a certain finite quotient of the unit group $\OO_{A'}^\times$.
This requires us to carefully approximate the action of the Frobenius $\sigma$ of $L$ on such a quotient, see Lemma~\ref{lem:quot_stab_sigma} and Remark~\ref{rmk:units_global}.
Next, we observe in Proposition~\ref{prop:ext_of_FV_stable} that the extension of an $W'_R\{F',V'\}$-ideal is an $W'_R\{F',V'\}$-ideal.
Finally, Algorithm~\ref{alg:Deltaclasses_OO} wraps up the content of this subsection.
\begin{prop} \label{prop-fibers}
    Let $I$ be a fractional $W'_R$-ideal.
    Let $S=(I:I)$ and $J=I\OO_{A'}$.
    The association $\gamma \mapsto \gamma I$ for $\gamma \in \OO_{A'}^{\times}$ induces a free and transitive group action of $\OO_{A'}^\times/S^{\times}\Delta'(\mathcal O_{E'_p}^\times)$ on the fiber $e_I^{-1}([J]_{\Delta'})$.
\end{prop}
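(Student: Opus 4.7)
The plan is to carry out the three standard verifications for a group action turned free-transitive: (a) the formula $\gamma\cdot [I']_{\Delta'}=[\gamma I']_{\Delta'}$ defines an action of $\OO_{A'}^\times$ on $e_I^{-1}([J]_{\Delta'})$; (b) this action descends to a transitive action of the quotient $\OO_{A'}^\times/S^\times\Delta'(\OO_{E'_p}^\times)$; (c) the stabilizer of any fiber element is exactly $S^\times\Delta'(\OO_{E'_p}^\times)$, giving freeness. Since $A'$ is commutative, multiplication by $\gamma\in\OO_{A'}^\times$ commutes with multiplication by $\Delta'(\delta)$, so $\gamma\Delta'(\delta)I'=\Delta'(\delta)\gamma I'$; this shows the formula is well-defined on $\Delta'$-classes. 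Moreover $(\gamma I')\OO_{A'}=\gamma(I'\OO_{A'})=I'\OO_{A'}$ because $\gamma$ is an $\OO_{A'}$-unit, so the extension $e_I$ is unchanged and the fiber is preserved.

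For transitivity, suppose $[I']_{\Delta'},[I'']_{\Delta'}\in e_I^{-1}([J]_{\Delta'})$. Since both are $W'_R$-isomorphic to $I$, they are $W'_R$-isomorphic to each other, so $I''=\alpha I'$ for some $\alpha\in(A')^\times$ (multiplication by such $\alpha$ is automatically $W'_R$-linear by commutativity). Because $[I'\OO_{A'}]_{\Delta'}=[I''\OO_{A'}]_{\Delta'}=[\alpha I'\OO_{A'}]_{\Delta'}$, there exists $\delta\in(E'_p)^\times$ with $\Delta'(\delta)I'\OO_{A'}=\alpha I'\OO_{A'}$, so $\Delta'(\delta)^{-1}\alpha$ lies in the multiplicator ring of the fractional $\OO_{A'}$-ideal $I'\OO_{A'}$. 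Since $\OO_{A'}$ is the maximal order in the étale algebra $A'$, every fractional $\OO_{A'}$-ideal has multiplicator ring $\OO_{A'}$, and hence $\gamma:=\Delta'(\delta)^{-1}\alpha\in\OO_{A'}^\times$. Then $\gamma I'=\Delta'(\delta)^{-1}I''$, so $[\gamma I']_{\Delta'}=[I'']_{\Delta'}$, proving transitivity.

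For the stabilizer, suppose $\gamma\in\OO_{A'}^\times$ satisfies $\gamma I'=\Delta'(\delta)I'$ for some $\delta\in(E'_p)^\times$. Writing $S':=(I':I')$, we note that $S'=S$ because $I'$ is $W'_R$-isomorphic to $I$ and $A'$ is commutative. Then $\Delta'(\delta)^{-1}\gamma$ fixes $I'$, so $\Delta'(\delta)^{-1}\gamma\in S^\times$. Since $S\subseteq\OO_{A'}$, this also forces $\Delta'(\delta)=\gamma\cdot(\Delta'(\delta)^{-1}\gamma)^{-1}\in\OO_{A'}^\times$, and therefore $\Delta'(\delta)\in\Delta'(E'_p)\cap\OO_{A'}^\times=\Delta'(\OO_{E'_p}^\times)$, where the last equality uses that $\OO_{E'_p}$ is the maximal order of $E'_p$. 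Hence $\gamma\in S^\times\Delta'(\OO_{E'_p}^\times)$, and the reverse inclusion is immediate because $s\in S^\times$ acts trivially on $I'$ and $\Delta'(\delta)I'$ is $\Delta'$-isomorphic to $I'$ by definition. This identifies the stabilizer and completes the proof.

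The main technical step is the transitivity argument: it is the only place one needs the input that the multiplicator ring of every fractional $\OO_{A'}$-ideal equals $\OO_{A'}$ (so that the element $\Delta'(\delta)^{-1}\alpha$ lands in $\OO_{A'}^\times$ rather than merely in $(A')^\times$). Everything else is a direct manipulation using commutativity of $A'$ and the identity $S=(I':I')$.
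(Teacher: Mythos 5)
Your proof is correct and follows essentially the same route as the paper: you identify the stabilizer with $S^\times\Delta'(\OO_{E'_p}^\times)$ and establish transitivity by writing the connecting isomorphism as multiplication by an element of $(A')^\times$, then adjusting by a $\Delta'$-scalar and using that the multiplicator ring of any fractional $\OO_{A'}$-ideal is $\OO_{A'}$ to land in $\OO_{A'}^\times$. The only cosmetic difference is in the freeness step: the paper verifies the identity $\OO_{A'}^\times\cap S^\times\Delta'({E'_p}^\times)=S^\times\Delta'(\OO_{E'_p}^\times)$ by a direct valuation computation, whereas you deduce $\Delta'(\delta)\in\OO_{A'}^\times$ from $\Delta'(\delta)^{-1}\gamma\in S^\times\subseteq\OO_{A'}^\times$ and then intersect with the diagonal; these are equivalent and equally rigorous.
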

\begin{proof} 
    If $\gamma \in \OO^{\times}_{A'}$ then $\gamma I \OO_{A'}=J$.
    Now $\gamma I = I $ if and only if $\gamma\in S^{\times}$, and $\gamma I$ is $\Delta'$-isomorphic to $I$ if and only if $\gamma \in \Delta'({E'_p}^\times)$. 
    We claim that 
    \[\OO_{A'}^{\times} \cap S^{\times} \Delta'({E'_p}^\times) = S^{\times} \Delta'(\OO_{E'_p}^\times).\]
    Indeed, for any $y \in \OO_{A'}^{\times} \cap S^\times \Delta'({E'_p}^\times)$ there exist $s \in S^\times$ and $x \in \Delta'({E'_p}^\times)$ such that $y=sx \in \OO_{A'}^{\times}$.
    
    Recall that $A' = \prod_{\nu} A_\nu$ where $\nu$ runs over the places of $E'_p$ and that $A_\nu$ is a direct product of $g_\nu$ copies of $LE_\nu$. We denote here by $\val_\nu$ both the valuation on $E_\nu$ and its (unramified) extension to $LE_\nu$.
    Then $0=\val_{\nu}(s_{\nu,i}x_{\nu,i})=\val_{\nu}(x_{\nu,i})$ for each $\nu$ and each $0\leq i \leq g_\nu$, and hence $x_{\nu,i} \in \OO_{E'_\nu}^\times$. 
    Note that $x_{\nu,i}=x_{\nu,j}$ for all $i,j$ by assumption, so $x \in \Delta'(\OO_{E'_p}^\times)$.   
    This shows that the group action is well-defined and free. 

    Pick $[I_0]_{\Delta'}$ in $e_I^{-1}([J]_{\Delta'})$.
    By assumption there are $\delta \in A'^{\times}$ and $\beta\in\Delta({E'_p}^{\times})$ such that
    $I_0=\delta I$ and $I_0\OO_{A'} = \beta J$.
    Put $\gamma = \delta/\beta$.
    By construction, we have $\gamma I = \beta^{-1} I_0$ which shows that $[\gamma I]_{\Delta'} = [I_0]_{\Delta'}$.
    So, to conclude that the group action is also transitive, it suffices to show that $\gamma \in \OO_{A'}^\times$, as we now do.
    Recall that by hypothesis we have $J=I\OO_{A'}$. 
    Hence
    \[ \gamma I \OO_{A'} = \frac{\delta}{\beta}I\OO_{A'} = \frac{1}{\beta}I_0\OO_{A'} = J = I\OO_{A'}, \]
    which implies that $\gamma \in \OO_{A'}^{\times}$.
\end{proof}

In Algorithm~\ref{alg:Deltaclasses_OO} below, we will use that, given an overorder $S$ of $W'_R$, the quotient $\OO_{A'}^\times / S^{\times}\Delta(\OO_{E'_p}^\times)$ is the cokernel of the inclusion
\[ \frac{S^{\times}\Delta'(\OO_{E'_p}^\times)}{S^\times} \longrightarrow \frac{\OO_{A'}^\times}{S^\times},\]
as we now show.
\begin{lem}\label{lem:quot_stab_sigma}
    The subgroup $H=W_R^{\times}\Delta'(\OO_{E'_p}^\times)/W_R^\times$ is generated by the elements of $ \OO_{A'}^\times /W_R^\times $ fixed by the action induced by the automorphisms $\sigma \colon A_\nu\to A_\nu$ for $\nu$ ranging over the places of $E_p$ with $s(\nu)\in(0,1)$.
    
    Let $S$ be any overorder of $W_R$.
    The subgroup $S^{\times}\Delta'(\OO_{E'_p}^\times)/S^\times$ is generated by the image of $H$ via the natural projection $ \OO_{A'}^\times /W_R^\times \twoheadrightarrow \OO_{A'}^\times /S^\times $.
\end{lem}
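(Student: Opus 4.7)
The plan is to split the proof into the two assertions, the first being a Galois descent combined with a Hilbert~90 construction, the second a formal consequence. For the first assertion, the inclusion $H \subseteq (\OO_{A'}^\times/W_R^\times)^{\sigma}$ is immediate: from Equation~\eqref{eq:def_sigma}, the diagonal embedding $\Delta'$ sends $E'_p$ into the $\sigma$-fixed subalgebra of $A'$, so $\Delta'(\OO_{E'_p}^\times) \subseteq (\OO_{A'}^\times)^{\sigma}$, and its image in the quotient by $W_R^\times$ remains $\sigma$-fixed. For the reverse inclusion, I would take $\gamma \in \OO_{A'}^\times$ whose class is $\sigma$-fixed, write $\sigma(\gamma) = u\gamma$ with $u \in W_R^\times$, and note that $\sigma^a = \mathrm{id}$ forces the cocycle relation $u \cdot \sigma(u) \cdots \sigma^{a-1}(u) = 1$. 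Producing $v \in W_R^\times$ with $u = \sigma(v)/v$ would then suffice, since $\gamma/v$ is $\sigma$-fixed, and $\OO_{A'} = W \otimes_{\Z_p} \OO_{E'_p}$ being a $\langle\sigma\rangle$-Galois extension of $\OO_{E'_p}$ gives $(\OO_{A'}^\times)^{\sigma} = \Delta'(\OO_{E'_p}^\times)$ (argued as in Proposition~\ref{prop-ordinary-ideals}), so $\gamma \in W_R^\times \Delta'(\OO_{E'_p}^\times)$.

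The construction of $v$ is a Hilbert~90 step for the $\langle\sigma\rangle$-Galois extension $W_R = W \otimes_{\Z_p} R_p$ of $R_p$. I would use the classical resolvent formula
\[ v = \sum_{i=0}^{a-1}\Bigl(\prod_{j=0}^{i-1}\sigma^j(u)\Bigr)^{-1} \sigma^i(w) \]
for a generic $w \in W_R$. The cocycle relation on $u$ directly gives $\sigma(v) = uv$, and Artin's linear independence of characters, applied at the residue-field level of each maximal ideal of $W_R$, ensures that $v$ can be chosen to be nonzero modulo every maximal ideal of the semilocal ring $W_R$, hence $v \in W_R^\times$.

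The second assertion is an elementary set-theoretic manipulation: since $W_R^\times \subseteq S^\times$, the projection $\pi_S \colon \OO_{A'}^\times/W_R^\times \twoheadrightarrow \OO_{A'}^\times/S^\times$ is well-defined, and
\[ \pi_S(H) = W_R^\times \Delta'(\OO_{E'_p}^\times) S^\times/S^\times = S^\times \Delta'(\OO_{E'_p}^\times)/S^\times \]
once $W_R^\times$ is absorbed into $S^\times$. The main obstacle is coordinating the choice of $w$ across all the maximal ideals of the possibly non-maximal order $W_R$, ensuring $v$ is simultaneously a unit everywhere; a purely cohomological alternative via the exact sequence $1 \to W_R^\times \to \OO_{A'}^\times \to \OO_{A'}^\times/W_R^\times \to 1$ is less direct, because $H^1(\langle\sigma\rangle, W_R^\times)$ need not vanish for a non-maximal order, so the connecting map cannot be dismissed cohomologically.
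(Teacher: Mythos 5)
Your reading of the first assertion — that $H$ equals the full $\sigma$-fixed subgroup of $\OO_{A'}^\times/W_R^\times$, not merely the image therein of the $\sigma$-fixed elements of $\OO_{A'}^\times$ — is the one the downstream algorithm actually needs (one computes fixed points in the finite quotient), and your cocycle route to the reverse inclusion is the right one. The paper's own proof is much shorter: it computes $(\OO_{A'}^\times)^\sigma=\Delta'(\OO_{E'_p}^\times)$ from the explicit description of $\sigma$ in Equation~\eqref{eq:def_sigma} and then concludes in one line, leaving implicit the descent of a $\sigma$-fixed class in the quotient to a $\sigma$-fixed element of $\OO_{A'}^\times$. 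Your cocycle relation $u\cdot\sigma(u)\cdots\sigma^{a-1}(u)=1$, your resolvent identity $\sigma(v)=uv$, and your treatment of the second assertion all check out and agree with the paper.

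Where your argument stalls is making the resolvent $v$ a unit, and your closing worry — that $H^1(\langle\sigma\rangle, W_R^\times)$ might not vanish for a non-maximal order — is in fact unfounded; removing that worry is the cleanest way to close the gap. The extension $W'_R=W\otimes_{\Z_p}R'_p$ over $R'_p$ is a $\langle\sigma\rangle$-Galois extension of commutative rings, being the base change of the Galois extension $W/\Z_p$; this is the same fact invoked (via Greither) in the proof of Proposition~\ref{prop-ordinary-ideals}. Both $W'_R$ and $R'_p$ are finite $\Z_p$-algebras, hence semilocal, hence have trivial Picard group. The low-degree exact sequence from faithfully flat descent of line bundles along a Galois cover,
\[
0 \longrightarrow H^1\bigl(\langle\sigma\rangle, (W'_R)^\times\bigr) \longrightarrow \mathrm{Pic}(R'_p) \longrightarrow \mathrm{Pic}(W'_R)^{\langle\sigma\rangle} \longrightarrow \cdots,
\]
then gives $H^1(\langle\sigma\rangle, (W'_R)^\times)=0$ regardless of whether $R'_p$ is maximal. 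So the cocycle $u$ is automatically a coboundary, supplying $v\in (W'_R)^\times$ with $u=\sigma(v)/v$ directly, without having to coordinate the choice of $w$ across the maximal ideals of $W'_R$. This vanishing is precisely the nontrivial ingredient that makes the strong form of the lemma hold, and it is the point the paper's one-line conclusion does not surface.
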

\begin{proof}
    Recall from Equation~\eqref{eq:def_sigma} that $\sigma$ acts on $A_\nu = \oplus LE_{\nu}$ as a cyclic permutation on the $g_\nu$ copies of $LE_\nu$ followed by~$\tau_\nu$ on the last component.
   Therefore, the elements of $\OO_{A_\nu}^\times$ fixed by $\sigma$ are those whose components are fixed by $\tau_\nu$ and are all the same, hence the elements are in the image of $\Delta'$. 
   These form precisely $\Delta'(\OO_{E'_\nu}^\times)$.
    The first statement then follows since $\sigma$ acts also on $W_{R,\nu}$.
    The second statement is a consequence of the fact that we have $S^\times W_R^{\times}\Delta'(\OO_{E'_p}^\times) = S^{\times}\Delta'(\OO_{E'_p}^\times)$.
\end{proof}

\begin{remark}\label{rmk:units_global}
    In Step~\ref{alg:unitsquotient} of Algorithm~\ref{alg:Deltaclasses_OO}, we need to compute a set of representatives in $\OO_{\widetilde A}$ of $\OO_{A'}^\times / S^{\times}\Delta(\OO_{E'_p}^\times)$.
    Consider the finite rings
    \[\mathfrak{o}_A=\frac{\OO_{\widetilde A}}{\widetilde \frf + \prod_{i=1}^r\widetilde \frp_i^{k_i}\OO_{\widetilde A}} 
    \qquad\text{and}\qquad
    \mathfrak{s} = \dfrac{\widetilde S}{\widetilde \frf+\prod_{i=1}^r\widetilde \frp_i^{k_i}},\]
    where:
    $\widetilde{\frf}$ is the conductor of $\widetilde S$ in $\OO_{\widetilde A}$; 
    $\widetilde \frp_1,\ldots,\widetilde \frp_r$ are the maximal ideals of $\widetilde S$ corresponding to the finitely many maximal ideals of $S$;
    for $i=1\ldots,r$, set $k_i$ to be the smallest integer greater than or equal to $\val_p(|\widetilde S/\widetilde \frf|)/\val_p(|\widetilde S/\widetilde \frp_i|)$.
    Then we have a canonical isomorphism:
    \[ \frac{\OO_{A'}^\times}{S^\times} \simeq 
    \frac{\mathfrak{o}_A^\times}
    {
    \mathfrak{s}^\times}. \]
    Algorithms to concretely compute this quotient can be found in \cite{HessPauliPohst03} and \cite{klupau05}.
    Then, we reach our goal by using Lemma~\ref{lem:quot_stab_sigma} together with the fact that the action of $\sigma$ can be realized on the finite ring $\mathfrak{o}_A$, as we now explain.
    Let $\mathfrak{o}_L = \OO_{\widetilde L}/(p\OO_{\widetilde L})^m$, where $m$ is defined by $\vert\mathfrak{o}_A\vert=\vert \OO_{\widetilde L}/(p\OO_{\widetilde L})\vert^m$.
    Then, the finite ring~$\mathfrak{o}_A$ is an $\mathfrak{o}_L$-algebra.
    If $b_1,\ldots,b_{2g}$ is the image of a $\Z$-basis of $\OO_E$ in $\mathfrak{o}_A$ then we can give a $\sigma$-equivariant presentation of $\mathfrak{o}_A$ as an $\mathfrak{o}_L$-algebra by 
    \begin{align*}
        \mathfrak{o}_L\times \ldots \times \mathfrak{o}_L &\longrightarrow \mathfrak{o}_A \\
        (c_1,\ldots,c_{2g}) &\longmapsto \sum_{i=1}^{2g} c_ib_i.
    \end{align*}
    Hence, in order to compute the action of $\widetilde \sigma$ on $\mathfrak{o}_A$, it suffices to compute an approximation of the ($p$-adic) Frobenius automorphism of $L$ on $\mathfrak{o}_L$.
    This can be done as follows.
    Let $u$ be a lift in $\mathfrak{o}_L$ of a generator of $\left( \OO_{\widetilde L}/p \OO_{\widetilde L}\right)^\times$.
    Compute $u^{q^i}$ for $i>0$ until $u^{q^i} = u^{q^{i+1}}$ and let $z=u^{q^i}$.
    Then $z$ is the image of an inertial element of $\OO_L\simeq \Z_p[z]$ in $\mathfrak{o}_L$.
    Hence, by computing an explicit isomorphism $\Z_p[z]/(p\Z[z])^m \simeq \mathfrak{o}_L$, we can compute the action of $\sigma$ on $\mathfrak{o}_L$ by pushing forward the action of $\sigma $ on $\Z_p[z]$ which is given by $z\mapsto z^p$.
    
    Note that this construction does not require $\widetilde L$ to be normal; cf~Remark~\ref{rmk:tilde}.\ref{rmk:tilde:sigma}.
    We stress that the output of Step \eqref{alg:unitsquotient} in Algorithm~\ref{alg:Deltaclasses_OO} is independent of choice of the approximation of $\widetilde \sigma$ we computed, which is certainly not unique.  
\end{remark}

For the remainder of the article, for each isogeny class, we fix $F'$ of $W$-type (cf.~Definition~\ref{def:F-Waterhouse}).
Thanks to Proposition~\ref{prop:FVstable_OO}, we understand the $W'_R\{F',V'\}$-ideals with maximal endomorphism ring.
We now show in Proposition~\ref{prop:ext_of_FV_stable} that $e_I$ sends $W'_R\{F',V'\}$-ideals to $W'_R\{F',V'\}$-ideals.
This implies that, in order to compute all $W'_R\{F',V'\}$-ideals, it suffices to consider the fibers of $W'_R\{F',V'\}$-ideals with multiplicator ring $\OO_A'$.
\begin{prop}\label{prop:ext_of_FV_stable}
    Let $I$ be a fractional $W'_R$-ideal.
    If $I$ is a $W'_R\{F',V'\}$-ideal, then $I\OO_{A'}$ is also a $W'_R\{F',V'\}$-ideal. 
\end{prop}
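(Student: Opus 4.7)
The plan is to exploit the fact that $F'$, being of $W$-type, decomposes as $F' = \alpha \circ \sigma$ for some $\alpha \in A'$, where $\sigma \colon A' \to A'$ is a genuine \emph{ring} automorphism of $A'$ (induced by the Frobenius of $L$ over $\Q_p$ on the factor $L$ of $A = L\otimes_{\Q_p} E_p$). The argument then becomes a direct computation.

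First I would verify that $\sigma$ preserves the maximal order $\OO_{A'}$. Using the explicit description in Remark~\ref{rmk:tilde}.\ref{rmk:tilde:primes_above_nu}, on each component $A_\nu = \prod_{i=1}^{g_\nu} LE_\nu$ the automorphism $\sigma$ cyclically permutes the factors and acts as the Frobenius $\tau_\nu$ of $LE_\nu/E_\nu$ on the last factor; both operations stabilize $\OO_{A_\nu} = \prod_i \OO_{LE_\nu}$. Hence $\sigma(\OO_{A'}) = \OO_{A'}$.

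Next, I would carry out the direct calculation. For any $i \in I$ and $x \in \OO_{A'}$, using that $\sigma$ is multiplicative,
\[ F'(ix) \,=\, \alpha \cdot \sigma(ix) \,=\, \alpha \cdot \sigma(i)\sigma(x) \,=\, F'(i)\cdot \sigma(x). \]
By hypothesis $F'(i) \in I$, and by the previous paragraph $\sigma(x) \in \OO_{A'}$. Since $I\OO_{A'}$ is the additive subgroup generated by products $ix$ with $i \in I$ and $x \in \OO_{A'}$, and since $F'$ is additive, this gives $F'(I\OO_{A'}) \subseteq I\OO_{A'}$. The argument for $V' = p(F')^{-1}$ is identical: from $F'\lambda = \lambda^\sigma F'$ one gets $V'\lambda = \lambda^{\sigma^{-1}} V'$, so $V' = \beta \circ \sigma^{-1}$ for some $\beta \in A'$, and the same calculation yields $V'(ix) = V'(i)\sigma^{-1}(x) \in I\OO_{A'}$.

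There is no serious obstacle here—the proof is essentially a one-line semilinearity computation once one notices that $\sigma$ respects the maximal order. The only subtle point worth flagging is that $F'$ is not $W'_R$-linear (only $\sigma$-semilinear), so the conclusion does \emph{not} follow from a generic extension-of-scalars argument; it genuinely uses that $\sigma$ is a ring automorphism sending $\OO_{A'}$ to itself.
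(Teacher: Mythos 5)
Your proof is correct and follows essentially the same route as the paper's: both decompose $F'$ as $\alpha\circ\sigma$, use that $\sigma$ is a multiplicative (ring) automorphism preserving $\OO_{A'}$, compute $F'(ix)=F'(i)\sigma(x)$, and conclude by additivity, with the symmetric argument via $\sigma^{-1}$ for $V'$. The only difference is that you spell out explicitly why $\sigma(\OO_{A'})=\OO_{A'}$ and that $V'=\beta\circ\sigma^{-1}$, whereas the paper takes these for granted; your closing remark that this is genuinely a semilinearity argument and not an extension-of-scalars one is a fair and accurate observation, though not a departure in method.
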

\begin{proof} 
    Any element of $I\OO_{A'}$ can be written as a finite sum $z=\sum_i x_i a_i$ with $x_i \in I$ and $a_i \in \OO_{A'}$.
    Recall that $F'$ is of the form $z \mapsto \alpha z^\sigma$ on each component $A_\nu$.
    The action of $\sigma$ is multiplicative, so we can write
    \[ F'(z) = \sum_i F'(x_ia_i) = \sum_i F'(x_i)\sigma(a_i). \]
    Since $F'(x_i) \in I$ by assumption and $\sigma(a_i) \in \OO_{A'} ( \simeq W \otimes \OO_{E'_p})$ we see that $F'(z) \in I\OO_{A'}$. 
    Also, 
    \[ V'(z) = \sum_i V'(x_ia_i) = \sum_i V'(x_i)\sigma^{-1}(a_i). \]
    Now, $V'(x_i) \in I$ by assumption and $\sigma^{-1}(a_i) \in \OO_{A'}$.
    So $V'(z) \in I\OO_{A'}$.
    This shows that $I\OO_{A'}$ is a $W'_R\{F',V'\}$-ideal.
\end{proof}

We are now ready to combine all the results presented in this section into an algorithm to compute $\Delta'$-isomorphism classes.

\begin{alg}
\label{alg:Deltaclasses_OO}\hfill\\
    \textbf{Input}: A set of fractional $\widetilde W_R$-ideals $\widetilde I_1,\ldots,\widetilde I_n$ representing the $W'_R$-isomorphism classes of all fractional $W'_R$-ideals.\\
    \textbf{Output}: A set of fractional $\widetilde W_R$-ideals representing the $\Delta'$-isomorphism classes of all fractional $W'_R$-ideals whose extension to $\OO_{A'}$ is in $\Upsilon$ (see Definition~\ref{def:Upsilon}).\\
    \begin{enumerate}[label=({\arabic*}), ref={\arabic*},wide=0pt, leftmargin=\parindent]\vspace{-1em}
        \item 
        For each place $\nu$ of $E_p$ of slope in $(0,1)$, compute the maximal ideals $\widetilde{\frP}_{\nu,1},\ldots,\widetilde{\frP}_{\nu,g_\nu}$ of $\OO_{\widetilde A}$ extending~$\nu$; see Remark~\ref{rmk:tilde}.\ref{rmk:tilde:primes_above_nu}.
        For each index $k$, compute an element $t_{\nu,k}$ which is a uniformizer at $\mathfrak{P}_{\nu,k}$ and a unit at every other maximal ideal.
        \item Use Proposition~\ref{prop:FVstable_OO} to compute, up to $\Delta'$-isomorphism, all $W'\{F',V'\}$-ideals $J_1,\ldots,J_m$ having maximal multiplicator ring $\OO_{A'}$.
        Each $J_j$ is stored as a tuple of vectors of the form 
        $({\varepsilon_{j,\nu,1}},\ldots, {\varepsilon_{j,\nu,g_\nu}})_\nu$ (as in $\Upsilon$). 
        \item \label{alg:Deltaclasses_OO:foreachIi} For each $\widetilde I_i$, do the following (to ease the notation we partially suppress the dependency on $i$):
            \begin{enumerate}[label=({\ref{alg:Deltaclasses_OO:foreachIi}.\alph*}), ref={\ref{alg:Deltaclasses_OO:foreachIi}.\alph*}]
                \item Compute the extension $\widetilde J=\widetilde I_i\OO_{\widetilde A}$.
                \item Compute the factorization 
                \begin{equation} \label{eq:jdagger} \widetilde J = \widetilde{J}^\dagger\times\prod_{\nu}\prod_{k=1}^{g_\nu}\widetilde\frP^{\eta_{\nu,k}}_{\nu,k},  \end{equation}
                where $\nu$ runs over the places of $E_p$ of slope in $(0,1)$, and $\widetilde{J}^\dagger$ is the product of all the other maximal ideals.
                \item For each $j=1,\ldots,m$, let $\delta_j=\prod_{\nu}\prod_{k=1}^{g_\nu}t_{\nu,k}^{\eta_{\nu,k}-\varepsilon_{j,\nu,k}}$.
                \item  \label{alg:unitsquotient} Use Remark~\ref{rmk:units_global} to compute $\gamma_1,\ldots,\gamma_r$ in $\OO_{\widetilde A}$ representing the elements of $\OO_{A'}^\times / S^{\times}\Delta(\OO_{E'_p}^\times)$, where $S$ is the order in $A'$ generated by the multiplicator ring of $\widetilde I_i$.
                \item 
                Define the set of fractional $\widetilde W_R$-ideals $\mathcal{I}_i=\{ \delta_j^{-1}\gamma_l \widetilde I_i\ : 1\leq j \leq m , 1\leq l \leq r \}$.
            \end{enumerate}
        \item Return $\bigcup_{i=1}^n\mathcal{I}_i$.
    \end{enumerate}
\end{alg}
\begin{thm}
    Algorithm~\ref{alg:Deltaclasses_OO} is correct.
\end{thm}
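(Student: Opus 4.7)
The plan is to show that the returned set $\bigcup_i \mathcal I_i$ is a system of $\Delta'$-isomorphism representatives of exactly those fractional $W'_R$-ideals whose extension to $\mathcal O_{A'}$ lies in $\Upsilon$. I would organise the argument by a two-level stratification: any such $\Delta'$-iso class sits inside a unique $W'_R$-iso class, and within that it is determined by its fiber under the extension map $e_{\widetilde I_i}:[I']_{\Delta'}\mapsto [I'\mathcal O_{A'}]_{\Delta'}$. The outer loop over $\widetilde I_i$ covers every $W'_R$-iso class by the assumption on the input, while Proposition~\ref{prop:FVstable_OO} together with Definition~\ref{def:Upsilon} identifies the target set $\Upsilon$ with the $\Delta'$-iso classes of fractional $\mathcal O_{A'}$-ideals that are $W'_R\{F',V'\}$-stable; this set is closed under extension by Proposition~\ref{prop:ext_of_FV_stable}.

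Next I would verify that $\delta_j^{-1}\widetilde I_i$ is a valid representative of the fiber $e_{\widetilde I_i}^{-1}([J_j]_{\Delta'})$. By Lemma~\ref{lem:nice_unif}, $t_{\nu,i}$ has valuation $1$ at $\widetilde\frP_{\nu,i}$ and valuation $0$ at every other prime above $\nu$; choosing the CRT representative in $\mathcal O_{\widetilde A}$ to be a unit also at primes above places outside $(0,1)$ makes $t_{\nu,i}$ a unit at every prime of $\mathcal O_{\widetilde A}$ other than $\widetilde\frP_{\nu,i}$. Then $\delta_j=\prod t_{\nu,i}^{\eta_{\nu,i}-\varepsilon_{j,\nu,i}}$ is a nonzero divisor of $\widetilde A$, and multiplying the extension $\widetilde I_i\mathcal O_{\widetilde A}=\widetilde J^\dagger\prod\widetilde\frP_{\nu,i}^{\eta_{\nu,i}}$ by $\delta_j^{-1}$ shifts the exponent at each $\widetilde\frP_{\nu,i}$ from $\eta_{\nu,i}$ to $\varepsilon_{j,\nu,i}$ and preserves $\widetilde J^\dagger$. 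So $(\delta_j^{-1}\widetilde I_i)\mathcal O_{\widetilde A}$ matches $J_j$ under the identification in Remark~\ref{rmk:tilde}.\ref{rmk:tilde:prime}.

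Having one representative in each fiber, I would invoke Proposition~\ref{prop-fibers}: the fiber $e_{\widetilde I_i}^{-1}([J_j]_{\Delta'})$ is a torsor under $\mathcal O_{A'}^\times/S^\times\Delta'(\mathcal O_{E'_p}^\times)$, where $S=(\widetilde I_i:\widetilde I_i)$. Step~\ref{alg:unitsquotient} provides, via Remark~\ref{rmk:units_global} (built on Lemmas~\ref{lem:quot_stab_sigma} and \ref{lem:global_unit_quot}), a system of global representatives $\gamma_1,\ldots,\gamma_r$ for this quotient. The freeness and transitivity of the action then guarantee that $\{\delta_j^{-1}\gamma_k\widetilde I_i\}_k$ enumerates the fiber exactly once, so that ranging $j$ over the $J_j\in \Upsilon$ and $i$ over the input yields the entire target set with no repetition.

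The main obstacle is the local control of $\delta_j$ in the second paragraph: we need $\delta_j$ to be a nonzero divisor acting as the prescribed uniformiser-shift at the selected primes above places of slope in $(0,1)$ and as a unit everywhere else. This reduces to choosing the right CRT representative of $t_{\nu,i}$ in $\mathcal O_{\widetilde A}$, which mildly refines the construction in Lemma~\ref{lem:nice_unif}; everything else is a direct citation of Propositions~\ref{prop-fibers}, \ref{prop:FVstable_OO} and \ref{prop:ext_of_FV_stable}.
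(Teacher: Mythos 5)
Your proof follows the same structure as the paper's: stratify first by $W'_R$-isomorphism class (covered by the input), then parametrize the $\Delta'$-isomorphism classes within each stratum by the fibers of the extension map, which Proposition~\ref{prop-fibers} identifies as torsors under $\OO_{A'}^\times/S^\times\Delta'(\OO_{E'_p}^\times)$; the appeal to Propositions~\ref{prop:FVstable_OO}, \ref{prop-fibers} and \ref{prop:ext_of_FV_stable} matches the published argument. (The paper also separately verifies pairwise non-isomorphism of the output, which you get for free from the stratification plus the freeness of the torsor action.)

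You have correctly flagged a genuine subtlety that the paper's proof glosses over with ``by the way each $t_{\nu,i}$ is constructed'': Lemma~\ref{lem:nice_unif} only prescribes the valuations of $t_{\nu,i}$ at the primes $\widetilde\frP_{\nu,j}$ lying above the \emph{same} place~$\nu$. Since the simple factors of $\widetilde A$ need not respect the decomposition $A=\prod_\nu A_\nu$, the element $t_{\nu,i}$ could \emph{a priori} be a zero divisor (spoiling the invertibility of $\delta_j$), or carry nontrivial valuation at a local-local prime $\widetilde\frP_{\mu,k}$ above some $\mu\neq\nu$ of slope in $(0,1)$ (spoiling the valuation computation for $(\delta_j^{-1}\widetilde I_i)\OO_{\widetilde A}$). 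Two comments on your fix. First, requiring $t_{\nu,i}$ to be a unit at primes above places of slope $0$ or $1$ is unnecessary: once you pass to the represented $W'_R$-ideal (via Remark~\ref{rmk:tilde}.\ref{rmk:tilde:prime}) only the local-local primes matter, and the later algorithms rescale by $\widetilde\Delta(E^\times)$ anyway. Second, and more importantly, the conditions you actually list (valuation one at $\widetilde\frP_{\nu,i}$, unit at the other $\widetilde\frP_{\nu,j}$, unit at primes above places of slope $0$ or $1$) do \emph{not} yet imply your stated conclusion that $t_{\nu,i}$ is a unit at every prime other than $\widetilde\frP_{\nu,i}$: you have omitted the primes $\widetilde\frP_{\mu,k}$ above other places $\mu\neq\nu$ of slope in $(0,1)$, which is exactly the case that matters. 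The correct minimal strengthening is: $t_{\nu,i}$ should be a nonzero divisor and a unit at every local-local prime of $\OO_{\widetilde A}$ other than $\widetilde\frP_{\nu,i}$ (not just those above $\nu$). Apart from this slip in the bookkeeping, your argument is sound and essentially identical to the paper's.
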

\begin{proof}
    Let $L,L'\in \bigcup_{i=1}^n\mathcal{I}_i$ with $L\neq L'$.
    If $L\in \mathcal{I}_i$ and $L'\in\mathcal{I}_{i'}$ for indices $1\leq i < i' \leq n$ then $L$ and $L'$  are not $\Delta'$-isomorphic since they are not $W'_R$-isomorphic.
    If $L$ and $L'$ belong to the same $\mathcal{I}_i$ then they are not $\Delta'$-isomorphic by Proposition~\ref{prop-fibers}.
    Hence, all fractional $W'_R$-ideals in $\bigcup_{i=1}^n\mathcal{I}_i$ are pairwise non-$\Delta'$-isomorphic.
    
    To conclude we need to show that $\bigcup_{i=1}^n\mathcal{I}_i$ contains representatives of all $\Delta'$-isomorphism class of $W'_R$-ideals whose extension to $\OO_A'$ is a $W'_R\{F',V'\}$-ideal.
   Fix an $i$, and write $\widetilde{J}=\widetilde{J}^{\dagger} \times \widetilde{J}^{\dagger\dagger}$ as in Equation~\eqref{eq:jdagger}. Note that $\widetilde{J}^{\dagger\dagger}$ is isomorphic to $\widetilde{I}_i\OO_{\widetilde A}$ locally at every maximal ideal $\frP_{\nu,k}$.
    Due to the way the $t_{\nu,k}$ are constructed, we get that 
    $J_j$ and $\delta_j^{-1}\widetilde{J}^{\dagger\dagger}_i$ are equal locally at every maximal ideal $\frP_{\nu,k}$.
    Hence, the result follows from Propositions~\ref{prop:FVstable_OO}, \ref{prop-fibers} and \ref{prop:ext_of_FV_stable}.
\end{proof}

\subsection{Step 4: Stability under the action of \texorpdfstring{$F$ and $V$}{FV}}
\label{sec:Step3}
Algorithm~\ref{alg:Deltaclasses_OO} returns a list of fractional $\widetilde W_R$-ideals representing the $\Delta'$-isomorphism classes of fractional $W'_R$-ideals.
Consider the class $[I]_{\Delta'}$ represented by $\widetilde I$, which we can assume to be contained in $\OO_{\widetilde A}$.
This step determines whether $[I]_{\Delta'}$ consists of $W'_R\{F',V'\}$-ideals.
The main difficulty is that, in general, $F'$ and $V'$ are intrinsically $p$-adic, that is, cannot be realized on the $\Q$-algebra $\widetilde A$.
But Lemma~\ref{lem:step3_eq_cond} shows that we can instead check if $[I]_{\Delta'}$ consists of $W'_R\{F',V'\}$-ideals on a finite quotient $Q_{m_0}$, where $m_0$ is the precision parameter minimally chosen to guarantee the correctness of the output, as explained in Remark~\ref{rmk:alg3_sigma_fin_quot}.
Using Lemma~\ref{lem:Frob_fin_quot}, we construct presentations $F_{m_0}$ of $F'$ on $Q_{m_0}$.
In fact, we do it at a precision $m_0+1$, since $V_{m_0}$ cannot be immediately recovered from $F_{m_0}$.
This whole procedure is Algorithm~\ref{alg:FVclasses}, which can be considered as the core of the computational contribution of this paper.
For further analysis, it is desirable to record $F_{m_0}$ and~$V_{m_0}$ together with the output of the algorithm, see for example Remark~\ref{rmk:a-numbers}, which shows how to recover the $a$-numbers of the abelian varieties.

The algorithm determines whether a given $W'_R$-ideal is a $W'_R\{F',V'\}$-ideal by pushing it into a finite quotient $Q_{m_0}$ which depends on a precision parameter $m_0$. 
This parameter is chosen minimally so that we can realize the actions $F_{m_0}$ (resp.~$V_{m_0}$) of $F'$ (resp.~$V'$) on $Q_{m_0}$.
\begin{lem}\label{lem:step3_eq_cond}
    Fix a $W'_R\{F',V'\}$-ideal $J$ and a fractional $W'_R$-ideal $I$ satisfying $I \subseteq J$.
    Let $\widetilde J$, $\widetilde I$, $\widetilde{V'I}$ and $\widetilde{F'I}$ be defined as in Remark~\ref{rmk:tilde}.\ref{rmk:tilde:prime}.
    Let $N$ be the exponent of the finite quotient $\widetilde J/\widetilde I$.
    Let~$m$ be an integer such that  $m \geq \val_p(N)$.
    Denote by~$\frp_1,\ldots,\frp_n$ the maximal ideals of $W_R$ which lie below the maximal ideals of $\OO_A$ above the places of $E$ of slope in $(0,1)$.
    For each $i=1,\ldots,n$, let $m_i$ be a positive integer such that $\vert J/p^m J\vert \leq \vert W_R/\frp_i \vert^{m_i}$.
    Then the following statements are equivalent:
    \begin{enumerate}[(i)]
        \item \label{lem:step3_eq_cond:class} $[I]_{\Delta'}$ consists of $W'_R\{F',V'\}$-ideals;
        \item \label{lem:step3_eq_cond:prime} $I = I + F'I + V'I$;
        \item \label{lem:step3_eq_cond:tilde} $\widetilde I = \widetilde I + \widetilde{F'I} + \widetilde{V'I}$;
        \item \label{lem:step3_eq_cond:finite} The images of $\widetilde I$ and $\widetilde I + \widetilde{F'I}+\widetilde{V'I}$ in the finite quotient $\widetilde J/p^m\widetilde J$ are equal;
        \item \label{lem:step3_eq_cond:01} The images of $\widetilde I$ and $\widetilde I + \widetilde{F'I}+\widetilde{V'I}$ in the finite quotient $\widetilde J/\left(p^m\widetilde J+\prod_{i=1}^n\frp_i^{m_i}\widetilde J\right)$ are equal.
    \end{enumerate}
\end{lem}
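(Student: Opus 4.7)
The plan is to prove the equivalences in the chain (i) $\Leftrightarrow$ (ii) $\Leftrightarrow$ (iii) $\Leftrightarrow$ (iv) $\Leftrightarrow$ (v). The first equivalence (i) $\Leftrightarrow$ (ii) is immediate from Corollary~\ref{cor:FV_on_classes}, which says that stability under $F'$ and $V'$ is $\Delta'$-isomorphism invariant; and the equality $I = I + F'I + V'I$ is just a restatement of the two containments $F'I \subseteq I$ and $V'I \subseteq I$. For (ii) $\Leftrightarrow$ (iii), one observes that by Remark~\ref{rmk:tilde}.\ref{rmk:tilde:ideals} and \ref{rmk:tilde:prime}, the assignment $L \mapsto \widetilde L$ from $W'_R$-ideals in $A'$ to the corresponding subset of $\widetilde W_R$-ideals in $\widetilde A$ is a bijection; moreover, since the $\ell$-adic completions of $\widetilde I$, $\widetilde{F'I}$, $\widetilde{V'I}$ are all fixed to $\OO_{\widetilde A} \otimes \Z_\ell$ for $\ell \neq p$ by construction, while $p$-adic completion commutes with finite sums, the tilde operation carries the sum identity in (ii) to the one in (iii).

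For (iii) $\Leftrightarrow$ (iv), the implication $\Rightarrow$ is clear. For the converse, note that the $\ell$-adic completion of $\widetilde J/\widetilde I$ vanishes for every $\ell \neq p$, so this finite $\Z$-module is $p$-primary and its exponent $N$ is a power of $p$. The hypothesis $m \geq \val_p(N)$ then gives $N \mid p^m$ and hence $p^m \widetilde J \subseteq N \widetilde J \subseteq \widetilde I$. Condition (iv) is equivalent to $\widetilde{F'I} + \widetilde{V'I} \subseteq \widetilde I + p^m \widetilde J$, and combining this with $p^m \widetilde J \subseteq \widetilde I$ yields $\widetilde{F'I} + \widetilde{V'I} \subseteq \widetilde I$, which is (iii).

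The key technical step is (iv) $\Leftrightarrow$ (v). One direction (iv) $\Rightarrow$ (v) is automatic since $p^m \widetilde J \subseteq p^m \widetilde J + \prod_i \frp_i^{m_i} \widetilde J$. For the converse, I would argue locally at each maximal ideal $\frq$ of $\widetilde W_R$. At any $\frq \notin \{\widetilde\frp_1,\ldots,\widetilde\frp_n\}$, the convention in Remark~\ref{rmk:tilde}.\ref{rmk:tilde:prime} forces $\widetilde I_\frq = \widetilde{F'I}_\frq = \widetilde{V'I}_\frq = \widetilde J_\frq = \OO_{\widetilde A, \frq}$, so both (iv) and (v) hold trivially at $\frq$. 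At $\frq = \widetilde\frp_j$, the product $\prod_i \frp_i^{m_i}$ localizes to $\frp_j^{m_j} \widetilde W_{R, \widetilde\frp_j}$, since the other factors become units. The hard part is then the inclusion $\frp_j^{m_j} \widetilde J_{\widetilde\frp_j} \subseteq p^m \widetilde J_{\widetilde\frp_j}$; this follows from the size hypothesis since the $\frp_j$-component of $J/p^m J$ is a $W_{R, \frp_j}$-module whose length is at most $\log_{|W_R/\frp_j|}|J/p^m J| \leq m_j$, and any $W_{R,\frp_j}$-module of length at most $m_j$ is annihilated by $\frp_j^{m_j}$. Under this inclusion, the extra summand $\prod_i \frp_i^{m_i} \widetilde J$ is absorbed by $p^m \widetilde J$ locally at $\widetilde\frp_j$, so (iv) and (v) agree at every prime of $\widetilde W_R$, and the implication follows.
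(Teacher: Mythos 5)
Your proof is correct and follows essentially the same route as the paper: (i) $\Leftrightarrow$ (ii) via Corollary~\ref{cor:FV_on_classes}, (ii) $\Leftrightarrow$ (iii) via the tilde construction from Remark~\ref{rmk:tilde}, (iii) $\Leftrightarrow$ (iv) from the containment $p^m\widetilde J \subseteq \widetilde I$, and (iv) $\Leftrightarrow$ (v) via the observation that the extra ideal $\prod_i\frp_i^{m_i}\widetilde J$ is absorbed by $p^m\widetilde J$ at each $\widetilde\frp_j$ and both conditions hold trivially at the other primes. The only difference is cosmetic: you make the local analysis in the (iv) $\Leftrightarrow$ (v) step (including the length bound that forces $\frp_j^{m_j}\widetilde J_{\widetilde\frp_j}\subseteq p^m\widetilde J_{\widetilde\frp_j}$) fully explicit, whereas the paper condenses this into the single identification $Q'\simeq\widetilde J/\bigl(p^m\widetilde J+\prod_i\frp_i^{m_i}\widetilde J\bigr)$ together with an appeal to Remark~\ref{rmk:tilde}.\ref{rmk:tilde:prime}.
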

\begin{proof}
    The equivalence of~\ref{lem:step3_eq_cond:class} and~\ref{lem:step3_eq_cond:prime} follows from Corollary~\ref{cor:FV_on_classes}.
    By Remark~\ref{rmk:tilde}.\ref{rmk:tilde:ideals}, we have a canonical isomorphism of finite quotients
    \[ \frac{\widetilde I + \widetilde{F'I} + \widetilde{V'I}}{\widetilde I}
    \simeq \frac{I + F'I + V'I}{I},\]
    readily giving the equivalence of~\ref{lem:step3_eq_cond:tilde} and \ref{lem:step3_eq_cond:prime}.
    The assumption on $m$ implies that we have inclusions
    \[ p^m  \widetilde J \subseteq \widetilde I \subseteq \widetilde I + \widetilde{F'I} + \widetilde{V'I} \subseteq \widetilde J. \]
    Hence, conditions \ref{lem:step3_eq_cond:tilde} and \ref{lem:step3_eq_cond:finite} are equivalent.
    Let $Q=\widetilde J/p^m \widetilde J$.
    The splitting of $W_R \simeq W_R^{\{0\}} \times W'_R \times W_R^{\{1\}}$ induces a decomposition $Q \simeq Q^{\{0\}} \oplus Q' \oplus Q^{\{1\}}$.
    Observe that $Q' \simeq \widetilde J/\left(p^m\widetilde J+\prod_{i=1}^n\frp_i^{m_i}\widetilde J\right)$.
    Hence \ref{lem:step3_eq_cond:finite} implies \ref{lem:step3_eq_cond:01}.
    The converse follows from Remark~\ref{rmk:tilde}.\ref{rmk:tilde:prime}, which states that the ${(0)}$-parts (resp.~${(1)}$-parts) of $\widetilde I$ and $\widetilde I +\widetilde{F'I} + \widetilde{V'I}$ coincide.
\end{proof}

\begin{lem}\label{lem:Frob_fin_quot} 
    Fix an integer $j \geq 1$ and a place $\nu$ of $E_p$. Let $t_{\nu}$ be a uniformizer of $E_\nu$.   
    Let $\gamma_0$ be an element of $\OO^\times_{LE_\nu}$ such that
    \[ 
        N_{LE_\nu/E_\nu}(\gamma_0) - \pi_\nu/t_{\nu}^{\val_{\nu}(\pi_\nu)} \in \frp_{E_{\nu}}^j. 
    \]
    Then there exists $\gamma_1 \in \frp_{LE_{\nu}}^j $ such that
    \[ N_{LE_\nu/E_\nu}(\gamma_0+\gamma_1) = \pi_\nu/t_{\nu}^{\val_{\nu}(\pi_\nu)} \]
\end{lem}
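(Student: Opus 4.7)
The plan is to change variables multiplicatively so that the statement becomes a lifting question for the norm restricted to higher principal units, and then to exploit the fact that $LE_\nu/E_\nu$ is unramified to deduce surjectivity of the norm on such units.

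Concretely, let $u := \pi_\nu/t_\nu^{\val_\nu(\pi_\nu)} \in \OO_{E_\nu}^\times$ and abbreviate $N := N_{LE_\nu/E_\nu}$. Writing $\gamma_0 + \gamma_1 = \gamma_0(1+\eta)$ with $\eta := \gamma_0^{-1}\gamma_1$ gives a bijection between $\gamma_1 \in \frp_{LE_\nu}^j$ and $\eta \in \frp_{LE_\nu}^j$, since $\gamma_0 \in \OO_{LE_\nu}^\times$. The target equation $N(\gamma_0+\gamma_1) = u$ becomes $N(1+\eta) = u/N(\gamma_0)$. The hypothesis $N(\gamma_0) - u \in \frp_{E_\nu}^j$, combined with $N(\gamma_0) \in \OO_{E_\nu}^\times$, yields $u/N(\gamma_0) \in 1 + \frp_{E_\nu}^j$. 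So the lemma reduces to the claim that the norm map $N : 1+\frp_{LE_\nu}^j \to 1+\frp_{E_\nu}^j$ is surjective.

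For this surjectivity, I would use that $LE_\nu/E_\nu$ is unramified, so $t_\nu$ is also a uniformizer of $LE_\nu$ and $\frp_{LE_\nu}^j = t_\nu^j \OO_{LE_\nu}$. For $x \in \OO_{LE_\nu}$, the expansion of the norm in terms of the Galois conjugates gives
\[
N(1+t_\nu^j x) \equiv 1 + t_\nu^j \, \mathrm{Tr}_{LE_\nu/E_\nu}(x) \pmod{t_\nu^{2j}}.
\]
Because $LE_\nu/E_\nu$ is unramified, the induced residue extension is separable and its trace is surjective; hence $\mathrm{Tr}_{LE_\nu/E_\nu} : \OO_{LE_\nu} \to \OO_{E_\nu}$ is surjective modulo $\frp_{E_\nu}$. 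Given $\beta \in 1+\frp_{E_\nu}^j$, this lets me approximate $\beta$ by $N(1 + t_\nu^j x_0)$ to precision $\frp_{E_\nu}^{2j}$, and iterating the procedure (multiplying successive approximations $1 + t_\nu^{j} x_0, 1 + t_\nu^{2j} x_1, \ldots$ together) produces, by completeness of $\OO_{LE_\nu}$, an element $\eta \in \frp_{LE_\nu}^j$ with $N(1+\eta) = \beta$ exactly.

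The only real work is the successive approximation, but it is routine because each step improves precision by a uniform amount and the extension is unramified; alternatively, the surjectivity of $N$ on the higher unit groups in an unramified extension can be invoked directly from standard references on local fields.
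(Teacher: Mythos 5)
Your proof is correct, and its first paragraph—rewriting $\gamma_0+\gamma_1=\gamma_0(1+\eta)$ and reducing the whole lemma to the surjectivity of $N:1+\frp_{LE_\nu}^j\to 1+\frp_{E_\nu}^j$—is exactly the heart of the paper's argument. The paper invokes this surjectivity as a black box, citing Serre's \emph{Local Fields}, Chapter V, Proposition~3 (which states $NU_L^n=U_K^n$ for unramified $L/K$), whereas you prove it directly via the congruence $N(1+t_\nu^j x)\equiv 1+t_\nu^j\mathrm{Tr}_{LE_\nu/E_\nu}(x)\pmod{\frp_{E_\nu}^{2j}}$ and surjectivity of the trace (both valid because $LE_\nu/E_\nu$ is unramified), followed by a standard Newton-type iteration. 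Your version actually streamlines the paper's: the paper, somewhat redundantly, wraps the Serre citation in its own successive approximation loop, writing the target unit as $\zeta(1+x)$ and correcting the error $\epsilon_0\in\frp_{E_\nu}^j$ one power at a time, each step re-invoking Serre's proposition at the current precision. A single application of the surjectivity to $u/N(\gamma_0)\in 1+\frp_{E_\nu}^j$ already finishes, as you observed. The one small spot worth spelling out in your sketch is that the precision doubling $\frp_{E_\nu}^j\to\frp_{E_\nu}^{2j}$ requires surjectivity of $\mathrm{Tr}_{LE_\nu/E_\nu}:\OO_{LE_\nu}\to\OO_{E_\nu}$ onto the full ring (which does hold for unramified extensions, since the different is trivial), whereas surjectivity only mod $\frp_{E_\nu}$ would give improvement by one power per step; either version converges by completeness.
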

\begin{proof}
    For ease of notation, let $N=N_{LE_\nu/E_\nu}$. Since $\pi_\nu/t_{\nu}^{\val_{\nu}(\pi_\nu)}$ is a unit in $\OO_{E_\nu}$ we can write $\pi_\nu/t_{\nu}^{\val_\nu(\pi_\nu)}=\zeta(1+x)$ for some root of unity $\zeta$ and some $x \in \frp_{E_{\nu}}$. Then there is an $\epsilon_0 \in \frp_{E_{\nu}}^j$ such that $N(\gamma_0)=\zeta(1+x+\epsilon_0)$. 
    By \cite[Proposition 3, p.~82]{SerreLocalFields} the norm $N$ surjectively maps $(1+\frp_{LE_\nu}^j)$ onto $(1+\frp_{E_\nu}^j)$.
    Hence, there is an element $\delta_0 \in \frp_{LE_{\nu}}^j$ such that $N(1+\delta_0)=1+(-\epsilon_0)$. Then $N(\gamma_0(1+\delta_0))=\zeta(1+x+\epsilon_1)$ with $\epsilon_1 \in \frp_{E_{\nu}}^{j+1}$ and we can find $\delta_1 \in \frp_{LE_{\nu}}^{j+1}$ such that $N(1+\delta_1)=1+(-\epsilon_1)$. Then $N(\gamma_0(1+\delta_0)(1+\delta_1))=\zeta(1+x+\epsilon_2)$ with $\epsilon_2 \in \frp_{E_{\nu}}^{j+2}$. Continuing this process (since $\OO_{E_\nu}^\times$ and  $\OO_{LE_\nu}^\times$ are complete with respect to the topology induced by $1+\frp_{E_\nu}^n$ respectively $1+\frp_{LE_\nu}^n$), we find $\delta \in \frp_{LE_{\nu}}^{j}$ such that $N(\gamma_0(1+\delta))=\zeta(1+x)$. 
    Setting $\gamma_1=\gamma_0\delta$ concludes the proof.
\end{proof}

We are now ready to give the algorithm 
that computes the isomorphism classes of $W'_R\{F',V'\}$-ideals, for $F'$ of $W$-type.
\begin{alg}
\label{alg:FVclasses}\hfill\\
    \textbf{Input}: 
    A set $\mathcal{I}=\{ \widetilde{I}_1,\ldots,\widetilde{I}_r \}$ of fractional $\widetilde W_R$-ideals representing the $\Delta'$-isomorphism classes of all fractional $W'_R$-ideals whose extension to $\OO_{A'}$ is in $\Upsilon$ (see Definition~\ref{def:Upsilon}).\\
    \textbf{Output}: 
    A subset of $\mathcal{I}$, consisting of representatives of the $\Delta'$-isomorphism classes of $W'_R\{F',V'\}$-ideals.\\
    \begin{enumerate}[label=({\arabic*}), ref={\arabic*},wide=0pt, leftmargin=\parindent]\vspace{-1em}
        \item Pick a $W'_R\{F',V'\}$-ideal $J$ with maximal multiplicator ring $\OO_{A'}$ using Proposition~\ref{prop:FVstable_OO}, and compute~$\widetilde J$.
        \item \label{alg:FVclasses:scaleJ} If necessary, scale $\widetilde{J}$ by multiplying by an element in 
        $\widetilde\Delta(E)$ so that $\widetilde J \subseteq \OO_{\widetilde A}$.
        \item \label{alg:FVclasses:scale_allI} If necessary, scale each $\widetilde I_k \in \mathcal{I}$ by multiplying by an element in 
        $\widetilde\Delta(E)$ so that $\widetilde I_k \subseteq \widetilde J$.
        \item \label{alg:FVclasses:m0}
        Let $m_0 = \max_k\{ \val_p(N_k) \}$, where $N_k$ is the exponent of the finite quotient $\widetilde J/\widetilde I_k$.
        \item \label{alg:FVclasses:foreachnu} For each $\nu$ with slope $s(\nu)\in(0,1)$ do the following (to ease the notation we partially suppress the dependency on $\nu$):
            \begin{enumerate}[label=({\ref{alg:FVclasses:foreachnu}.\alph*}), ref={\ref{alg:FVclasses:foreachnu}.\alph*}]
                \item Let $u$ be an element of $E$ representing a uniformizer for $\nu$, and let $g=g_\nu$ and $\val = \val_\nu$.
                \item Compute $w = \pi/u^{\val(\pi)}$.
                \item Compute the maximal ideals $\widetilde\frP_1,\ldots,\widetilde\frP_g$ of $\OO_{\widetilde A}$ above $\nu$, all with the same ramification index $e_\nu$, see Remark~\ref{rmk:tilde}.\ref{rmk:tilde:primes_above_nu}.
                \item \label{alg:FVclasses:Q_nu} Let $Q=\prod_{i=1}^{g}\OO_{ \widetilde A}/\widetilde \frP_i^{e_\nu(m_0+1)}$.
                \item Compute the multiplicative subgroup $U = \prod_{i=1}^{g}\left( \OO_{\widetilde A}/\widetilde \frP_i^{e_\nu(m_0+1)}\right)^\times$ of $Q$, cf.~\cite{HessPauliPohst03}.
                \item \label{alg:FVclasses:sigma_Q} Compute $\sigma_Q$, the automorphism of $Q$ induced by $\widetilde\sigma\colon \widetilde A\to \widetilde A$, see Remark~\ref{rmk:alg3_sigma_fin_quot}.
                \item \label{alg:FVclasses:hom_onU} Compute the group homomorphism
                \[\begin{tikzcd}[row sep = 0.2em]
                    \varphi: \left( \OO_{\widetilde A}/\widetilde \frP_g^{e_\nu(m_0+1)}\right)^\times \arrow[r,rightarrow] &  U \arrow[r,rightarrow]  & U \\
                    \gamma \arrow[r,mapsto] & (1,\ldots,1,\gamma) & \\
                    & \beta \arrow[r,mapsto]  & \beta \beta^{\sigma_{Q}} \cdots \beta^{\sigma_{Q}^{a-1}},
                \end{tikzcd}\]
                where, as usual, $a=\log_p(q)$. 
                \item Let $w_{U}$ be the image of $\widetilde\Delta(w)$ in $U$.
                \item Let $\gamma_0$ be any preimage of $w_{U}$ via $\varphi$ 
                \item Let $u_0$ be the image of $\tilde \Delta(u^{\val(\pi)g/a})$ in $ \OO_{\widetilde A}/\widetilde \frP_g^{e_\nu(m_0+1)}$.
                \item \label{alg:FVclasses:alpha_Q} Let $\alpha_{Q_\nu} = (1,\ldots,1,\gamma_0)\cdot(1,\ldots,u_0) \in Q$.
            \end{enumerate}
        \item \label{alg:FVclasses:alpha} Use the Chinese Remainder Theorem to compute an element $\alpha'$ in $\widetilde A$ which maps to $\alpha_{Q,\nu}$ (defined in step~\eqref{alg:FVclasses:alpha_Q}) for each place $\nu$ of $E$ of slope in $(0,1)$.
        \item Compute the maximal ideals $\widetilde{\frp}_1,\ldots,\widetilde{\frp}_n$ of $\widetilde W_R$ which lie below the maximal ideals of $\OO_{\widetilde A}$ above the places $\nu$ of $E$ of slope in $(0,1)$.
        \item For each $i=1,\ldots,n$, compute a positive integer $m_i$ such that $\vert \widetilde J/p^{m_0+1} \widetilde J\vert \leq \vert \widetilde W_R/\widetilde{\frp}_i \vert^{m_i}$.
        \item \label{alg:FVclasses:Qm0} Let $Q_{m_0+1}=\widetilde J/\left(p^{m_0+1}\widetilde J+\prod_{i=1}^n\widetilde{\frp}_i^{m_i}\widetilde J\right)$ and $Q_{m_0}=\widetilde J/\left(p^{m_0}\widetilde J+\prod_{i=1}^n\widetilde{\frp}_i^{m_i}\widetilde J\right)$
        and compute the natural projection $\mathrm{pr}\colon Q_{m_0+1} \to Q_{m_0}$.
        \item Let $\alpha_{m_0+1}$ (resp.~$\alpha_{m_0}$) denote the multiplication-by-$\alpha'$ map on $Q_{m_0+1}$ (resp.~$Q_{m_0}$).
        \item \label{alg:FVclasses:sigma_red}
        Compute compatible approximations $\sigma_{m_0+1}$ and $\sigma_{m_0}$ of $\sigma$ on $Q_{m_0+1}$ and $Q_{m_0}$, respectively, keeping the compatibility with the previously computed $\sigma_Q$ (see Remark~\ref{rmk:alg3_sigma_fin_quot}).
        \item \label{alg:FVclasses:Fm01} Define $F_{m_0+1}\colon Q_{m_0+1} \to Q_{m_0+1}$ as $x \mapsto \alpha_{m_0+1}(x^{\sigma_{m_0+1}})$, and $F_{m_0}\colon Q_{m_0} \to Q_{m_0}$ as $x \mapsto \alpha_{m_0}(x^{\sigma_{m_0}})$.
        \item Compute the homomorphism $m_p\colon Q_{m_0+1} \to Q_{m_0+1}$ induced by the multiplication-by-$p$ map.
        \item \label{alg:FVclasses:foreachgen} For each generator as a finite group $\gamma$ of $Q_{m_0}$:
            \begin{enumerate}[label=({\ref{alg:FVclasses:foreachgen}.\alph*}), ref={\ref{alg:FVclasses:foreachgen}.\alph*}]
                \item Pick $x_\gamma \in Q_{m_0+1}$ such that $\mathrm{pr}(x_\gamma) = \gamma$.
                \item Pick $z_\gamma \in Q_{m_0+1}$ such that $F_{m_0+1}(z_\gamma) = m_p(x_\gamma)$.
            \end{enumerate}
        \item \label{alg:FVclasses:V} Compute $V_{m_0}\colon Q_{m_0} \to Q_{m_0}$ by setting $V_{m_0}(\gamma) = \mathrm{pr}(z_\gamma)$ for each $\gamma \in Q_{m_0}$.
        \item For each $\widetilde I_k \in \mathcal{I}$ compute its image $I_{k,m_0}$ in $Q_{m_0}$ under the projection map from $\widetilde{J}$.
        \item Return $\{ \widetilde{I}_k \in \mathcal{I} \text{ such that } I_{k,m_0} = I_{k,m_0} + F_{m_0}(I_{k,m_0}) + V_{m_0}(I_{k,m_0}) \}$. 
    \end{enumerate}
\end{alg}

Before proving that Algorithm~\ref{alg:FVclasses} is correct 
we highlight in Remark~\ref{rmk:alg3_sigma_fin_quot} some important subtleties in how we choose the precision and in Remark~\ref{rmk:a-numbers} how this choice allows us to compute the $a$-numbers of the abelian varieties.

\begin{remark}\label{rmk:alg3_sigma_fin_quot}
    In Algorithm~\ref{alg:FVclasses}, we need to compute the action of $\sigma_{Q}$ on $Q$ in Step~\eqref{alg:FVclasses:sigma_Q} (where $Q$ is the quotient depending on a place $\nu$ of $E$ defined in Step~\eqref{alg:FVclasses:Q_nu}), and of $\sigma_{m_0+1}$ and $\sigma_{m_0}$ on $Q_{m_0+1}$ and $Q_{m_0}$ in Step~\eqref{alg:FVclasses:sigma_red}, all induced by the action of $\sigma$.
    In Step~\eqref{alg:FVclasses:sigma_Q}, for each $\nu$, the quotient $Q$ is a factor of of the finite ring $\OO_{\widetilde A}/p^{m_0+1}\OO_{\widetilde A}$.
    As in Remark~\ref{rmk:units_global}, we compute a $\sigma$-equivariant presentation of 
    $\OO_{\widetilde A}/p^{m_0+1}\OO_{\widetilde A}$ as an $\OO_{\widetilde L}/(p\OO_{\widetilde L})^{m}$-module for $m$ defined by $\vert \OO_{\widetilde A}/p^{m_0+1}\OO_{\widetilde A} \vert = \vert \OO_{\widetilde L}/p\OO_{\widetilde L} \vert^{m}$.  
    In Step~\eqref{alg:FVclasses:sigma_red}, we proceed as follows.
    By Step~\ref{alg:FVclasses:scaleJ}, we have $\widetilde J \subseteq \OO_{\widetilde A}$. 
    Let $m' = m_0+1 + \val_p([\OO_{\widetilde A}:\widetilde{J}])$, so that
    \[ p^{m'}\OO_{\widetilde A} \subseteq p^{m_0+1}\widetilde{J} \subseteq \left(p^{m_0+1}\widetilde J+\prod_{i=1}^n\widetilde{\frp}_i^{m_i}\widetilde J\right) \subseteq \widetilde{J} \subseteq \OO_{\widetilde A}. \]
    Then a reduction $\sigma_{m'}$ of $\sigma$ on $\mathfrak{o}_A = \OO_{\widetilde A}/p^{m'}\OO_{\widetilde A}$, computed as in the preceding paragraph, will induce well-defined approximations $\sigma_{m_0}$ on $Q_{m_0}$ and $\sigma_{m_0+1}$ on $Q_{m_0+1}$. 
\end{remark}

\begin{remark} \label{rmk:a-numbers}
    The choice of $m_0$ made in Step \ref{alg:FVclasses:m0} in Algorithm~\ref{alg:FVclasses} allows us to compute the $a$-numbers of the Dieudonn{\'e} modules.
    Indeed, if~$\widetilde{I}_k$ is in the output set, then the $a$-number of the corresponding Dieudonn\'e module equals
    \[ \dim_{\F_q} \left( \frac{I_{k,m_0}}{F_{m_0}(I_{k,m_0})+V_{m_0}(I_{k,m_0})} \right).\]
    If one needs to know the action of $F'$ and $V'$ to higher precision, it suffices to increase the value of $m_0$ in Step \ref{alg:FVclasses:m0}. 
\end{remark}

\begin{thm}\label{thm:alg:FVclasses}
    Algorithm~\ref{alg:FVclasses} is correct.    
\end{thm}
\begin{proof}
    By Proposition~\ref{prop:ext_of_FV_stable}, the extension of a $W'_R\{F',V'\}$-ideal to $\OO_{A'}$ is a $W'_R\{F',V'\}$-ideal.
    Hence, considering only such ideals (that is, whose extension is in $\Upsilon$) in the input is not a limitation.

    The algorithm is an application of Lemma~\ref{lem:step3_eq_cond}: for each $I \in \mathcal{I}$, we want to check whether it is stable under the action of $F'$ and $V'$.
    The parameter $m_0$ is chosen in such a way that is it sufficient to look at the image of $I$ image in the finite quotient $Q_{m_0}$ defined in Step~\eqref{alg:FVclasses:Qm0}, see Remark~\ref{rmk:alg3_sigma_fin_quot}.
    So, we need to compute representations of $F'$ and $V'$ in $Q_{m_0}$.
    
    First consider $F'$. 
    In Step (\ref{alg:FVclasses:foreachnu}.i), for each $\nu$, we produce an element $\gamma_0$ in the multiplicative group of $\OO_{\widetilde A}/\widetilde \frP_g^{e_\nu(m_0+1)} \cong \mathcal O_{LE_\nu}/\frp_{LE_\nu}^{e_{\nu}(m_0+1)}$.
    This element $\gamma_0$ exists because the extension $LE_\nu/E_\nu$ is unramified, see \cite[p.~544]{Wat69} and \cite[Chapter V, \S 2, Corollary]{SerreLocalFields}.
    Under this isomorphism we get an element that with slight abuse of notation we also call $\gamma_0 \in \mathcal O_{LE_\nu}^\times$. 
    The property in (\ref{alg:FVclasses:foreachnu}.i) then translates to the equality $N_{LE_\nu/E_\nu}(\gamma_0) = \pi_\nu/t_{\nu}^{\val_{\nu}(\pi_\nu)}$ in $\mathcal O_{LE_\nu}/\frp_{LE_\nu}^{e_\nu(m_0+1)}$, for some uniformizer $t_\nu$ in $E_\nu$. 
    By Lemma~\ref{lem:Frob_fin_quot}, there exists an element 
    $\gamma_1 \in \frp_{LE_\nu}^{e_\nu(m_0+1)}$
    such that $N_{LE_\nu/E_\nu}(\gamma_0+\gamma_1) = \pi_\nu/t_{\nu}^{\val_{\nu}(\pi_\nu)}$. This means that if we define $F'_\nu=\alpha_\nu \circ \sigma$ with 
    \[\alpha_\nu=(1,\ldots,t_\nu^{\val_{\nu}(\pi_\nu)g_\nu/a}(\gamma_0 +\gamma_1)),
    \]
    then $F'_\nu$ has the Frobenius property and is of $W$-type, by Lemma~\ref{lem-F}. Putting $\alpha=(\alpha_\nu)_{\nu | p} \in A'$ we get $F'=\alpha \circ \sigma$, an additive map on $A'$ of $W$-type. Step \eqref{alg:FVclasses:alpha} produces an element $\alpha' \in \widetilde A$, from which we define the additive map $F_{m_0+1}\colon Q_{m_0+1}\to Q_{m_0+1}$ in Step~\eqref{alg:FVclasses:Fm01}. 
    Since all the approximations of $\sigma$ computed using Remark~\ref{rmk:alg3_sigma_fin_quot} are compatible with each other, we see that
    $F_{m_0+1}$ is the reduction of $F'$ restricted to $J'$ under the isomorphism $Q_{m_0+1} \cong J'/p^{m_0+1}J'$ and that, similarly, $F'$ also induces $F_{m_0}$ on $Q_{m_0}$.

    Now consider $V'$.
    We cannot directly compute an approximation of $V'$ on $Q_{m_0}$ using $F_{m_0}$, since this map might not be invertible.
    Instead, in Step~\ref{alg:FVclasses:V}, we compute $V_{m_0}$ from the representation $F_{m_0+1}$ of $F'$ on the larger quotient~$Q_{m_0+1}$, that is, with a higher precision.
    We now give the details.
    Let $\mathrm{pr}_{m_0}\colon J'\to Q_{m_0}$ and $\mathrm{pr}_{m_0+1}\colon J'\to Q_{m_0+1}$ be the natural projections.
    Observe that $\mathrm{pr}_{m_0} = \mathrm{pr} \circ \mathrm{pr}_{m_0+1}$.
    The following diagram shows the maps involved in this proof.

\begin{center}
\begin{tikzpicture}
    \node[] (J1) at (0,0) {$J'$};
    \node[] (J2) at (5,0) {$J'$};
    \node[] (J3) at (10,0) {$J'$};
    \node[] (Q1) at (1,-1.5) {$Q_{m_0+1}$};
    \node[] (Q2) at (6,-1.5) {$Q_{m_0+1}$};
    \node[] (Q3) at (9,-1.5) {$Q_{m_0+1}$};
    \node[] (QQ1) at (0,-3) {$Q_{m_0}$};
    \node[] (QQ2) at (5,-3) {$Q_{m_0}$};
    \node[] (QQ3) at (10,-3) {$Q_{m_0}$};
    \draw[->] (J1) -- node[anchor=west] {\small{$\mathrm{pr_{m_0+1}}$}} (Q1.north west);
    \draw[->] (J1) -- node[anchor=east] {\small{$\mathrm{pr_{m_0}}$}} (QQ1);
    \draw[->] (Q1.south west) -- node[anchor=west] {\small{$\mathrm{pr}$}} (QQ1);
    \draw[->] (J2) -- node[anchor=west] {\small{$\mathrm{pr_{m_0+1}}$}} (Q2.north west);
    \draw[->] (J2) -- node[anchor=east] {\small{$\mathrm{pr_{m_0}}$}} (QQ2);
    \draw[->] (Q2.south west) -- node[anchor=west] {\small{$\mathrm{pr}$}} (QQ2);
    \draw[->] (J3) -- node[anchor=east] {\small{$\mathrm{pr_{m_0+1}}$}} (Q3.north east);
    \draw[->] (J3) -- node[anchor=west] {\small{$\mathrm{pr_{m_0}}$}} (QQ3);
    \draw[->] (Q3.south east) -- node[anchor=east] {\small{$\mathrm{pr}$}} (QQ3);
    
    \draw[->] (J1) -- node[anchor=south] {\small{$V'$}} (J2);
    \draw[->] (J2) -- node[anchor=south] {\small{$F'$}} (J3);
    \path[->] (J1.north east) edge[bend left=15] node[anchor=north]{\small{$p$}} (J3.north west);
    
    \draw[->] (Q2) -- node[anchor=south] {\small{$F_{m_0+1}$}} (Q3);
    \draw[->] (QQ2) -- node[anchor=south] {\small{$F_{m_0}$}} (QQ3);
    \draw[->] (QQ1) -- node[anchor=south] {\small{$V_{m_0}$}} (QQ2);
\end{tikzpicture}
\end{center}

    Let~$x_{m_0}$ be an element of $Q_{m_0}$.
    Denote by $x$ a preimage of $x_{m_0}$ in $J'$ via $\mathrm{pr}_{m_0}$.
    Let $x_{m_0+1} = \mathrm{pr}_{m_0+1}(x)$ and $y =V'(x) \in J$.
    It remains to show that $\mathrm{pr}_{m_0}(y) = V_{m_0}(x_{m_0})$.
    By the construction of $V_{m_0}$, we have 
    \[ V_{m_0}(x_{m_0}) = \mathrm{pr}(z_{m_0+1}), \]
    where $z_{m_0+1}$ is an element of $Q_{m_0+1}$ such that 
    \[ p\cdot x_{m_0+1} = F_{m_0+1}(z_{m_0+1}). \]
    Let $z$ be a preimage of $z_{m_0+1}$ in $J'$ via $\mathrm{pr}_{m_0+1}$.
    Note that $F'(y) =p\cdot x$.
    Hence
    \[  \mathrm{pr}_{m_0+1}(F'(y)) = p\cdot x_{m_0+1}=F_{m_0+1}(z_{m_0+1})=\mathrm{pr}_{m_0+1}(F'(z)), \]
    where the last equality follows by the definition of $F_{m_0+1}$.
    Therefore 
    \[ F'(y)-F'(z)=F'(y-z) \in p^{m_0+1}J'. \]
    By applying $V'$, we then get
    \[  p(y-z) \in V'(p^{m_0+1}J) \subseteq p^{m_0+1}J'. \]
    Dividing by $p$ gives
    \[ y-z \in p^{m_0}J'. \]
    By applying $\mathrm{pr}_{m_0}$ we obtain
    \[ \mathrm{pr}_{m_0}(y) = \mathrm{pr}_{m_0}(z) = \mathrm{pr} \circ \mathrm{pr}_{m_0+1}(z) = \mathrm{pr}(z_{m_0+1}) = V_{m_0}(x_{m_0}), \]
    as required.
\end{proof}

\section{Computations and applications} \label{sec:examples}
\subsection{Overview of the algorithm}
As explained in Section~\ref{sec-abvars}, the computation of the isomorphism classes of abelian varieties in the isogeny class $\mathcal{A}_\pi$ is divided into several parts. 
We briefly summarize how each part of the algorithm works:
\begin{itemize}
    \item The prime-to-$p$ part $\prod_{\ell \neq p} \mathfrak{X}_{\pi,\ell}$: use the results in Section~\ref{sec:tate}, in particular Theorem~\ref{thm:W_R_X_ell}, to reduce the problem to a finite number of calls of \cite[\texttt{ComputeW}]{Mar23_Loc}.
    \item The $p$-part $\mathfrak{X}_{\pi,p}$, which splits into three subparts: the \'etale, the multiplicative  and the local-local parts, see Subsection~\ref{sec:connetale}.
    The computation of the first two is reduced to a finite number of calls of \cite[\texttt{ComputeW}]{Mar23_Loc} by Proposition~\ref{prop:Rp01} and the results of Section~\ref{sec:tate}.
    The computation of the local-local part is described in Section~\ref{sec:aftergreenredblue}, see in particular Algorithms~\ref{alg:Deltaclasses_OO} and \ref{alg:FVclasses}.
    \item Finally, the isomorphisms classes with given local parts form an orbit of the class group $\Cl(S)$ of the endomorphism ring $S$, which is uniquely determined by the local parts. 
    The computation of $\Cl(S)$ is reduced to the classical computation of the class group $\Cl(\OO_E)$ of the maximal order $\OO_E$ of $E$, see \cite{klupau05}.
\end{itemize}
We have implemented the whole procedure in Magma \cite{Magma}.
The package is available at \cite{IsomClAbVarFqCommEndAlg}.
The repository includes an appendix with the pseudo-code, together with some auxiliary technical details.
The examples in this section, which have been computed using our implementation, show some unexpected behavior of endomorphism rings, which we now describe in detail.

\subsection{Properties of the lattice of endomorphism rings}
Let $\mathcal{A}_\pi$ be an isogeny class of abelian varieties over $\F_q$ of dimension $g$ with commutative endomorphism algebra $E=\Q[\pi]$.
Put $R=\Z[\pi,q/\pi]$, as before.
Let $\mathcal{S}$ be the set of overorders of $R$ and let $\mathcal{E}$ be the subset of~$\mathcal{S}$ consisting of endomorphism rings, that is, orders $T$ such that there exists $X \in \mathcal{A}_\pi$ with $\End(X)=T$.
Let $\mathcal T$ be the set of all maximal ideals of $R$ except the one above $p$ of slope in $(0,1)$. In the rest of the section, we will associate to each order $S \in \mathcal{S}$ four nonnegative integers: $n(S)$, the number of isomorphism classes of abelian varieties in $\mathcal{A}_\pi$ with endomorphism ring $S$ (as defined above); $w(S)$,  the number of elements of $\cW_{\mathcal{T}}(S)$ with multiplicator ring $S$; $d(S)$, the number of $W'_R\{F',V'\}$-ideals with endomorphism ring $S$; and $h(S)$, the class number of $S$.
\begin{prop} \label{prop:nwdh} For all $S \in \mathcal{S}$,
    \[
    n(S)=w(S) d(S)h(S).
    \]
\end{prop}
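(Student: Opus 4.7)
The plan is to combine the fiber decomposition in Corollary~\ref{cor:Api} with the local factorization of the data $Y \in \mathfrak{X}_\pi$ established in Sections~\ref{sec:tate} and~\ref{sec:dieudonne}.

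First, I note that Corollary~\ref{cor:Api} expresses $|\mathcal{A}_\pi^{\mathrm{isom}}|$ as a sum over $Y \in \mathfrak{X}_\pi$ of $h(\mathcal{O}_Y)$. Each fiber of the map $\Phi: \mathcal{A}_\pi^{\mathrm{isom}} \to \mathfrak{X}_\pi$ above $Y$ consists of $h(\mathcal{O}_Y)$ isomorphism classes of abelian varieties, all sharing the common endomorphism ring $\mathcal{O}_Y$ (as already observed in the derivation of Equation~\eqref{eq-OX}). Restricting the sum to those $Y$ with $\mathcal{O}_Y = S$ yields
\[ n(S) = h(S) \cdot \#\{ Y \in \mathfrak{X}_\pi : \mathcal{O}_Y = S \}, \]
so it suffices to show $\#\{ Y : \mathcal{O}_Y = S \} = w(S) \cdot d(S)$.

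Next, I use the factorization $\mathfrak{X}_\pi = \mathfrak{X}_{\pi,p} \times \prod_{\ell \neq p} \mathfrak{X}_{\pi,\ell}$ together with the splitting of $\mathfrak{X}_{\pi,p}$ into \'etale, local-local, and multiplicative parts (Notation~\ref{not:splitR}). Since $\mathcal{O}_Y$ is an order in $E$ determined by the multiplicator rings of the local factors of $Y$, the condition $\mathcal{O}_Y = S$ decomposes into one local condition per maximal ideal of $R$. Combining Theorem~\ref{thm:W_R_X_ell}\ref{thm:W_R_X_ell:one_ell} (for the Tate modules at $\ell \neq p$) with Corollary~\ref{cor-01} and Proposition~\ref{prop:Rp01} (for the \'etale and multiplicative parts of the Dieudonn\'e module), the data consisting of the Tate modules at $\ell \neq p$ together with the \'etale and multiplicative parts of the Dieudonn\'e module, having local multiplicator ring $S_\mathfrak{l}$ at each $\mathfrak{l} \in \mathcal{T}$, is in bijection with $\cW_\mathcal{T}(S)$ and so contributes $w(S)$ to the count. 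By definition, the local-local Dieudonn\'e part with multiplicator ring $S'_p$ contributes $d(S)$. Since these two types of local data are independent, the product rule gives $\#\{Y : \mathcal{O}_Y = S\} = w(S) \cdot d(S)$.

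Putting the two steps together yields $n(S) = w(S) \cdot d(S) \cdot h(S)$. The main technical point is to verify that each bijection invoked (especially Theorem~\ref{thm:W_R_X_ell}, Proposition~\ref{prop:Rp01}, and Corollary~\ref{cor-01}) respects multiplicator rings on both sides; together with Lemma~\ref{lem-homs} (which identifies $W_R\{F,V\}$-endomorphisms of a Dieudonn\'e ideal with $E_p$-multiplications) and the local-global principle for orders in $E$, this guarantees the consistency needed to glue the local mult-ring conditions into the global condition $\mathcal{O}_Y = S$.
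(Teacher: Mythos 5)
Your argument is correct in substance and arrives at the same factorization, but it enters through a different door than the paper. The paper's proof is a one-liner: it invokes the categorical equivalence of Theorem~\ref{thm-equivalence} together with Propositions~\ref{prop:S0_to_prod_ell}, \ref{prop:ell_to_prod_frl} and~\ref{prop:Rp01}, so that abelian varieties with $\End=S$ become pairs $(I,M)$ whose endomorphism data is read off functorially, and the count splits as (local classes with prescribed multiplicator rings) times $h(S)$. You instead work directly with the adelic description of Section~\ref{sec-abvars}: from Corollary~\ref{cor:Api} and~\eqref{eq:AAX} the fiber of $\Phi$ over $Y$ has $h(\mathcal{O}_Y)$ elements, all with endomorphism ring $\mathcal{O}_Y$, giving $n(S)=h(S)\cdot\#\{Y:\mathcal{O}_Y=S\}$, and you then factor this count via Theorem~\ref{thm:W_R_X_ell}, Corollary~\ref{cor-01} and Proposition~\ref{prop:Rp01}. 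This is a legitimate, arguably more transparent route for a pure counting statement (it makes explicit where $h(S)$ comes from), at the price of re-assembling by hand what Theorem~\ref{thm-equivalence} packages: the compatibility of every bijection with multiplicator rings, which you only flag as ``the main technical point'' rather than verify, and the passage from per-prime counts to $\cW_{\mathcal{T}}(S)$, which is precisely the gluing content of Propositions~\ref{prop:S0_to_prod_ell} and~\ref{prop:ell_to_prod_frl} that the paper cites. One caveat you should state explicitly: the local data away from the local-local place having multiplicator ring $S_\frl$ at every $\frl\in\mathcal{T}$ is in bijection not with all of $\cW_{\mathcal{T}}(S)$ (which also contains classes whose local multiplicator rings are strictly larger than $S_\frl$), but with the subset of classes whose multiplicator ring is exactly $S$ at the places of $\mathcal{T}$; this restricted count is what $w(S)$ has to mean for the proposition and the examples to be consistent (e.g.\ in Example~\ref{ex:simple2}, orders non-maximal at the singular prime above $11$ still have $w(S)=1$, whereas $\vert\cW_{\mathcal{T}}(S)\vert\geq 2$ there). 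Your identification is valid only under that reading — an imprecision you share with the paper's own definition of $w(S)$, but worth making explicit, since read literally it would overcount.
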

\begin{proof} This follows from Theorem~\ref{thm-equivalence} combined with Propositions \ref{prop:S0_to_prod_ell}, \ref{prop:ell_to_prod_frl} and~\ref{prop:Rp01}.
\end{proof}

Hence, we have $S\in \mathcal{E}$, that is, $S$ is the endomorphism ring for some $X \in \mathcal{A}_\pi$, if and only if $n(S)>0$, which is also equivalent to $d(S)>0$. 

Consider the following three statements: 
\begin{enumerate}[(1)]
    \item\label{jumps} For every $S\in \mathcal{E}$ and $T\in \mathcal{S}$, if $S\subseteq T$ then $T \in \mathcal{E}$.
    \item\label{min_end} The order $S = \cap_{T \in \mathcal{E}} T$ is in $\mathcal{E}$.
    \item\label{num_iso} For every $S$ in $\mathcal{E}$, $n(\OO_E)$ divides $n(S)$.
\end{enumerate}
If $\mathcal{A}_\pi$ is ordinary, or almost ordinary, or if $q$ is prime, then statements \ref{jumps}, \ref{min_end} and \ref{num_iso} hold true; see~\cite[Corollary~4.4]{MarAbVar18}, \cite[Theorem~4.5]{OswalShankar19} (for odd characteristic) and Corollary~\ref{cor-max} together with Example~\ref{ex:almost_ord} (for all characteristics).\\

From now on, assume further that $\mathcal{A}_\pi$ has $p$-rank $<g$.
Let $\frp$ be the maximal ideal of $R=\Z[\pi,q/\pi]$ corresponding to the local-local part.
Consider the following statements:
\begin{enumerate}[(*)]
    \item\label{Endmax01} $\End(X)_\frp$ is maximal for every $X$ in $\mathcal{A}_\pi$.
\end{enumerate}
\begin{enumerate}[(a)]
    \item \label{Pabove} There exists a unique maximal ideal $\frP$ of $\OO_E$ above $\frp$ with slope $\leq 1/2$.
    \item \label{slope12} All maximal ideals of $\OO_E$ above $\frp$ have slope $1/2$.
    \item \label{minsl} For each maximal ideal $\frP$ of $\OO_E$ above $\frp$ we have that the slope equals $1/n_\frP$ or $1-1/n_{\frP}$, where~$n_\frP$ is the dimension of $E_\frP$ over $\Q_p$, that is, the product of the ramification index and the inertia degree.
\end{enumerate}

Note that $\mathcal{A}_\pi$ is almost ordinary if and only if all three conditions \ref{Pabove}, \ref{slope12} and \ref{minsl} hold true.

\begin{prop}\label{prop:examples_implications}
    Let $\mathcal{A}_\pi$ be an isogeny class of $g$-dimensional abelian varieties over $\F_q$ with commutative endomorphism algebra and $p$-rank $<g$.
    We have the following implications:
    \[ \ref{Pabove}+\ref{slope12}+\ref{minsl} \Longrightarrow \ref{Endmax01} \Longrightarrow \ref{jumps}+\ref{min_end}+\ref{num_iso}. \]
\end{prop}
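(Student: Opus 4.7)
For the first implication, the plan is to show that \ref{Pabove}+\ref{slope12}+\ref{minsl} force $\mathcal{A}_\pi$ to be almost-ordinary and then invoke \cite[Proposition 2.1]{OswalShankar19}. Condition \ref{slope12} says every maximal ideal $\frP$ of $\OO_E$ above $\frp$ has slope $1/2$, and \ref{minsl} then forces $n_\frP=2$; moreover \ref{Pabove} combined with \ref{slope12} leaves exactly one such $\frP$. Hence the local-local part of the Dieudonn\'e module has rank $n_\frP=2$, so the $p$-rank equals $g-1$ and $\mathcal{A}_\pi$ is almost-ordinary. The cited result of Oswal and Shankar, already quoted in the introduction, then gives \ref{Endmax01}.

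For the second implication, I would assume \ref{Endmax01} and first pin down $\mathcal{E}$. By Waterhouse (Proposition~\ref{prop:FVstable_OO}) there exists a $W'_R\{F',V'\}$-ideal $M'$ with multiplicator ring $\OO_{A'}$, and then $\Delta'^{-1}(M')$ is a fractional $\OO_{E'_p}$-ideal, which is automatically invertible. Applying Proposition~\ref{prop:extension_are_end} with $T'=\OO_{E'_p}$ shows that every overorder of $R$ in $\OO_E$ whose $\frp$-completion equals $\OO_{E'_p}$ belongs to $\mathcal{E}$. Combined with \ref{Endmax01} (which forces $T_\frp=\OO_{E'_p}$ for every $T\in\mathcal{E}$), this yields the clean description $\mathcal{E}=\{T\in\mathcal{S}:T_\frp=\OO_{E'_p}\}$, i.e.\ the set of overorders in $\OO_E$ of the smallest order $S\supseteq R$ with $S_\frp=\OO_{E'_p}$. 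Statements \ref{jumps} and \ref{min_end} then follow at once: any overorder of $S'\in\mathcal{E}$ contains $S$ and hence lies in $\mathcal{E}$, and $\bigcap_{T\in\mathcal{E}}T=S\in\mathcal{E}$.

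For \ref{num_iso}, I would invoke Proposition~\ref{prop:nwdh} to write $n(T)=w(T)d(T)h(T)$. Under \ref{Endmax01}, $d(T)$ counts $W'_R\{F',V'\}$-ideals whose $\frp$-local endomorphism ring is the maximal order $\OO_{E'_p}$, and this number is independent of $T\in\mathcal{E}$; call it $r$. Since $\OO_E$ is maximal, $w(\OO_E)=1$, so $n(\OO_E)=r\cdot h(\OO_E)$ and $n(T)=r\cdot w(T)\cdot h(T)$. Thus $n(\OO_E)\mid n(T)$ reduces to $h(\OO_E)\mid w(T)h(T)$, which follows from the standard surjection $\Cl(T)\twoheadrightarrow\Cl(\OO_E)$ (applied component-wise in the \'etale algebra $E$) that already yields the stronger statement $h(\OO_E)\mid h(T)$. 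The only mildly subtle point is verifying that $d(T)$ really is constant on $\mathcal{E}$ under \ref{Endmax01}; apart from this, the proof is bookkeeping built on results already established in the paper.
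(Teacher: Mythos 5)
Your proof is correct and follows essentially the same route as the paper: it reduces (a)+(b)+(c) to the almost-ordinary case and invokes Oswal--Shankar for the first implication, and for the second implication it uses Proposition~\ref{prop:extension_are_end} (the paper instead cites Corollary~\ref{cor:min_end_ell_01}, which is itself derived from that proposition) to pin down $\mathcal{E}$ and then applies the factorization $n(S)=w(S)d(S)h(S)$ from Proposition~\ref{prop:nwdh} exactly as in the paper, using $w(\OO_E)=1$, constancy of $d$ on $\mathcal{E}$, and surjectivity of $\Cl(T)\twoheadrightarrow\Cl(\OO_E)$.
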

\begin{proof}
    As pointed out above, \ref{Pabove}, \ref{slope12} and \ref{minsl} all hold if and only if $\mathcal{A}_\pi$ is almost ordinary.
    The first implication is shown in Example~\ref{ex:almost_ord}.

    Corollary~\ref{cor:min_end_ell_01} shows that \ref{Endmax01} implies \ref{jumps} and \ref{min_end}.
    We now show that \ref{Endmax01} also implies \ref{num_iso}, completing the proof of the second implication.
    By Proposition~\ref{prop:nwdh}, the number $n(S)$ is divisible by $d(S)h(S)$.
    Observe that \ref{Endmax01} implies that $d(S)=d(\OO_E)$ for each $S \in \mathcal{E}$, and that $h(\OO_E)$ divides $h(S)$ since the extension map $\Cl(S)\to \Cl(\OO_E)$ is surjective.
    Every fractional $\OO_E$-ideal is locally principal, hence $w(\OO_E)=1$.
    It follows that the number of abelian varieties with endomorphism ring $\OO_E$ is determined only by the class group of $\OO_E$ and by the local-local part.
    More precisely, we have $n(\OO_E) = d(\OO_E)h(\OO_E)$. 
    Combining all these statements we see that $n(\OO_E)$ divides $n(S)$ for each $S\in \mathcal{E}$, as required.
\end{proof}
In each of the following three examples (Examples~\ref{ex:simple1}, \ref{ex:nonsimple1} and ~\ref{ex:simple2}), we negate exactly one of the three statements \ref{Pabove}, \ref{slope12}, \ref{minsl}, and show that \ref{Endmax01} does not hold and that \ref{jumps}, \ref{min_end} or \ref{num_iso} fails.

In Example~\ref{ex:simple3}, we exhibit an isogeny class with $p$-rank $<g$ for which all of \ref{jumps}, \ref{min_end} and \ref{num_iso} hold true, but \ref{Endmax01} fails. 
This example also shows that for abelian varieties that are neither ordinary nor defined over a prime field, \ref{jumps} + \ref{min_end} + \ref{num_iso} is not equivalent to the isogeny class being almost ordinary.

In Example~\ref{ex:nonsimple2}, we exhibit an isogeny class with abelian varieties with the same endomorphism ring but whose $a$-numbers are different.

In all the examples below we have $w(S)=1$ for all $S \in \mathcal S$. We would have $w(S) > 1$, for some $S \in \mathcal S$, if and only if $R$ is not Bass at some maximal ideal of $\mathcal T$. For an example of an $R$ with this property, see \cite[Example 7.4]{MarAbVar18}. 

We conclude the section with some observations and a further example, arising from the computations we have performed.

\subsection{Examples}
In the following graphs, each vertex represents an overorder $S$ of $R$, labeled with the pair $(d(S),h(S))$, and each edge represents an inclusion, labeled with its index.
If $S$ is in $\mathcal{E}$, that is, $S$ is the endomorphism ring for some $X \in \mathcal{A}_\pi$, then we add in the subscript of the label, the $a$-numbers of the isomorphism classes of Dieudonn{\'e} modules. 
In all examples (except Example~\ref{ex:nonsimple2}) we write only one value, since all the Dieudonn{\'e} modules have the same $a$-number.

\begin{example}\label{ex:simple1}
    Consider the polynomial
    \[h=x^8 + x^7 + x^6 + 4x^5 - 4x^4 + 16x^3 + 16x^2 + 64x + 256.\]
    It determines an isogeny class of geometrically simple abelian fourfolds over $\F_{4}$ with commutative endomorphism algebra $E=\Q[\pi]=\Q[x]/h$.
    The isogeny class has LMFDB label \href{http://www.lmfdb.org/Variety/Abelian/Fq/4/4/b_b_e_ae}{4.4.b\_b\_e\_ae} and $p$-rank~$2$.
    The algebra $E$ has $3$ places above $p=2$ with slopes, ramification indices and inertia degrees equal to $(0, 1, 2), (1, 1, 2), (1/2, 2, 2)$, respectively.
    Hence, this isogeny class satisfies conditions \ref{Pabove} and \ref{slope12}, but not condition~\ref{minsl}.
    
    The unique maximal ideal of the order $R=\Z[\pi,16/\pi]$ in $E$ above $2$ is singular and in $\mathcal{P}_{R_2}^{(0,1)}$.
    One computes that $R$ has $13$ overorders $S$ and for each of these $w(S)=1$.
    It follows from Proposition~\ref{prop:nwdh} that $n(S)=d(S) h(S)$.
    \[
    \begin{tikzcd}
             &          & \mathbf{(2,32)}_{[1]} \arrow[rr,hook,"4" description] &        & \mathbf{(4,8)}_{[2]} \arrow[ddrr,hook,"4" description] & &\\
             &          & (0,48) \arrow[dr,hook,"2" description] &        & (0,12)\arrow[dr,hook,"2" description] & &\\[-2em]
    (0,192) \arrow[r,hook,"2" description] & (0,96) \arrow[uur,hook,"2" description] \arrow[ur,hook,"2" description] \arrow[ddr,hook,"2" description] \arrow[dr,hook,"2" description] &        & \mathbf{(2,24)}_{[1]} \arrow[uur,hook,"2" description] \arrow[ur,hook,"2" description] \arrow[ddr,hook,"2" description] \arrow[dr,hook,"2" description] &        & \mathbf{(1,12)}_{[2]} \arrow[r,hook,"2" description] & \mathbf{(3,4)}_{[2]} \\[-2em]
             &          & (0,48) \arrow[ur,hook,"2" description]&        & (0,12) \arrow[ur,hook,"2" description]& &\\
             &          & (0,48) \arrow[uur,hook,"2" description] &        & (0,24) \arrow[uur,hook,"2" description] & &
    \end{tikzcd}
    \]
    We see that statements \ref{jumps}, \ref{min_end} and \ref{num_iso} do not hold true for this isogeny class.
    Hence, also statement \ref{Endmax01} does not hold for this isogeny class: only the maximal order has maximal local-local part.
\end{example}

\begin{example}\label{ex:nonsimple1}
    Put $h_1=x^2 - 2x + 4$, $h_2=x^2 + 2x + 4$ and consider the polynomial 
    \[h=x^4 + 4x^2 + 16 = h_1 \cdot h_2.
    \]
    It determines an isogeny class $\mathcal{A}_\pi$ of abelian surfaces over $\F_{4}$ with commutative endomorphism algebra $E=\Q[\pi]=\Q[x]/h$. 
    Put $E_1=\Q[\pi_1]=\Q[x]/h_1$ and $E_2=\Q[\pi_2]=\Q[x]/h_2$.
    Any $X \in \mathcal{A}_\pi$ is isogenous to a product of supersingular elliptic curves $C_1 \times C_2$ with $C_i \in \mathcal{A}_{\pi_i}$. Note that the supersingular curves in $\mathcal{A}_{\pi_1}$ are quadratic twists of the ones in $\mathcal{A}_{\pi_2}$.
    The LMFDB labels of $\mathcal{A}_\pi$, $\mathcal{A}_{\pi_1}$, $\mathcal{A}_{\pi_2}$ are \href{http://www.lmfdb.org/Variety/Abelian/Fq/2/4/a_e}{2.4.a\_e}, \href{http://www.lmfdb.org/Variety/Abelian/Fq/1/4/ac}{1.4.ac}, \href{http://www.lmfdb.org/Variety/Abelian/Fq/1/4/c}{1.4.c}, respectively.
    Therefore, conditions \ref{slope12} and \ref{minsl} hold, but not condition~\ref{Pabove}.

    The unique maximal ideal of the order $R=\Z[\pi,16/\pi]$ in $E$ above $2$ is singular and in $\mathcal{P}_{R_2}^{(0,1)}$.
    One computes that $R$ has $13$ overorders $S$ and for each of these $w(S)=1$.
     It follows from Proposition~\ref{prop:nwdh} that $n(S)=d(S) h(S)$.
    
    An abelian variety $X$ is isomorphic to a product $C_1\times C_2$ with $C_i \in \mathcal{A}_{\pi_i}$ 
    if and only if the order $S$ is the direct product of $S_1\times S_2$ with $S_1$ an order in~$E_1$ and $S_2$ an order in~$E_2$. Such orders are marked with a $*$ in the label in the graph below.
    The number of isomorphism classes which are products of elliptic curves can be derived from \cite[Theorem~4.5]{Schoof87} (which corrects parts of the statement of  \cite[Theorem~4.5]{Wat69}).
    
    \[
    \begin{tikzcd}
             &          &  &        & & (0,1,*)\arrow[dddr,hook,"2" description] &\\[-2em]
             &          & (0,1)\arrow[ddr,hook,"2" description] &        & (0,1,*)\arrow[ur,hook,"2" description] \arrow[dr,hook,"2" description] & &\\[-2em]
             &          &  &        & & (0,1,*)\arrow[dr,hook,"2" description]  &\\[-2em]
             &          & (0,2)\arrow[r,hook,"2" description] & \mathbf{(2,1)}_{[1]}\arrow[uur,hook,"2" description] \arrow[r,hook,"2" description]\arrow[dr,hook,"2" description]  & \mathbf{(2,1)}_{[1]}\arrow[rr,hook,"4" description] & & \mathbf{(4,1,*)}_{[2]} \\
       (0,4) \arrow[r,hook,"2" description] & (0,2) \arrow[uuur,hook,"2" description]\arrow[ur,hook,"2" description] \arrow[r,hook,"2" description] \arrow[dr,hook,"2" description] & (0,1)\arrow[ur,hook,"2" description] &   &  \mathbf{(4,1)}_{[2]}\arrow[urr,hook,"4" description] &  & \\
             &          & \mathbf{(2,2)}_{[1]}\arrow[urr,hook,"4" description] &        &  & &
    \end{tikzcd}
    \]
    We see that statements \ref{jumps}, \ref{min_end} and \ref{num_iso} do not hold true for this isogeny class.
    Hence, also statement \ref{Endmax01} does not hold for this isogeny class: only the maximal order has maximal local-local part.
\end{example}

\begin{example}\label{ex:simple2}
    Consider the polynomial 
    \[h=x^8 - 6x^7 + 18x^6 - 36x^5 + 68x^4 - 144x^3 + 288x^2 - 384x + 256.\]
    It determines an isogeny class of geometrically simple abelian fourfolds over $\F_{16}$ with commutative endomorphism algebra $E=\Q[\pi]=\Q[x]/h$.
    The isogeny class has LMFDB label \href{http://www.lmfdb.org/Variety/Abelian/Fq/4/4/ag_s_abk_cq}{4.4.ag\_s\_abk\_cq} and $p$-rank~$0$, but it is not supersingular.
    The algebra $E$ has two places above $p=2$ with slopes, ramification indices and inertia degrees equal to $(1/4,2,2),(3/4,2,2)$, respectively.
    Hence, this isogeny class satisfies conditions \ref{Pabove} and \ref{minsl}, but not condition~\ref{slope12}.
    
    The order $R=\Z[\pi,4/\pi]$ has two singular maximal ideals: one above $11$, and one above $2$ which is in $\mathcal{P}_{R_2}^{(0,1)}$.
    The index of $R$ in $\OO_E$ is $704=11\cdot 64$.
    The computation of $\prod_{\ell\neq p}\mathfrak{X}_{\pi,\ell}$ returns
    $2$ classes, which can be represented by the maximal order~$\OO_E$ and the unique overorder $T$ of $R$ with index $[\OO_E:T]=11$. 
    One then computes that $R$ has $34$ overorders $S$ and for each of these $w(S)=1$.
    It follows from Proposition~\ref{prop:nwdh} that $n(S)=d(S) h(S)$.
    Since the graph of inclusions of all overorders is too unwieldy, in the following graph, we draw the lattice of inclusion of the overorders $S$ of $R$ which are actually endomorphism rings for some $X \in \mathcal{A}_\pi$, that is, for which $d(S) > 0$. 
    \[
    \begin{tikzcd}
    {\mathbf{(2,1)}_{[1]}} \arrow[r, "4" description, hook] \arrow[rd, "11" description, hook]  & {\mathbf{(4,1)}_{[2]}} \arrow[rd, "11" description, hook]      &                          \\
    & {\mathbf{(2,1)}_{[1]}} \arrow[r, "2" description, hook]  & {\mathbf{(4,1)}_{[2]}} \\
    {\mathbf{(2,3)}_{[1]}} \arrow[r, "11" description, hook] \arrow[ruu, "4" description, hook] & {\mathbf{(2,1)}_{[1]}} \arrow[ru, "2" description, hook] &                         
    \end{tikzcd}
    \]
    On the one hand, we see from the graph that \ref{min_end} and \ref{num_iso} do not hold true for this isogeny class.
    Hence, also statement \ref{Endmax01} does not hold for this isogeny class: only the maximal order has maximal local-local part.
    On the other hand, if one considers all overorder of $R$, the inclusion with index $4$ does not factor as the composition of two inclusions. 
    It follows that this isogeny class satisfies condition \ref{jumps}.
\end{example}

\begin{example}\label{ex:simple3}
    Consider the polynomial 
    \[h=x^6 - x^5 - 3x^4 + 45x^3 - 27x^2 - 81x + 729.\]
    It determines an isogeny class of geometrically simple abelian threefolds over $\F_{9}$ with commutative endomorphism algebra $E=\Q[\pi]=\Q[x]/h$.
    The isogeny class has LMFDB label \href{http://www.lmfdb.org/Variety/Abelian/Fq/3/9/ab_ad_bt}{3.9.ab\_ad\_bt} and $p$-rank~$1$.
    The algebra $E$ has three places above $p=3$ with slopes, ramification indices and inertia degrees equal to $(0, 1, 1)$, $(1, 1, 1)$ and $(1/2, 1, 4 )$, respectively.
    Hence, this isogeny class satisfies conditions \ref{Pabove} and \ref{slope12}, but not condition~\ref{minsl}.
    
    The unique singular maximal ideal of the order $R=\Z[\pi,9/\pi]$ is the maximal ideal $(3,\pi,9/\pi)$, which is in $\mathcal{P}_{R_3}^{(0,1)}$. 
    One computes that $R$ has $4$ overorders $S$ and for each of these $w(S)=1$.
    It follows from Proposition~\ref{prop:nwdh} that $n(S)=d(S) h(S)$.
    \[
    \begin{tikzcd}
    (0,60)\arrow[r, "3" description, hook]&(0,30)\arrow[r, "3" description, hook]&\mathbf{(2,20)}_{[1]}\arrow[r, "9" description, hook]&\mathbf{(2,2)}_{[2]}
    \end{tikzcd}
    \]
    Statements \ref{jumps}, \ref{min_end} and \ref{num_iso} hold true for this isogeny class, but statement \ref{Endmax01} does not hold, since only the maximal order has maximal local-local part.
\end{example}

\begin{example}\label{ex:nonsimple2}
    Consider the polynomial 
    \[h=x^6 + 11x^5 + 60x^4 + 208x^3 + 480x^2 + 704x + 512.\]
    It determines an isogeny class of abelian threefolds over $\F_{8}$ with commutative endomorphism algebra $E=\Q[\pi]=\Q[x]/h$.
    The isogeny class has LMFDB label \href{http://www.lmfdb.org/Variety/Abelian/Fq/3/8/l_ci_ia}{3.8.l\_ci\_ia} and $p$-rank $1$.
    Any $X \in \mathcal A_\pi$ is isogenous to a product of a supersingular elliptic curve and an almost-ordinary abelian surface.    
    
    The unique singular maximal ideal of the order order $R=\Z[\pi,8/\pi]$ is the maximal ideal $(2,\pi,8/\pi)$, which is in $\mathcal{P}_{R_2}^{(0,1)}$. 
    One computes that $R$ has $16$ overorders $S$ and for each of these $w(S)=1$.
    It follows from Proposition~\ref{prop:nwdh} that $n(S)=d(S) h(S)$.
    If $d(S)>0$ for an overorder $S$, then we add in the subscript of the label, the $a$-numbers of the isomorphism classes of Dieudonn{\'e} modules, using the exponent to denote the number of isomorphism classes of Dieudonn\'e modules with the indicated $a$-number. 
    The unique endomorphism ring with index $2$ in the maximal order is the endomorphism ring of $7$ isomorphism classes of abelian varieties, all with pairwise non-isomorphic Dieudonn\'e modules. Six of these isomorphism classes of abelian varieties have $a$-number $1$ while the last one has $a$-number $2$.
    The only overorder of $R$ which is a direct product of two orders is the maximal order. 
    We highlight this in the graph with a $*$ in its label.
    This means that the only abelian varieties that are isomorphic to a product of an elliptic curve and an abelian surface are the ones with maximal endomorphism ring.

    \begin{tikzcd}
        &&&& (0,2) \arrow[rd,"2" description] \arrow[rdd, "2" description]&&\\
        && (0,2) \arrow[rd, "2" description]& (0,1) \arrow[rdd, "2" description]& (0,1) \arrow[rdd, "2" description] \arrow[rd, "2" description]  & (0,2) \arrow[rd, "2" description] &\\
        (0,8) \arrow[r, "2" description] & (0,4) \arrow[ru, "2" description] \arrow[rd, "2" description] \arrow[r, "2" description] & (0,2) \arrow[ru, "2" description] \arrow[r, "2" description] \arrow[rd, "2" description] & (0,2) \arrow[rd, "2" description] \arrow[rdd, "2"] \arrow[ru, "2" description] \arrow[ruu,"2" description] && (0,1) \arrow[r,"2" description] & \mathbf{(1,1,*)}_{[2]} \\
        && (0,4) \arrow[ru,"2" description]& (0,1) \arrow[r,"2"']& \mathbf{(4,1)}_{[1^4]} \arrow[r,"2" description]& \mathbf{(7,1)}_{[1^6,2]} \arrow[ru,"2" description] &\\
        &&&& (0,2) \arrow[ru,"2" description] \arrow[ruuu,"2" description] &
    \end{tikzcd}
\end{example}

\begin{observation} \label{obs}
We have computed the isomorphism classes of abelian varieties for several thousands of isogeny classes (using lists of isogeny classes found in the LMFDB~\cite{lmfdb},  which was compiled following  \cite{DupuyKedlayaRoeVincent2020}) with commutative endomorphism algebra $E$ of dimension $g$ and $p$-rank $<g-1$ over non-prime finite fields.
We observed the following:
\begin{itemize}
    \item the order $R$ is never an endomorphism ring;
    \item there is always an endomorphism ring which is not maximal at the local-local part, that is, condition \ref{Endmax01} does not hold. Note that if this observation would always hold then the first implication in Proposition~\ref{prop:examples_implications} is actually an equivalence.
\end{itemize}
\end{observation}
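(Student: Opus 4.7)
My plan is to treat these two empirical patterns as conjectures and propose structural reasons they should hold whenever the isogeny class has $p$-rank less than $g-1$ and $q=p^a$ with $a>1$. Both observations concern only the local-local component $W'_R\{F',V'\}$-ideals, so Proposition~\ref{prop-01} (giving a unique singular prime $\mathfrak{p}=(p,\pi,q/\pi)$ with residue field $\mathbb{F}_p$) and Corollary~\ref{cor:min_end_ell_01} (showing the \'etale and multiplicative parts of the endomorphism ring can be prescribed independently) together reduce both statements to a single question: for which orders $T' \subseteq \mathcal{O}_{E'_p}$ does there exist a $W'_R\{F',V'\}$-ideal~$M'$ with $\Delta'^{-1}(\{x : xM'\subseteq M'\}) = T'$? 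Call the set of such $T'$ the \emph{admissible orders}. Observation~1 becomes: $R'_p$ is never admissible under the stated hypotheses. Observation~2 becomes: $\mathcal{O}_{E'_p}$ is not the only admissible order.

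For Observation~1, the strategy is to show the $W$-semilinear action of $F$ forces elements outside $\Delta(R'_p)$ into the multiplicator ring. Concretely: suppose $(M':M')_{W'_R} = R'_p$ and pick a single place $\nu$ with $s(\nu) \in (0,1)$. Working in the component $A_\nu \simeq \bigoplus_{i=1}^{g_\nu} LE_\nu$ of~$A$, the ideal $M'_\nu$ is a direct sum of fractional $W_R^\nu$-lattices. Stability under $F_\nu$ (which permutes components cyclically and multiplies by a Frobenius twist in the last slot) combined with $V_\nu$ implies a system of inclusions of the form $\alpha_i \cdot M'_{\nu,i+1}^{\sigma} \subseteq M'_{\nu,i}$ linking the components. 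Expanding $V_\nu F_\nu = F_\nu V_\nu = p$ and iterating $a$ times produces a ratio $M'_{\nu,1} \cdot (\text{elements of } \mathcal{O}_{LE_\nu}) \subseteq M'_{\nu,1}$ whose norm-theoretic interpretation (via Lemma~\ref{lem:norm_sigma}) lands a canonical element of $\mathcal{O}_{E_\nu}$ into $(M'_\nu : M'_\nu)$. Tracking that this element does not lie in $R'_{p,\nu}$ — precisely because $\mathfrak{p}$ is singular and $R'_p \subsetneq \mathcal{O}_{E'_p}$ — forces the contradiction.

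For Observation~2, the cleanest route is constructive. Start from the Waterhouse classification (Proposition~\ref{prop:FVstable_OO}) of $W'_R\{F',V'\}$-ideals with multiplicator ring $\mathcal{O}_{A'}$, and perturb: given a representative $(t_\nu^{\varepsilon_1},\ldots,t_\nu^{\varepsilon_{g_\nu}})$, replace one component~$t_\nu^{\varepsilon_i} \mathcal{O}_{LE_\nu}$ by a proper $W^{\nu}$-sublattice chosen to be stable under $F_\nu$ and $V_\nu$ (which, because $F_\nu$ sends the $i$-th component to the $(i+1)$-st after twisting, only requires simultaneously shrinking a compatible family of components across the $g_\nu$-tuple). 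The resulting lattice has a strictly smaller $W_R^\nu$-multiplicator, and by Lemma~\ref{lem-homs} this translates into a non-maximal $T'$. One then verifies admissibility by feeding the output back through Proposition~\ref{prop:extension_are_end}. The main obstacle in both parts is controlling how the semilinear Frobenius intertwines the $g_\nu$ components of $A_\nu$: the combinatorics of which sub-$W^\nu$-lattices remain $F_\nu,V_\nu$-stable is exactly what Algorithm~\ref{alg:FVclasses} computes but is not, to my knowledge, captured by a closed-form invariant, which is why the authors present these patterns as observations rather than theorems. A uniform proof of either statement almost certainly requires a finer classification of $W'_R\{F',V'\}$-ideals in terms of invariants of the singular prime $\mathfrak{p}$ — the analogue, in mixed characteristic, of the Kaplansky-style invariants used for Bass orders.
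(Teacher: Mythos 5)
The statement you are trying to prove is not a theorem in the paper: it is an \emph{Observation}, and the paper's only justification for it is computational. The authors run Algorithms~\ref{alg:ell_tate_modules}--\ref{alg:isom_classes} (via Algorithms~\ref{alg:W'Rclasses}, \ref{alg:Deltaclasses_OO} and \ref{alg:FVclasses} for the local-local part) on several thousand isogeny classes taken from the LMFDB and simply record that in every computed case $R$ never occurs as an endomorphism ring and some endomorphism ring fails to be maximal at the local-local part. They explicitly flag that they do \emph{not} know whether these patterns hold in general --- indeed they remark that if the second pattern always held, the first implication of Proposition~\ref{prop:examples_implications} would become an equivalence, which is left open. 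So a correct account of ``the paper's proof'' here is a description of the computation, not a structural argument; your proposal instead attempts to upgrade the observation to a theorem, which is a genuinely different (and much more ambitious) route.

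As a proof sketch, your argument has concrete gaps at exactly the points where the difficulty lies. For the first bullet, the iteration of $F_\nu$ and $V_\nu$ around the $g_\nu$ components, interpreted through Lemma~\ref{lem:norm_sigma}, produces the element $\alpha_\nu\cdot\alpha_\nu^{\sigma}\cdots\alpha_\nu^{\sigma^{a-1}}=\Delta(\pi_\nu)$ (and its Verschiebung counterpart $\Delta(q/\pi_\nu)$), both of which already lie in $\Delta(R'_p)$ --- so the ``canonical element'' your argument lands in $(M'_\nu:M'_\nu)$ gives no contradiction; any partial product $\alpha_\nu\cdots\alpha_\nu^{\sigma^{k}}$ with $k<a-1$ is not $\sigma$-invariant and hence does not lie in $\Delta(E_\nu)$ at all, so it says nothing about the $\Delta$-multiplicator ring. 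For the second bullet, the ``perturbation'' of a Waterhouse representative $(t_\nu^{\varepsilon_1},\ldots,t_\nu^{\varepsilon_{g_\nu}})$ is not shown to remain stable under $F_\nu$ and $V_\nu$, nor to have multiplicator ring strictly smaller than $\OO_{A'}$ after pulling back along $\Delta'$; deciding which sub-$W$-lattices survive these two constraints is precisely the case-by-case content of Algorithm~\ref{alg:FVclasses}, and (as you yourself note) no closed-form criterion is available. Your reduction of both bullets to admissibility of multiplicator rings of $W'_R\{F',V'\}$-ideals is sound (for the second bullet one does not even need the invertibility hypothesis of Proposition~\ref{prop:extension_are_end}, only the pairing with a suitable global ideal $I$ as in its proof), but the two local statements themselves remain unproven, which is exactly why the paper states them as empirical observations rather than theorems.
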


\begin{remark} Over a prime field, every isogeny class contains an abelian variety with $R$ as endomorphism ring, see~\cite{CentelegheStix15}. Moreover, over any finite field, any isogeny class of ordinary abelian varieties contains an abelian variety with $R$ as endomorphism ring, see~\cite{Del69}. 

Contrary to this, we showed in Example~\ref{ex:almost_ord} that \emph{no} almost-ordinary abelian variety over a non-prime finite field has $R$ as endomorphism ring. 
\end{remark}

Recall that Proposition~\ref{prop-01} says that if $\mathcal{A}_\pi$ is an isogeny class over $\F_q$ with commutative endomorphism algebra which is non-ordinary and such that $q=p^a$ is not a prime, then the order $R=\Z[\pi,q/\pi]$ is not maximal at the maximal ideal $\frp=(p,\pi,q/\pi)$.
The converse does not hold, as we show in the following example.

\begin{example}\label{ex:end_max_p} 
    Let $\mathcal{A}_\pi$ be an isogeny class of simple abelian surfaces over $\F_2$ with $p$-rank $0$ determined by the characteristic polynomial $h(x) = x^8 - 2x^5 - 4x^3 + 16$; see the LMFDB-label \href{http://www.lmfdb.org/Variety/Abelian/Fq/4/2/a_a_ac_a}{4.2.a\_a\_ac\_a}.
    Let $E=\Q[\pi]=\Q[x]/h$ be the endomorphism algebra of the isogeny class.
    The order $R=\Z[\pi,2/\pi]$ has a unique maximal ideal above $p=2$, which is $\frp = (2,\pi,2/\pi)$. 
    One computes that the quotient $\frp/\frp^2$ has $8$ elements, while $R/\frp\simeq \F_2$.
    This shows that $\frp$ is singular, that is, the order $R_\frp$ is not maximal.
\end{example}

\section*{Acknowledgements}
The authors are grateful to Sergey Rybakov, Tommaso Giorgio Centeleghe and Jacob Stix for pointing out two mistakes in previous versions of the paper.
The authors are thankful to Caleb Springer for suggestions to improve the exposition in Section~\ref{sec:examples}.
The authors thank the anonymous referee for valuable feedback on how to improve the overall readability of the paper.

The second author was partially supported by the Dutch Research Council (NWO), through grants VI.Veni.192.038 and VI.Vidi.223.028. 

The third author was supported by NWO through grant VI.Veni.202.107, by Agence Nationale de la Recherche under the MELODIA project (grant number ANR-20-CE40-0013) and by Marie  Sk{\l}odowska-Curie Actions - Postdoctoral Fellowships 2023 (project 101149209 - AbVarFq).

\bibliographystyle{amsplain}
\bibliography{references}

\end{document}